\numberwithin{equation}{section}
\newtheorem{thm}{Theorem}[section]
\newtheorem{cor}[thm]{Corollary}
\newtheorem{lem}[thm]{Lemma}
\theoremstyle{definition}
\newtheorem{defi}{Definition}[section]
\newtheorem{example}{Example}[section]
\newcommand{\la}{\lambda}
\title{Difference operators for partitions and some applications}
\date{October 24, 2017}
\author[Guo-Niu Han]{Guo-Niu Han}
\author[Huan Xiong]{Huan Xiong$^*$}
\address{Universit\'e de Strasbourg, CNRS, IRMA UMR 7501, F-67000 Strasbourg, France}
\email{guoniu.han@unistra.fr, \quad xiong@math.unistra.fr}
\subjclass[2010]{05A15, 05A17, 05A19, 05E05, 05E10, 11P81}
\keywords{partition, hook length, content, standard Young tableau, difference operator}
\thanks{$^*$ Huan Xiong is the corresponding author.}
\begin{document}
\begin{abstract} 
Motivated by the Nekrasov-Okounkov formula on hook lengths, the first author conjectured that the Plancherel average of the $2k$-th power sum of hook lengths of partitions with size $n$ is always a polynomial of $n$ for any $k\in \mathbb{N}$. This conjecture was generalized and proved by Stanley (Ramanujan J., 23\,(1--3)\,:\,91--105, 2010). In this paper, inspired by the work of Stanley and Olshanski on the differential poset of Young lattice, we study the properties of two kinds of difference operators $D$ and $D^-$ defined on  functions of partitions. Even though the calculations for higher orders of $D$ are extremely complex, we prove that several well-known families of functions of partitions are annihilated by a power of the difference operator $D$. As an application, our results lead to several generalizations of classic results on partitions, including the marked hook formula, Stanley Theorem, Okada-Panova hook length formula, and Fujii-Kanno-Moriyama-Okada content formula. We insist that the Okada constants $K_r$ arise directly from the computation for a single partition $\lambda$, without the summation ranging over all partitions of size~$n$. 
\end{abstract}
\maketitle

\section{Introduction}\label{sec:introduction} 
The aim of this paper is to develop a formal method to discover new hook length identities of partitions and  generalize classical such identities which occur in Combinatorics, Number Theory, Representation Theory and
Mathematical Physics by difference operator technique, which is motivated by the work of Stanley \cite{stan1} and Olshanski \cite{ols1,ols2,ols3} on the differential poset of Young lattice. Our main results are the Theorems \ref{th:main1} and \ref{th:skewstanley}.

First we recall some basic definitions. We refer the reader to \cite{Macdonald,ec2} for the basic knowledge on partitions and symmetric functions. A \emph{partition} is a finite weakly decreasing sequence of positive integers $\lambda = (\lambda_1, \lambda_2, \ldots, \lambda_\ell)$. Here the integer  $|\lambda|=\sum_{1\leq i\leq \ell}\lambda_i$ is called the \emph{size} of the partition $\lambda$. A partition $\lambda$ is identified with its \emph{Young diagram}, which is a collection of boxes arranged in left-justified rows with $\lambda_i$ boxes in the $i$-th row. The \emph{content} of the box $\square=(i,j)$ in the $i$-th row and $j$-th column of the Young diagram of a partition is defined by $c_{\square}=j-i$ (see \cite{lascoux, ec2}). The \emph{hook length} of the box $\square$ in the Young diagram,  denoted by $h_{\square}$, is the number of boxes exactly to the right, or exactly above, or the box itself (the French convention for the Young diagrams is used in this paper) (see \cite{han, ec2}). For example, the Young diagram and hook lengths of the partition $(6,3,3,2)$ are illustrated in Figure \ref{fig:1}.
A \emph{standard Young tableau} of shape $\la$ is obtained by filling in the boxes of the Young diagram of $\la$ with numbers from $1$ to $|\la|$ such that the numbers strictly increase along every row and every column.
Suppose that $\lambda$ and $\mu$ are two partitions with $\lambda\supseteq \mu$, which means that the Young diagram of $\la$ contains the Young diagram of $\mu$. Denote by $f_\lambda$ (resp. $f_{\lambda/\mu}$) the number of standard Young tableaux of shape $\lambda$ (resp. $\lambda/\mu$).  Let $H_\lambda=\prod_{\square\in\lambda} h_{\square}$ be the product of all hook lengths of boxes in $\lambda$. Set $f_\emptyset=1$ and $H_\emptyset=1$ for the empty partition $\emptyset$. It is well known that (see \cite{tew, hanxiong2, frt, han2,  han4, hanxiong3, rsk, ec2})
\begin{equation}
\label{eq:hookformula}
f_\lambda = \frac{|\lambda| !}{H_\lambda} \qquad
\text{and}\qquad
\frac{1}{n!}\sum_{|\lambda| =n} f_\lambda^2=1.
\end{equation}
Here $\frac{f_\la^2}{|\la|!}$ is called the   {\it Plancherel measure} of the partition $\la$ and 
$$
\frac{1}{n!}\sum\limits_{\mid\lambda\mid= n} f_\lambda^2g(\lambda)
$$ 
is called the \emph{Plancherel average} of the function $g(\la)$ (see \cite{KJ,ols3}).
\begin{figure}
\centering
\begin{center}
\includegraphics[]{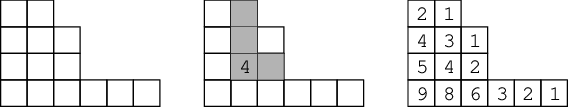}
\end{center}
\caption{The Young diagram of the partition $(6,3,3,2)$ and the hook
lengths of corresponding boxes.} \label{fig:1}
\end{figure}

Nekrasov and Okounkov \cite{no} obtained the following formula for hook lengths
$$
  \sum_{n\geq 0}\left(\sum_{|\lambda|= n}f_\lambda^2
   \prod_{\square\in\lambda} (t+h_{\square}^2)\right)\frac{x^n}{n!^2} =
   \prod_{i\geq 1}(1-x^i)^{-1-t},
$$
which was generalized and given a more elementary proof by the first author \cite{han2}. Motivated by the above formula, the first author conjectured that the Plancherel average of the power sum of hook lengths 
$$
\frac{1}{n!}\sum_{|\lambda|= n} f_\lambda^2\sum_{\square\in\lambda}h_{\square}^{2k}
$$ 
is always a polynomial of $n$ for any given positive integer $k$, which was generalized and proved by Stanley \cite{stan}.

\begin{thm}[Stanley]\label{th:hanstanley}
Let $F=F(z_1,z_2,\ldots)$ be a symmetric function of infinite variables. Then the Plancherel average 
\begin{equation}\label{eq:hanstanley}
\frac{1}{n!}\sum_{|\lambda|= n}
f_\lambda^2\, F(h_{\square}^2: {\square}\in\lambda)
\end{equation}
is a polynomial of $n$, where $F(h_{\square}^2: {\square}\in\lambda)$ means that $n$ of the variables $z_1, z_2, \ldots$ are substituted by $h_\square^2$ for $\square\in \lambda$, and all other variables by $0$.
\end{thm}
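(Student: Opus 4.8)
The plan is to read \eqref{eq:hanstanley} as a Plancherel expectation and to reduce polynomiality in $n$ to a degree-lowering property of a single difference operator. Write $\Phi_n(g):=\frac1{n!}\sum_{|\lambda|=n}f_\lambda^2\,g(\lambda)$ for the average of a function $g$ of partitions, so that our target is $a_n:=\Phi_n(g)$ with $g(\lambda)=F(h_\square^2:\square\in\lambda)$. Using $f_\lambda=|\lambda|!/H_\lambda$, the branching rule $f_\mu=\sum_{\lambda=\mu-\square}f_\lambda$ gives, for $|\lambda|=n$, the weights $f_\mu/\big((n+1)f_\lambda\big)=H_\lambda/H_\mu$, and the companion identity $\sum_{\mu=\lambda+\square}f_\mu=(n+1)f_\lambda$ (a consequence of the commutation $[D,U]=I$ in Young's lattice) shows these weights sum to $1$ over the boxes $\square$ addable to $\lambda$. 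Substituting $f_\mu^2=f_\mu\sum_{\lambda=\mu-\square}f_\lambda$ into $\Phi_{n+1}$ and reindexing yields $\Phi_{n+1}(g)=\Phi_n(\mathcal U g)$, where the \emph{up-averaging operator} is
\[
(\mathcal U g)(\lambda):=\sum_{\mu=\lambda+\square}\frac{H_\lambda}{H_\mu}\,g(\mu).
\]
Hence the forward difference satisfies $\Delta a_n=\Phi_n(Dg)$ with $D:=\mathcal U-I$, and by iteration $\Delta^k a_n=\Phi_n(D^k g)$. It therefore suffices to prove that $D^{\,d+1}g\equiv0$ identically as a function of partitions for some integer $d=d(F)$: that forces $\Delta^{d+1}a_n=0$ for all $n$, i.e.\ $a_n$ is a polynomial in $n$ of degree at most $d$.

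Second, I would organize the relevant functions into a filtered algebra on which $D$ can be controlled. Since $\mathcal U\mathbf 1=\mathbf 1$ we have $D\mathbf 1=0$, so the constants form the bottom layer. Plugging the hook squares into power sums, I set $\deg\big(\sum_\square h_\square^{2k}\big)=k+1$ and extend multiplicatively and through the power-sum expansion, so that every $g=F(h_\square^2:\square\in\lambda)$ lies in a finite filtration piece $V_d$, with $V_0$ consisting only of constants and $V_{-1}=\{0\}$. The whole argument then rests on the following degree-lowering lemma, which is precisely the role played by the difference operators of this paper:
\[
D(V_d)\subseteq V_{d-1}\qquad\text{for every }d\ge 0.
\]
Granting it, $D^{\,d+1}$ maps $V_d$ into $V_{-1}=\{0\}$, giving the desired vanishing and hence the theorem.

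The heart of the proof, and the step I expect to be the main obstacle, is this lemma, to be established by analysing how hook lengths change when one box is added. If $\mu$ is obtained from $\lambda$ by adjoining the addable corner in row $r$ and column $c$, then exactly the boxes $(r,j)$ with $j<c$ and the boxes $(i,c)$ with $i<r$ have their hook length raised by $1$, the new box has hook length $1$, and all other hooks are unchanged; thus, in the model case $g=\sum_\square h_\square^2$,
\[
g(\mu)-g(\lambda)=1+\sum_{j<c}\big((h_{r,j}+1)^2-h_{r,j}^2\big)+\sum_{i<r}\big((h_{i,c}+1)^2-h_{i,c}^2\big),
\]
a sum of terms $2h+1$ that is linear, hence of strictly lower degree, in the hook lengths of $\lambda$. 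Because $Dg(\lambda)=\sum_{\mu=\lambda+\square}(H_\lambda/H_\mu)\big(g(\mu)-g(\lambda)\big)$, the real task is to show that averaging these corner-wise increments against the weights $H_\lambda/H_\mu$ never raises the degree; concretely one must prove a hook-length summation identity ensuring that the naively top-degree contributions from long arms and legs cancel after weighting. I would prove this by expressing both $H_\lambda/H_\mu$ and the arm/leg sums through the first-column hook lengths (beta-numbers) of $\lambda$, reducing the statement to a rational-function cancellation among these integers.

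Finally I would propagate the single-generator estimate to arbitrary symmetric $F$. The filtration is multiplicative, $V_d\cdot V_e\subseteq V_{d+e}$, and for a product one writes $g(\mu)h(\mu)-g(\lambda)h(\lambda)=g(\mu)\big(h(\mu)-h(\lambda)\big)+h(\lambda)\big(g(\mu)-g(\lambda)\big)$; summing against $H_\lambda/H_\mu$ gives $D(gh)(\lambda)=\sum_{\mu=\lambda+\square}(H_\lambda/H_\mu)\,g(\mu)\big(h(\mu)-h(\lambda)\big)+h(\lambda)\,Dg(\lambda)$, and since each increment $h(\mu)-h(\lambda)$ drops degree by the lemma's model computation while adding a box does not raise the degree of $g(\mu)$, both terms lie in $V_{\deg(gh)-1}$. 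Thus $D(V_d)\subseteq V_{d-1}$ extends from the generators $\sum_\square h_\square^{2k}$ to all of $V_\bullet$, and the theorem follows with an explicit degree bound read off from the filtration level of $F(h_\square^2:\square\in\lambda)$.
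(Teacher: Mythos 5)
Your first half is correct and is, in fact, the paper's own framework in light disguise: since $(\mathcal{U}-I)g(\lambda)=H_\lambda\, D\bigl(g(\lambda)/H_\lambda\bigr)$ in the paper's notation, your up-averaging operator is conjugate to the paper's difference operator $D$; your identity $\Phi_{n+1}(g)=\Phi_n(\mathcal{U}g)$ is the telescoping of Lemma \ref{th:telescope}, your $\Delta^k a_n=\Phi_n(D^kg)$ is Theorem \ref{th:main2} with $\mu=\emptyset$, and your product formula is the Leibniz rule of Lemma \ref{th:leibniz}. So the reduction of polynomiality in $n$ to $D$-nilpotency of $F(h_\square^2:\square\in\lambda)$ is sound and checks out line by line.

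The genuine gap is the degree-lowering lemma $D(V_d)\subseteq V_{d-1}$, which you state but do not prove, and your sketch underestimates it in one essential respect: it is not enough that each increment $g(\lambda^+)-g(\lambda)$ has lower degree in the hooks of $\lambda$, because the weighted sum $\sum_{\lambda^+}(H_\lambda/H_{\lambda^+})\bigl(g(\lambda^+)-g(\lambda)\bigr)$ depends on corner-by-corner data and must be shown to be expressible again as a polynomial in \emph{global} statistics of $\lambda$ — and the family of hook power sums is not visibly closed under $D$. The paper resolves this by changing generators: it parametrizes $\lambda$ by the contents $x_0<y_1<x_1<\cdots<y_m<x_m$ of its inner and outer corners and works in Olshanski's algebra generated by $q_k(\lambda)=\sum_i x_i^k-\sum_j y_j^k$. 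Two nontrivial inputs then carry the proof. First, your anticipated ``rational-function cancellation'' is made precise as
\begin{equation*}
\sum_{0\le i\le m}\frac{H_{\lambda}}{H_{\lambda^{i+}}}\cdot\frac{1}{1-x_iz}
=\frac{\prod_{1\le j\le m}(1-y_jz)}{\prod_{0\le j\le m}(1-x_jz)},
\end{equation*}
proved via the closed form $H_{\lambda^{k+}}/H_\lambda=\prod_{i\neq k}(x_k-x_i)\big/\prod_{j}(x_k-y_j)$ and a too-many-roots argument (Lemma \ref{th:Hlambda+1}); combined with the fact that the increment of $q_k$ at corner $i$ is the discrete second difference $(x_i+1)^k+(x_i-1)^k-2x_i^k$ (Lemma \ref{th:qk}), this gives closure with the $q$-degree dropping by two (Theorem \ref{th:dqnu}). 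Second — and entirely absent from your plan — one must show that the hook functions lie in this $q$-algebra at all: Theorem \ref{th:Ssym} writes $S(\lambda,r)$ as a $q$-polynomial by cutting $\lambda$ into rectangular blocks indexed by pairs of corners, double-summing via Euler--Maclaurin and Bernoulli polynomials, and verifying that the resulting polynomial $G$ is even so that only even-power contributions survive and expand in the $q_\nu$. Without arguments of this kind your filtration claim — which is the entire content of the theorem — remains unestablished; supplying them turns your outline into precisely the paper's Sections 6 and 7.
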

The polynomiality of \eqref{eq:hanstanley} suggested Okada to conjecture an explicit formula~\cite{stan}, which was proved by Panova \cite{panova}. Let
\begin{equation*}\label{def:S}
S(\lambda,r):=\sum_{\square\in\lambda}\prod_{1\leq j\leq
r}(h_{\square}^2-j^2)
\end{equation*}
and
\begin{equation*}
    K_r:=\frac{(2r)!(2r+1)!}{r! (r+1)!^2}.
\end{equation*}
The sequence $(K_0=1,\, K_1=3,\, K_2=40,\, K_3=1050,\, \ldots)$ appears as $A204515$ in the \emph{On-Line Encyclopedia of Integer Sequences} \cite{OEIS:A204515}.
\begin{thm}[Okada-Panova \cite{panova}]\label{th:okadapanova}
For each positive integer $n$ we have
\begin{equation}\label{eq:okadapanova}
n!  \sum_{|\lambda|=n}\frac{S(\lambda, r)}{H_\lambda^2} = K_r
\binom{n}{r+1}.
\end{equation}
\end{thm}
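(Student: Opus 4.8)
The plan is to first rewrite the left-hand side as a Plancherel-type average so that Theorem~\ref{th:hanstanley} applies. Using the hook length formula $f_\lambda=|\lambda|!/H_\lambda$ from \eqref{eq:hookformula}, for $|\lambda|=n$ we have $1/H_\lambda^2=f_\lambda^2/(n!)^2$, so
\begin{equation*}
n!\sum_{|\lambda|=n}\frac{S(\lambda,r)}{H_\lambda^2}
=\frac{1}{n!}\sum_{|\lambda|=n}f_\lambda^2\,S(\lambda,r).
\end{equation*}
Since $S(\lambda,r)=\sum_{\square\in\lambda}\prod_{1\le j\le r}(h_\square^2-j^2)$ is the specialization of the symmetric function $F(z_1,z_2,\ldots)=\sum_i\prod_{1\le j\le r}(z_i-j^2)$ at the hook squares, Theorem~\ref{th:hanstanley} shows this expression is a polynomial in $n$; call it $P(n)$. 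It then remains to identify $P(n)=K_r\binom{n}{r+1}$.

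The next step is to locate the roots of $P$ and then evaluate one further point. Factoring $\prod_{1\le j\le r}(h_\square^2-j^2)=\prod_{1\le j\le r}(h_\square-j)(h_\square+j)$ and using $h_\square\ge 1$, the product vanishes precisely when $h_\square\le r$. For any partition of size $n$ the largest hook length is $\lambda_1+\ell(\lambda)-1\le|\lambda|=n$, so if $n\le r$ then every box satisfies $h_\square\le r$, whence $S(\lambda,r)=0$ identically; thus $P(0)=P(1)=\cdots=P(r)=0$ and $P(n)$ is divisible by $n(n-1)\cdots(n-r)=(r+1)!\binom{n}{r+1}$. Granting for the moment that $\deg P\le r+1$ (discussed below), we conclude $P(n)=c\binom{n}{r+1}$ for a constant $c=P(r+1)$. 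At $n=r+1$ a box contributes to $S$ only if $h_\square\ge r+1$, forcing $h_\square=n$, which occurs exactly at the corner of a hook partition $\lambda=(a,1^{b})$ with $a+b=r+1$. For each such hook the single surviving factor is
\begin{equation*}
\prod_{1\le j\le r}\bigl((r+1)^2-j^2\bigr)=\Bigl(\prod_{1\le j\le r}(r+1-j)\Bigr)\Bigl(\prod_{1\le j\le r}(r+1+j)\Bigr)=r!\cdot\frac{(2r+1)!}{(r+1)!},
\end{equation*}
while $f_{(a,1^{b})}=\binom{r}{b}$ and $\sum_{b=0}^{r}\binom{r}{b}^2=\binom{2r}{r}$. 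Assembling these,
\begin{equation*}
c=P(r+1)=\frac{1}{(r+1)!}\binom{2r}{r}\,r!\,\frac{(2r+1)!}{(r+1)!}=\frac{(2r)!\,(2r+1)!}{r!\,(r+1)!^2}=K_r,
\end{equation*}
which gives the claimed identity.

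The main obstacle is the degree bound $\deg P\le r+1$: Theorem~\ref{th:hanstanley} supplies polynomiality but no control on the degree, and the evaluation at $n=r+1$ pins down the constant only once we know that $P/\binom{n}{r+1}$ is itself constant. Expanding $S(\lambda,r)=\sum_{i=0}^{r}(-1)^{r-i}e_{r-i}(1^2,\ldots,r^2)\sum_{\square}h_\square^{2i}$ reduces the task to showing that each hook power-sum average $T_i(n)=\frac{1}{n!}\sum_{|\lambda|=n}f_\lambda^2\sum_\square h_\square^{2i}$ is a polynomial of degree exactly $i+1$; the cases $T_0(n)=n$ and $T_1(n)=n(3n-1)/2$ confirm this pattern. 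To establish the degree estimate I would introduce a difference operator $D$ on functions of partitions that relates the size-$n$ Plancherel average to the size-$(n-1)$ one while lowering the hook-degree by one, so that iterating $D$ annihilates $P$ after $r+2$ steps and thereby forces $\deg P\le r+1$. Proving that $D$ acts with exactly this degree drop is the crux of the argument, and it is precisely the kind of machinery the present paper sets out to develop.
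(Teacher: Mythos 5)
Your reduction to the Plancherel average, the location of the roots $P(0)=\cdots=P(r)=0$, and the evaluation $P(r+1)=K_r$ are all correct; in fact the $n=r+1$ computation (hooks $(k+1,1^{r-k})$, $f_{\lambda^{(k)}}=\binom{r}{k}$, $\sum_k\binom{r}{k}^2=\binom{2r}{r}$, corner factor $r!\,(2r+1)!/(r+1)!$) is literally the same calculation the paper performs to determine the constant $a$ in the proof of Lemma~\ref{th:DS}. But there is a genuine gap, which you yourself flag: Theorem~\ref{th:hanstanley} gives polynomiality with no degree control, so your argument only yields $P(n)=\binom{n}{r+1}Q(n)$ for some polynomial $Q$ with $Q(r+1)=K_r$, and nothing forces $Q$ to be constant. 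The bound $\deg P\le r+1$ is not a technical afterthought but the entire hard content of the theorem; in the paper's language it is the assertion $D^{r+2}\bigl(S(\lambda,r)/H_\lambda\bigr)=0$, i.e.\ \eqref{eq:DS2}, and proving it consumes the machinery of Sections~\ref{sec:Dpoly} and~\ref{sec:main1}: Theorem~\ref{th:Ssym} expresses $S(\lambda,r)$ as a polynomial of degree at most $2r+2$ in Olshanski's corner functions $q_\nu(\lambda)$ (via the Euler--MacLaurin/Bernoulli analysis of $G(z)$), and Theorem~\ref{th:dqnu} shows that each application of $D$ lowers the $q$-degree by at least $2$, which together with $q_0(\lambda)=1$, $q_1(\lambda)=0$, $q_2(\lambda)=2|\lambda|$ from \eqref{eq:q012} gives $H_\lambda D^{r}\bigl(S(\lambda,r)/H_\lambda\bigr)=a|\lambda|+b$ and hence the degree bound. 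Your proposed reduction to ``$T_i(n)$ has degree exactly $i+1$'' does not close the gap --- it merely renames it, since that degree estimate is equivalent in difficulty and is not supplied by Han--Stanley's proof.

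It is worth noting that, modulo this missing bound, your route and the paper's are the same argument in two presentations. The paper never invokes Theorem~\ref{th:hanstanley}: it applies Theorem~\ref{th:main2} with $\mu=\emptyset$ and $g(\lambda)=S(\lambda,r)/H_\lambda$ to get $\sum_{|\lambda|=n}f_\lambda S(\lambda,r)/H_\lambda=\sum_{k\ge 0}\binom{n}{k}D^k g(\emptyset)$; the terms with $k\le r$ vanish by \eqref{eq:main2b} because $S(\lambda,r)=0$ whenever $|\lambda|\le r$ (your roots observation, read through the inversion formula), the $k=r+1$ term contributes $K_r\binom{n}{r+1}$ by \eqref{eq:DS1}, and all higher terms vanish by \eqref{eq:DS2}. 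Your Lagrange-interpolation framing (roots at $0,\dots,r$ plus one evaluation) and the paper's binomial expansion are related precisely by the M\"obius inversion \eqref{eq:main2b}, so the only substantive difference between your proposal and the paper is the piece you deferred --- and that piece is the theorem's crux, not a routine verification.
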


In this paper, we study two kinds of difference operators $D$
and $D^-$ defined on functions of partitions, motivated by the work of Stanley \cite{stan1} and Olshanski \cite{ols1,ols2,ols3} on differential poset of Young lattice. As applications, we will generalize Stanley Theorem, Okada-Panova hook length formula and obtain other more general results by studying the difference operator $D$ on each single summand $F(h_{\square}^2: {\square}\in\lambda)$. As will be seen in Corollary \ref{th:DS} the constants $K_r$ arise directly from the computation for a single partition $\lambda$, without the summation ranging over all partitions of size~$n$. 

The differential poset of Young lattice was first introduced in $1988$ by Stanley \cite{stan1}. In his paper, Stanley studied the following two operators for partitions:
\begin{equation*}
    T_1(\lambda):=\sum_{\lambda^{+}}\lambda^{+}
\text{\quad and\quad}
T_2(\lambda):=\sum_{\lambda^-}\lambda^-,
\end{equation*}
where $\lambda^{+}$ (resp. $\lambda^-$) ranges over all partitions
obtained by adding (resp. removing) a box to (resp. from) $\lambda$.
Many remarkable results on Young lattice and partitions were obtained by this technique \cite{ols1,ols2,ols3,petrov,stan1}. For example, Olshanski \cite{ols3} gave a proof of the content case of Stanley Theorem by replacing $\la$ by certain functions  $\frac{g(\la)}{H_\la}$ related to contents of partitions in the definition of $T_1$ and $T_2$. In this paper, we will give a systematic application of Stanley and Olshanski's ideas, which demonstrates the power of the difference operator approach.

\begin{defi} 
Let $g(\lambda)$ be a function defined on partitions. The {\it difference operators} $D$ and $D^-$  are defined by
\begin{align*}
    Dg(\lambda)&=\sum_{\lambda^{+}}g(\lambda^{+})-g(\lambda)\\
\noalign{{\noindent and}}
D^-g(\lambda)&=| \lambda |\, g(\lambda)- \sum_{\lambda^-}g(\lambda^-),
\end{align*}
where $\lambda^{+}$ (resp. $\lambda^-$) ranges over all partitions obtained by adding (resp. removing) a box to (resp. from) $\lambda$. Higher-order difference operators for $D$ are defined by induction $D^0g:=g$ and $D^k g:=D(D^{k-1} g)$ ($k\geq 1$). Also, we write $D g(\mu) := D g(\lambda) |_{\lambda=\mu}$ for a fixed partition $\mu$.
\end{defi}
We will show in Lemma \ref{th:DD-} that the two difference operators $D$ and $D^-$ satisfy the simple noncommutative law $DD^--D^-D=D.$

The functions of partitions which are annihilated by a power of the difference operator $D$ is crucial in our study.

\begin{defi}
A function $g(\lambda)$ of partitions is called a {\it $D$-polynomial on partitions}, if there exists a nonnegative integer $r$ such that $D^{r+1} \bigl(g(\lambda)/H_\lambda\bigr)=0$ for every partition $\lambda$. The minimal $r$ satisfying this condition is called the {\it degree} of~$g(\lambda)$.
\end{defi}

In this paper, we will show that several types of functions of partitions, such as the power sums of hook lengths and contents, are $D$-polynomials.

Our two main theorems are stated next.

\begin{thm}\label{th:main1}
For each power sum symmetric function $p_\nu(z_1, z_2, \ldots)$ of infinite variables indexed by the partition $\nu=(\nu_1, \nu_2, \ldots, \nu_\ell)$,
the function $p_\nu(h_{\square}^2: {\square}\in\lambda)$ of partition $\lambda$ is a $D$-polynomial with degree at most $|\nu|+\ell$.
\end{thm}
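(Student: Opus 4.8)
The plan is to first factorize the specialization and then run a filtration argument. Since $p_\nu=p_{\nu_1}\cdots p_{\nu_\ell}$, we have $p_\nu(h_\square^2:\square\in\lambda)=\prod_{i=1}^{\ell}\Phi_{\nu_i}(\lambda)$ with $\Phi_m(\lambda):=\sum_{\square\in\lambda}h_\square^{2m}$. It will be convenient to replace the power sums $\Phi_m$ by the functions $S_r(\lambda):=S(\lambda,r)=\sum_{\square\in\lambda}\prod_{1\le j\le r}(h_\square^2-j^2)$ of \eqref{def:S}: since $S_r=\Phi_r+(\text{lower})$, the families $\{\Phi_m\}$ and $\{S_r\}$ are related by a grading-preserving unitriangular change of basis. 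I would therefore work in the filtered algebra $\mathcal A$ of functions of $\lambda$ in which $\mathcal A_{\le d}$ is spanned by the products $S_{r_1}\cdots S_{r_k}$ with $\sum_i(r_i+1)\le d$ (the empty product being the constant $1$). In this grading $\deg S_r=r+1$, so $p_\nu(h_\square^2:\square\in\lambda)\in\mathcal A_{\le|\nu|+\ell}$, and the theorem reduces to showing that the operator induced by $D$ lowers the filtration by at least one: applying it $|\nu|+\ell+1$ times then lands in $\mathcal A_{\le-1}=\{0\}$.

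To make this precise I would transfer $D$ from $g/H_\lambda$ to the numerator $g$. For any $F\in\mathcal A$,
\[
D\!\left(\frac{F}{H_\lambda}\right)=\sum_{\lambda^{+}}\frac{F(\lambda^{+})}{H_{\lambda^{+}}}-\frac{F(\lambda)}{H_\lambda}=\frac{1}{H_\lambda}\bigl(\mathbb{E}F-F\bigr),\qquad \mathbb{E}F:=\sum_{\lambda^{+}}w_{\lambda^{+}}F(\lambda^{+}),\ \ w_{\lambda^{+}}:=\frac{H_\lambda}{H_{\lambda^{+}}}.
\]
The weights sum to $1$ --- this is exactly $D(1/H_\lambda)=0$, which I would deduce from $H_\lambda=|\lambda|!/f_\lambda$ together with the branching identity $\sum_{\lambda^{+}}f_{\lambda^{+}}=(|\lambda|+1)f_\lambda$. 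Hence $\mathbb E$ is an averaging operator, $\Delta:=\mathbb E-\mathrm{id}$ is a discrete derivative with $\Delta 1=0$, and the goal becomes $\Delta^{|\nu|+\ell+1}\bigl(p_\nu(h_\square^2:\square\in\lambda)\bigr)=0$. Everything now rests on the claim $\Delta(\mathcal A_{\le d})\subseteq\mathcal A_{\le d-1}$.

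The analysis of $\Delta$ uses the explicit change of hook lengths under adding a box: encoding the boundary of $\lambda$ as a bi-infinite $0/1$ word, an addable box is an adjacent filled-then-empty pair at positions $(p,p+1)$, and after the swap the new box has hook length $1$ while every hook length in its arm or leg increases by $1$, all others being unchanged. This reduces $\Delta$ on the generators to the single-partition computation promised in Lemma \ref{th:DS}: I expect it to give $\Delta S_r$ as a combination of $S_{r'}$ with $r'<r$ (the constant $K_r$ of \eqref{def:K} appearing as its bottom coefficient), so that $\Delta S_r\in\mathcal A_{\le r}$. The passage to arbitrary products is then handled by the Leibniz-type identity
\[
\Delta(FG)=F\,\Delta G+G\,\Delta F+\sum_{\lambda^{+}}w_{\lambda^{+}}\,\delta_{\lambda^{+}}F\,\delta_{\lambda^{+}}G,\qquad \delta_{\lambda^{+}}F:=F(\lambda^{+})-F(\lambda),
\]
in which the first two terms drop one unit of degree by the generator case and the ``covariance'' remainder, being a product of two single-corner differences, should drop two; an induction on the number of factors then yields $\Delta(\mathcal A_{\le d})\subseteq\mathcal A_{\le d-1}$ and closes the argument.

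The step I expect to be the main obstacle is exactly this degree control, i.e.\ establishing Lemma \ref{th:DS} and the analogous bound for the covariance remainder. The difficulty is that a single-corner difference such as $\delta_{\lambda^{+}}\Phi_m=1+\sum_h\bigl((h+1)^{2m}-h^{2m}\bigr)$ (sum over the arm-and-leg hook lengths of the corner) involves odd powers of hook lengths, so it is a priori not even a function of the squared hook lengths of $\lambda$; these odd powers, and the would-be top term $\sum_{\lambda^{+}}w_{\lambda^{+}}\sum_h 2m\,h^{2m-1}$ at the forbidden half-integer degree, must cancel after summing against the weights. To force both cancellations simultaneously I would package the generators into the generating series $\sum_{m\ge1}z^{2m}\Phi_m(\lambda)=\sum_{\square}\tfrac{z^2h_\square^2}{1-z^2h_\square^2}$, express $\mathbb E$ of it as a rational function of $z$ whose poles lie at the contents of the addable and removable corners of $\lambda$, and use the symmetry $z\mapsto-z$ together with a residue-and-degree count to prove simultaneously that odd powers drop out and that one unit of degree is lost. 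Granting this, the filtration argument above completes the proof.
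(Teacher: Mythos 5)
Your skeleton is genuinely the right one, and it matches the paper's: transferring $D$ to the numerator via the averaging operator $\mathbb E$ (the paper's Lemmas \ref{th:D+} and \ref{th:leibniz1}), the Leibniz rule with the covariance remainder (Lemma \ref{th:leibniz}), the triangular reduction of $p_\nu$ to products of the $S_r$ with total degree $\sum_i(r_i+1)\le|\nu|+\ell$, and a filtration argument in which $\Delta$ drops degree by one. But there is a genuine gap exactly where you flag it, and it is not a routine verification: your ambient space $\mathcal A$, the span of products of $S_r$'s, is almost certainly the wrong one, because it is not visibly closed under $\Delta$. Concretely, for the single-corner differences one computes (via the paper's Lemma \ref{th:Hlambda+}, with $x_i,y_j$ the corner contents and $q_k$ as in \eqref{def:qk})
\begin{equation*}
\delta_{\lambda^{i+}}\Bigl(\sum_{\square\in\lambda}h_\square^4\Bigr)=x_i^4+6q_2(\lambda)\,x_i^2-4q_3(\lambda)\,x_i+q_4(\lambda)+1,
\end{equation*}
so the \emph{odd} content power sum $q_3$ genuinely appears; in the covariance term $\sum_{\lambda^{i+}}w_{\lambda^{i+}}\,\delta\Phi_2\,\delta\Phi_2$ this produces, after taking weighted moments $\sum_i w_{\lambda^{i+}}x_i^k$, contributions like $q_3^2q_2$ and $q_3q_5$, and whether such combinations can be rewritten as polynomials in the $S_r$'s is itself a nontrivial claim that your proposal neither states nor proves. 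Your expected generator identity is also false as stated: already $\Delta S_2=15S_1+\tfrac52 S_0-\tfrac52 S_0^2$ (with $S_0=|\lambda|$), which is not a linear combination of the $S_{r'}$, and $K_2=40$ does not appear as a coefficient there --- in the paper $K_r$ surfaces only after $r$ iterations, as $H_\lambda D^r(S(\lambda,r)/H_\lambda)=K_r|\lambda|$, and Lemma \ref{th:DS} is a \emph{consequence} of the main machinery (obtained by evaluating at $\lambda=\emptyset$ and $\lambda=(1)$ via \eqref{eq:main2b}), not an input to it, so leaning on it inside the generator case risks circularity unless you prove it independently.

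The paper's resolution of precisely this difficulty is to enlarge the algebra: it works with all the functions $q_k(\lambda)$, odd $k$ included, filtered by $\deg q_\nu=|\nu|$. Two facts then replace your unproven closure claim. First, Theorem \ref{th:Ssym} shows $S(\lambda,r)=\sum_{|\nu|\le 2r+2}b_\nu q_\nu(\lambda)$; its proof decomposes $\lambda$ into rectangular blocks indexed by pairs of corners, sums hook functions by a double Euler--MacLaurin summation, and uses the evenness $G(n)=G(-n)$ of the resulting polynomial (Lemma \ref{th:G012}) --- this is the rigorous form of your hoped-for $z\mapsto-z$ cancellation of odd hook powers. Second, Theorem \ref{th:dqnu} shows $\Delta$ maps the $q$-algebra to itself and lowers $|\nu|$ by at least $2$, resting on the second-difference identity $q_k(\lambda^{i+})-q_k(\lambda)=(x_i+1)^k+(x_i-1)^k-2x_i^k$ and on the partial-fraction identity of Lemma \ref{th:Hlambda+1},
\begin{equation*}
\sum_{0\leq i\leq m}\frac{H_{\lambda}}{H_{\lambda^{i+}}}\cdot\frac{1}{1-x_iz}=\frac{\prod_{1\leq j\leq m}(1-y_jz)}{\prod_{0\leq j\leq m}(1-x_jz)}=\exp\Bigl(\sum_{k\geq 1}\frac{q_k(\lambda)}{k}z^k\Bigr),
\end{equation*}
which is the precise version of your ``rational function with poles at the corner contents.'' Since $S(\lambda,\nu_i)$ has $q$-degree $\le 2\nu_i+2$ and each $\Delta$ drops $q$-degree by $2$, the product has degree $\le 2|\nu|+2\ell$ and $\Delta^{|\nu|+\ell+1}$ kills it (Theorem \ref{th:SDpoly}), which is your filtration argument carried out in the correct algebra. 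So to repair your proof, replace $\mathcal A$ by the $q$-algebra and supply these two lemmas; as written, the degree-control step you defer is the entire mathematical content of the theorem.
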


\medskip

\begin{thm} \label{th:skewstanley}
Let $\mu$ be a given partition and $k$ be a nonnegative integer. For each power sum symmetric function $p_\nu(z_1, z_2, \ldots)$
we have
$$
\sum_{|\lambda/\mu|=n}f_{\lambda/\mu}
D^k\Bigl(\frac{p_\nu(h_{\square}^2:\square\in
\lambda)}{H_\lambda}\Bigr)=\sum_{0\leq i\leq |\nu|+\ell-k}d_{i+k}\binom{n}{i}
$$ 
is a polynomial of $n$, where
$$
d_i=D^i(\frac{p_\nu(h_{\square}^2:\square\in \mu)}{H(\mu)}).
$$ 
In particular, let $k=0$. Then
\begin{equation}
    \frac{1}{(n+|\mu|)!}\sum_{|\lambda/\mu|= n}f_{\lambda}
f_{\lambda/\mu}p_\nu(h_{\square}^2: {\square}\in\lambda)
\end{equation}
is a polynomial of $n$ with degree at most $|\nu|+\ell$. Furthermore,
$$
\frac{1}{(n+|\mu|)!}\sum_{|\lambda/\mu|= n}f_{\lambda}
f_{\lambda/\mu}F(h_{\square}^2: {\square}\in\lambda)
$$
is a polynomial of $n$ for any given partition $\mu$ and any given symmetric function $F$.
\end{thm}

Theorem \ref{th:main1} is difficult to prove, since
the calculations for higher orders of $D$ are extremely complex.  
We have to make a full study of  
a large family of $D$-polynomials. 
In Example \ref{ex:big}, we see that $D^3g(1)$ is equal to
a sum of some fractions. Theorem \ref{th:main1} claims that
the later sum can be annihilated. 

Let us give some applications first. Knowing the polynomiality for some certain functions gets us closer to explicit formulas. By Theorem \ref{th:skewstanley}  with $\mu=\emptyset$, we derive Han-Stanley Theorem. In Section \ref{sec:okadapanova} we prove the following corollary, and show that Okada-Panova hook length formula can be derived by Corollary \ref{th:DS}.

\begin{cor}\label{th:DS}
For each nonnegative integer $r$, the function $S(\lambda, r)$ of partitions is a $D$-polynomial with degree $r+1$. More precisely,
\begin{align}
    H_\lambda D^{r}\Bigl(\frac{S(\lambda,r)}{H_\lambda}\Bigr)&= K_r |\lambda|,\label{eq:DS0} \\
    H_\lambda D^{r+1}\Bigl(\frac{S(\lambda,r)}{H_\lambda}\Bigr)&= K_r, \label{eq:DS1}\\
    H_\lambda D^{r+2}\Bigl(\frac{S(\lambda,r)}{H_\lambda}\Bigr)&= 0. \label{eq:DS2}
\end{align}
\end{cor}

The special case $r=1$ of Okada-Panova hook length formula is usually called {\it marked hook formula} \cite{han}:
\begin{equation}\label{eq:marked}
    \sum_{|\lambda|=n}\frac{f_\lambda}{H_\lambda} \, S(\lambda, 1)
=3\binom{n}{2}.
\end{equation}
In Section \ref{sec:carde} we obtain the following generalization of \eqref{eq:marked}. Notice that we couldn't find such nice explicit formulas for general $S(\lambda,r)$ since $D^{i}\Bigl(\frac{S(\lambda,r)}{H_\lambda}\Bigr)$ doesn't have nice expression for general $i\leq r-1$. 
\begin{thm}[Skew marked hook formula]\label{th:skewmarked}
Let $\mu$ be a given partition. For every $n\geq |\mu|$ we have
\begin{equation}\label{eq:skewmarked}
    \sum_{|\lambda|=n,\, \lambda \supset \mu} \frac{H_\mu
    f_{\lambda/\mu}}{H_\lambda}\, \Bigl(S(\lambda,1)-S(\mu,1)\Bigr) = \frac 32
\,  (n-|\mu|)\, (n+|\mu|-1).
\end{equation}
\end{thm}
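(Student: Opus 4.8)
The plan is to recognize the left-hand side of \eqref{eq:skewmarked} as a single instance of Theorem \ref{th:main2} and then to read off the few relevant difference values from Lemma \ref{th:DS}. Set $m := n - |\mu|$, so that the summation condition ``$|\lambda| = n$, $\lambda \supset \mu$'' becomes $|\lambda/\mu| = m$. Since $S(\mu,1)$ does not depend on $\lambda$, I would introduce the three functions of partitions
$$h(\lambda) := \frac{1}{H_\lambda}, \qquad g(\lambda) := \frac{S(\lambda,1)}{H_\lambda}, \qquad G(\lambda) := g(\lambda) - S(\mu,1)\, h(\lambda) = \frac{S(\lambda,1) - S(\mu,1)}{H_\lambda},$$
so that the left-hand side of \eqref{eq:skewmarked} is exactly $H_\mu \sum_{|\lambda/\mu| = m} f_{\lambda/\mu}\, G(\lambda)$. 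Applying \eqref{eq:main2} of Theorem \ref{th:main2} to the function $G$ and the given $\mu$ rewrites this as $H_\mu \sum_{k=0}^m \binom{m}{k} D^k G(\mu)$, reducing the whole problem to evaluating the finitely many numbers $D^k G(\mu)$.

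Next I would compute the differences of $g$ and $h$ at $\mu$ separately, using linearity of $D^k$ to get $D^k G(\mu) = D^k g(\mu) - S(\mu,1)\, D^k h(\mu)$. For $g$, Lemma \ref{th:DS} with $r=1$ (so that $K_1 = 3$) gives immediately $H_\mu D g(\mu) = 3|\mu|$ from \eqref{eq:DS0} and $H_\mu D^2 g(\mu) = 3$ from \eqref{eq:DS1}, while \eqref{eq:DS2} gives $D^3 g(\lambda) = 0$ for every $\lambda$ and hence $D^k g(\mu) = 0$ for all $k \geq 3$. For $h$ I would use the auxiliary identity $D h(\lambda) = 0$ for every partition $\lambda$, i.e. $\sum_{\lambda^+} 1/H_{\lambda^+} = 1/H_\lambda$; this is equivalent to the classical branching relation $\sum_{\lambda^+} f_{\lambda^+} = (|\lambda|+1) f_\lambda$, which in turn follows from the Pieri rule $s_1 s_\lambda = \sum_{\lambda^+} s_{\lambda^+}$ under the exponential specialization sending $s_\nu$ to $f_\nu/|\nu|!$. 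Consequently $D^k h(\mu) = 0$ for every $k \geq 1$, while $h(\mu) = 1/H_\mu$.

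Combining the two computations, the $k=0$ term vanishes, since $G(\mu) = g(\mu) - S(\mu,1)/H_\mu = 0$; the terms with $k \geq 3$ vanish; and in the surviving terms $k = 1, 2$ the contribution of $h$ is killed. Thus $H_\mu D G(\mu) = 3|\mu|$ and $H_\mu D^2 G(\mu) = 3$, and the sum collapses to
$$H_\mu \sum_{k=0}^m \binom{m}{k} D^k G(\mu) = \binom{m}{1}\, 3|\mu| + \binom{m}{2}\, 3 = 3m|\mu| + \frac{3m(m-1)}{2} = \frac{3m}{2}\bigl(2|\mu| + m - 1\bigr).$$
Substituting $m = n - |\mu|$ gives $2|\mu| + m - 1 = n + |\mu| - 1$, which yields $\tfrac32(n-|\mu|)(n+|\mu|-1)$, as required. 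The only step beyond direct substitution into Theorem \ref{th:main2} and Lemma \ref{th:DS} is the identity $D h(\lambda) = 0$, which is where I expect the main (though modest) work to lie; and the deliberate subtraction of the constant $S(\mu,1)$ inside the sum is exactly what forces the $k=0$ term to cancel, so that the final answer depends only on $|\mu|$ and $n$ and not on the finer statistic $S(\mu,1)$.
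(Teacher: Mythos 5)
Your proof is correct, but it reaches the key difference values by a different route than the paper's own proof. Both arguments funnel through Theorem \ref{th:main2}, reduce everything to $D^kG(\mu)$ for $k=0,1,2$, and dispose of the subtracted constant $S(\mu,1)$ via $D(1/H_\lambda)=0$ (Lemma \ref{th:D+}) --- the paper does this last step in the equivalent form of the skew hook identity \eqref{eq:skewhook}. The divergence is the provenance of the values $H_\mu Dg(\mu)=3|\mu|$, $H_\mu D^2g(\mu)=3$, $D^3g=0$: you take them from Lemma \ref{th:DS} with $r=1$, whose proof (Section \ref{sec:okadapanova}) rests on the full $D$-polynomial machinery of Sections \ref{sec:Dpoly} and \ref{sec:main1} (Theorems \ref{th:Ssym}, \ref{th:dqnu}, \ref{th:SDpoly}) together with the inversion formula \eqref{eq:main2b} and an explicit enumeration of hook-shaped partitions of size $r+1$; the paper instead works in Section \ref{sec:carde} with $L_1(\lambda)=S(\lambda,1)/3$ and obtains the same three values (divided by $3$) from Theorem \ref{th:DL}, which is proved inductively from the Carde--Loubert--Potechin--Sanborn identity (Lemma \ref{th:carde}) and the commutator relation $DD^--D^-D=D$ (Lemma \ref{th:DD-}). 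Your route buys economy of statement --- everything needed is already packaged in Lemma \ref{th:DS}, and there is no circularity since that lemma is proved independently of Theorem \ref{th:skewmarked} --- at the price of importing the heavier symmetric-function apparatus behind it; the paper's route stays self-contained within Section \ref{sec:carde}, avoids the $q_\nu$ machinery entirely, and simultaneously produces the whole family of $D$-polynomials $L_r$ of degree $2r$ (Theorem \ref{th:polynomial*}), of which the present theorem is the $r=1$ specialization. Your justification of $Dh=0$ via the branching rule $\sum_{\lambda^+}f_{\lambda^+}=(|\lambda|+1)f_\lambda$ coincides with the paper's Lemma \ref{th:D+}, and your closing arithmetic $3m|\mu|+3\binom{m}{2}=\frac32\,(n-|\mu|)(n+|\mu|-1)$ with $m=n-|\mu|$ matches the paper's computation exactly.
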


\medskip
Recall that the \emph{content} of the box $\square=(i,j)$ in the Young diagram of a partition is defined by $c_{\square}=j-i$ (see \cite{lascoux, ec2}). Let
$$
C(\lambda,r):=\sum_{\square\in\lambda}\prod_{0\leq j\leq
r-1}(c_{\square}^2-j^2).
$$
The following similar results are obtained for contents in Section \ref{sec:content}.
\begin{thm}\label{th:DC}
For each positive integer $r$, the function $C(\lambda, r)$ of partitions is a $D$-polynomial of degree $r+1$. More precisely,
\begin{align}
    H_\lambda D^{r}\Bigl(\frac{C(\lambda,r)}{H_\lambda}\Bigr)&= \frac{(2r)!}{(r+1)!} |\lambda|,\label{eq:DC0} \\
    H_\lambda D^{r+1}\Bigl(\frac{C(\lambda,r)}{H_\lambda}\Bigr)&= \frac{(2r)!}{(r+1)!}, \label{eq:DC1}\\
    H_\lambda D^{r+2}\Bigl(\frac{C(\lambda,r)}{H_\lambda}\Bigr)&= 0. \label{eq:DC2}
\end{align}
\end{thm}

\begin{thm}[Fujii-Kanno-Moriyama-Okada \cite{fkmo}]\label{th:content}
For each positive integer $n$ we have
\begin{equation*}
    n!  \sum_{|\lambda|=n}\frac{C(\lambda, r)}{H_{\lambda}^2} =
    \frac{(2r)!}{(r+1)!} \binom{n}{r+1}.
\end{equation*}
\end{thm}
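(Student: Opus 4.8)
The plan is to combine Theorem~\ref{th:main2} (with $\mu=\emptyset$) and Lemma~\ref{th:DC}, after rewriting the summand in terms of standard Young tableaux. First I would use the hook length formula \eqref{eq:hookformula}, which gives $f_\lambda = n!/H_\lambda$ when $|\lambda|=n$, to rewrite
$$ n!\sum_{|\lambda|=n}\frac{C(\lambda,r)}{H_\lambda^2} = \sum_{|\lambda|=n} f_\lambda\,\frac{C(\lambda,r)}{H_\lambda} = \sum_{|\lambda|=n} f_\lambda\, g(\lambda), $$
where I set $g(\lambda):=C(\lambda,r)/H_\lambda$. Since $f_{\lambda/\emptyset}=f_\lambda$ and $|\lambda/\emptyset|=|\lambda|$, the identity \eqref{eq:main2} applied with $\mu=\emptyset$ converts this into
$$ \sum_{|\lambda|=n} f_\lambda\, g(\lambda) = \sum_{k=0}^n \binom{n}{k}\, D^k g(\emptyset), $$
so the whole problem reduces to evaluating the constants $D^k g(\emptyset)$.

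Next I would read off these constants from Lemma~\ref{th:DC}. By \eqref{eq:DC2} we have $D^{r+2}g=0$ identically, hence $D^k g=0$ for all $k\geq r+2$ and the sum above truncates at $k=r+1$. Evaluating at $\lambda=\emptyset$, where $H_\emptyset=1$ and $|\emptyset|=0$, formula \eqref{eq:DC1} gives $D^{r+1}g(\emptyset)=(2r)!/(r+1)!$, while \eqref{eq:DC0} gives $D^{r}g(\emptyset)=0$ as a consistency check.

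The remaining task---which I expect to be the crux---is to show $D^k g(\emptyset)=0$ for every $0\leq k\leq r$. The key observation is that the content factor $\prod_{0\leq j\leq r-1}(c_\square^2-j^2)$ vanishes unless $|c_\square|\geq r$, and a box with $|c_\square|\geq r$ can occur only in a partition of size at least $r+1$; consequently $C(\lambda,r)=0$, and hence $g(\lambda)=0$, whenever $|\lambda|\leq r$. Feeding this into the inversion formula \eqref{eq:main2b} (again with $\mu=\emptyset$), every inner sum $\sum_{|\lambda|=j} f_\lambda g(\lambda)$ with $j\leq k\leq r$ vanishes, so that $D^k g(\emptyset)=0$ for all $0\leq k\leq r$. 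Thus only the term $k=r+1$ survives in the truncated sum, which yields
$$ n!\sum_{|\lambda|=n}\frac{C(\lambda,r)}{H_\lambda^2} = \binom{n}{r+1}\,D^{r+1}g(\emptyset) = \frac{(2r)!}{(r+1)!}\binom{n}{r+1}, $$
exactly the asserted formula.
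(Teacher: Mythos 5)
Your proposal is correct and follows essentially the same route as the paper: both rewrite the sum as $\sum_{|\lambda|=n} f_\lambda\, C(\lambda,r)/H_\lambda$, apply \eqref{eq:main2} with $\mu=\emptyset$, kill the terms $0\leq k\leq r$ via the vanishing $C(\lambda,r)=0$ for $|\lambda|\leq r$ together with the inversion formula \eqref{eq:main2b}, and read off the surviving $k=r+1$ term from \eqref{eq:DC1} and \eqref{eq:DC2}. Your explicit justification of why $C(\lambda,r)=0$ forces $|\lambda|\geq r+1$ is a welcome detail that the paper leaves implicit.
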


\begin{thm}[Skew marked content formula]\label{th:skewmarkedcontent}
Let $\mu$ be a given partition. For every $n\geq |\mu|$ we have
\begin{equation}\label{eq:skewmarkedcontent}
    \sum_{|\lambda|=n,\, \lambda \supseteq \mu} \frac{H_\mu
    f_{\lambda/\mu}}{H_\lambda}\, \Bigl(C(\lambda,1)-C(\mu,1)\Bigr) = \frac 12
\,  (n-|\mu|)\, (n+|\mu|-1).
\end{equation}
\end{thm}

The rest of the paper is arranged in the following way. In Section \ref{sec: diffope} we study the general properties for the difference operators $D$ and $D^-$. The connection between  difference operator $D$ and the Plancherel average of functions of partitions will be established in Section \ref{sec:main2}.  In Sections \ref{sec:shifted} and \ref{sec:carde}, we study two specific families of $D$-polynomials arising from the work of the first author on the shifted parts of partitions \cite{han3} and the work of Carde,  Loubert,  Potechin and  Sanborn \cite{clps}. In section \ref{sec:Dpoly}, we study the properties of the functions $q_\nu(\lambda)$ needed in the proof of our main results. Later, we prove the main results Theorems \ref{th:main1} and \ref{th:skewstanley} in Section~\ref{sec:main1}. Finally, we prove and generalize the Okada-Panova hook length formula and the Fujii-Kanno-Moriyama-Okada content formula by difference operator technique in Sections \ref{sec:okadapanova} and \ref{sec:content} respectively.

\section{Difference operators for partitions} \label{sec: diffope} 
The difference operators $D$ and $D^-$ defined in Section \ref{sec:introduction} are our fundamental tools for studying hook length formulas. This section is devoted to establish some basic properties.
It is obvious that $D$ and $D^-$ are  linear operators.

\begin{lem}
Let  $\lambda$ be a partition and $g_1, g_2$ be two functions of partitions. The following identities hold for all $a_1,a_2\in \mathbb{R}$ $:$
\begin{align*}
    D(a_1g_1+a_2g_2)(\lambda) &= a_1Dg_1(\lambda)+a_2Dg_2(\lambda),\\
D^-(a_1g_1+a_2g_2)(\lambda) &= a_1D^-g_1(\lambda)+a_2D^-g_2(\lambda).
\end{align*}
\end{lem}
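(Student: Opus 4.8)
The plan is to verify both identities directly from the definitions of $D$ and $D^-$, exploiting the fact that the two operations from which they are built — summation over the boxes that can be added to or removed from $\lambda$, and multiplication by the scalar $|\lambda|$ — are each linear in the function being acted upon. Since the statement is precisely the assertion that $D$ and $D^-$ are linear operators, no machinery beyond the definitions is required.

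First I would substitute $g=a_1g_1+a_2g_2$ into the definition $Dg(\lambda)=\sum_{\lambda^+}g(\lambda^+)-g(\lambda)$. Because $(a_1g_1+a_2g_2)(\nu)=a_1g_1(\nu)+a_2g_2(\nu)$ for every partition $\nu$ — in particular for $\nu=\lambda$ and for each addable partition $\nu=\lambda^+$ — the finite sum $\sum_{\lambda^+}$ distributes over the two terms, and the constants $a_1,a_2$ factor out of both the sum and the subtracted term. Regrouping the $a_1$-part and the $a_2$-part then reproduces exactly $a_1Dg_1(\lambda)+a_2Dg_2(\lambda)$.

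For $D^-$ the same substitution works verbatim in the definition $D^-g(\lambda)=|\lambda|\,g(\lambda)-\sum_{\lambda^-}g(\lambda^-)$. Here I would additionally observe that the scalar $|\lambda|$ is fixed and multiplies the linear combination $a_1g_1(\lambda)+a_2g_2(\lambda)$, so it distributes through in the same fashion, while the sum $\sum_{\lambda^-}$ over the removable boxes distributes just as before. Collecting terms yields $a_1D^-g_1(\lambda)+a_2D^-g_2(\lambda)$.

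There is no genuine obstacle in this argument; the only point worth stating explicitly is that the index sets $\{\lambda^+\}$ and $\{\lambda^-\}$ depend only on the fixed partition $\lambda$ and not on the function $g$ being evaluated. This is what guarantees that the \emph{same} collection of partitions is summed in every term, so the finite sums may be split and recombined without any change in their ranges, and linearity follows immediately.
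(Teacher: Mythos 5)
Your proof is correct and matches the paper's intent exactly: the paper states this lemma without proof, merely remarking that linearity of $D$ and $D^-$ is obvious, and your direct verification from the definitions (with the sensible observation that the index sets $\{\lambda^+\}$ and $\{\lambda^-\}$ depend only on $\lambda$, not on $g$) is precisely the routine argument being taken for granted.
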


The function $H_\la$ is a $D$-polynomial with degree $0$.
\begin{lem} \label{th:D+}
For each partition $\lambda$ we have
$$
D\Bigl(\frac{1}{H_\la}\Bigr)=0.
$$
\end{lem}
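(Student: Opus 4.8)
The plan is to unwind the definition of $D$ and reduce the claim to a classical ``upward'' branching identity for standard Young tableaux. By definition,
\[
D\Bigl(\frac{1}{H_\lambda}\Bigr)=\sum_{\lambda^+}\frac{1}{H_{\lambda^+}}-\frac{1}{H_\lambda},
\]
where $\lambda^+$ runs over the partitions obtained from $\lambda$ by adding one box. Writing $n=|\lambda|$ and invoking the hook length formula $1/H_\mu=f_\mu/|\mu|!$ from \eqref{eq:hookformula}, the terms have denominators $(n+1)!$ and $n!$, so $D(1/H_\lambda)=0$ is equivalent to $\sum_{\lambda^+}f_{\lambda^+}=(n+1)\,f_\lambda$. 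This branching relation is the heart of the matter.

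To prove it I would argue directly on hook lengths, keeping the section self-contained. Encode $\lambda$ by its beta-numbers $\beta_i=\lambda_i+N-i$ for a fixed $N>\ell(\lambda)$, so that $\beta_1>\cdots>\beta_N\geq 0$ are distinct and $\beta_N=0$. The Frobenius--Aitken form of the hook length formula gives $1/H_\lambda=\Delta(\beta)/\prod_i\beta_i!$ with $\Delta(\beta)=\prod_{i<j}(\beta_i-\beta_j)$. Adding a box in row $k$ amounts to the single change $\beta_k\mapsto\beta_k+1$, and comparing the two Vandermonde products together with the factorials yields
\[
\frac{1/H_{\lambda^+}}{1/H_\lambda}=\frac{1}{\beta_k+1}\prod_{i\neq k}\frac{\beta_i-\beta_k-1}{\beta_i-\beta_k}.
\]
A pleasant feature is that when row $k$ is \emph{not} addable, i.e.\ $\beta_{k-1}=\beta_k+1$, the factor indexed by $i=k-1$ vanishes, so I may freely extend the summation to all $k=1,\dots,N$.

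It then remains to establish the identity $\sum_{k=1}^N \frac{1}{\beta_k+1}\prod_{i\neq k}\frac{\beta_i-\beta_k-1}{\beta_i-\beta_k}=1$. Setting $P(t)=\prod_{i=1}^N(t-\beta_i)$ and using $\prod_{i\neq k}(\beta_i-\beta_k)=(-1)^{N-1}P'(\beta_k)$ together with $\prod_{i\neq k}(\beta_i-\beta_k-1)=(-1)^{N-1}P(\beta_k+1)$, each summand collapses to $P(\beta_k+1)/\bigl((\beta_k+1)P'(\beta_k)\bigr)$, which is exactly the residue at $t=\beta_k$ of the rational function $R(t)=P(t+1)/\bigl((t+1)P(t)\bigr)$. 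Since $R$ is a ratio of monic polynomials of degrees $N$ and $N+1$, it behaves like $1/t$ at infinity, so the sum of all its finite residues equals $1$; apart from the $\beta_k$, the only remaining pole is at $t=-1$, with residue $P(0)/P(-1)$. Here the choice $N>\ell(\lambda)$ pays off: $\beta_N=0$ forces $P(0)=0$, the extra residue drops out, and the surviving sum equals $1$, which is precisely the desired identity.

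The main obstacle, and the point requiring care, is exactly this bookkeeping of addable boxes. One must take $N$ large enough that the ``new row'' corner is represented among the beta-numbers — which is what produces $P(0)=0$ and kills the spurious residue at $t=-1$ — while simultaneously checking that the contributions of non-addable rows vanish of their own accord (they do, through the cancellation of the pole at the corresponding $\beta_k$). Once the residue identity is in place, the branching relation and hence $D(1/H_\lambda)=0$ follow immediately.
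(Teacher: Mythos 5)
Your proposal is correct, but it takes a genuinely different route from the paper's. Both arguments reduce the lemma to the branching identity $(n+1)f_\lambda=\sum_{\lambda^+}f_{\lambda^+}$ (equivalently $\sum_{\lambda^+}1/H_{\lambda^+}=1/H_\lambda$), but the paper proves this bijectively: it matches the pairs $(i,T)$, where $1\le i\le n+1$ and $T$ is an SYT of shape $\lambda$, with SYTs of shape $\lambda^+$, by shifting the entries $\ge i$ up by one and RSK-inserting $i$. You instead prove it analytically, via beta-numbers $\beta_i=\lambda_i+N-i$, the Frame--Robinson--Thrall product form $1/H_\lambda=\Delta(\beta)/\prod_i\beta_i!$, and a residue computation. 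Your details check out: the ratio $\frac{H_\lambda}{H_{\lambda^+}}=\frac{1}{\beta_k+1}\prod_{i\ne k}\frac{\beta_i-\beta_k-1}{\beta_i-\beta_k}$ is right (for $i>k$ both numerator and denominator flip sign, so the product can be written uniformly); non-addable rows contribute $0$ both as summands and as residues, since $\beta_{k-1}=\beta_k+1$ makes $t=\beta_k$ a removable singularity of $R(t)=P(t+1)/\bigl((t+1)P(t)\bigr)$; and the hypothesis $N>\ell(\lambda)$ both places the new-row corner among the beta-numbers and forces $P(0)=0$, killing the residue at $t=-1$, so that $R(t)=1/t+O(1/t^2)$ at infinity gives $\sum_k \mathrm{Res}_{t=\beta_k}R=1$ as needed. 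As for what each approach buys: the paper's bijection is purely combinatorial and explains the identity structurally (it is the differential-poset relation for Young's lattice, cf.\ \cite{stan1}), while your computation is heavier but self-contained modulo the product form of the hook length formula. Notably, your device is close in spirit to machinery the paper itself develops later: Lemma \ref{th:Hlambda+1} proves $\sum_i\frac{H_\lambda}{H_{\lambda^{i+}}}\cdot\frac{1}{1-x_iz}=\frac{\prod_j(1-y_jz)}{\prod_i(1-x_iz)}$ by the same vanishing-polynomial/partial-fraction argument, with corner contents in place of beta-numbers, and its specialization at $z=0$ recovers exactly this lemma. So your method generalizes directly to the weighted sums $\sum_i\frac{H_\lambda}{H_{\lambda^{i+}}}x_i^k$ the paper needs in Section \ref{sec:Dpoly}, at the cost of losing the bijective explanation of the branching rule.
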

\begin{proof}
Let $n=| \la |$. Consider the following two sets related to standard Young tableaux (written as ``SYT'' for simplicity)
\begin{align*}
    A&=\{(i, T) : 1\leq i\leq n+1, T\ \text{is an SYT of shape } \lambda
\},\\
B&=\{(\lambda^+, T^+) : |\lambda^+/\lambda|=1,
T^+\ \text{is an SYT of shape } \lambda^+\}.
\end{align*}
Let $(i, T)\in A$. First we increase every entry which is greater than or equal to $i$ by one in $T$. Then we use the Robinson-Schensted-Knuth algorithm \cite{rsk} to insert the integer $i$ into $T$ to get a new SYT $T^+$. Let $\lambda^+$ be the shape of $T^+$. We have $|\lambda^+/\lambda|=1$, so that $(\lambda^+, T^+)\in B$. It is easy to see that this is a bijection between sets $A$ and $B$. The cardinalities of $A$ and $B$ are $(n+1)f_\lambda$ and $\sum_{\lambda^+} f_{\lambda^+}$ respectively. Hence we obtain
$$
(n+1)f_{\lambda}=\sum_{\lambda^{+}}f_{\lambda^{+}}.
$$ 
This implies 
\begin{equation*}
    D\Bigl(\frac{1}{H_\lambda}\Bigr) =
\sum_{\lambda^{+}}\frac{1}{H_{\lambda^{+}}}-\frac{1}{H_\lambda}
 =
\frac{1}{(n+1)!}\Bigl(\sum_{\lambda^{+}}f_{\lambda^{+}}-(n+1)f_{\lambda}\Bigr)
= 0.\qedhere
\end{equation*}
\end{proof}

For the difference operator $D^-$ we obtain the following similar results.
\begin{lem}\label{th:D-}
Let $g(\lambda)$ be a function of partitions. Then $D^-g(\lambda)=0$ for every  partition $\lambda$ if and only if
$$
g(\lambda)=\frac{a}{H_\lambda}
$$ 
for some constant $a$.
\end{lem}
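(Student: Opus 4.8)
The plan is to prove both implications, with the branching rule for standard Young tableaux under box removal as the common ingredient, dual to the one used in Lemma~\ref{th:D+}.

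First I would dispose of the ``if'' direction by establishing that $D^-(1/H_\lambda)=0$. The relevant fact is the removal branching rule: in any SYT of shape $\lambda$ the box holding the largest entry $|\lambda|$ must occupy a removable corner, and deleting it sets up a bijection proving $f_\lambda=\sum_{\lambda^-}f_{\lambda^-}$. Combined with the hook length formula $f_\lambda=|\lambda|!/H_\lambda$ and $f_{\lambda^-}=(|\lambda|-1)!/H_{\lambda^-}$, division by $(|\lambda|-1)!$ yields the key identity
$$ \frac{|\lambda|}{H_\lambda}=\sum_{\lambda^-}\frac{1}{H_{\lambda^-}}, $$
which is exactly $D^-(1/H_\lambda)=0$. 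By linearity of $D^-$, we get $D^-(a/H_\lambda)=0$ for any constant $a$, proving one direction.

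For the converse, suppose $D^-g(\lambda)=0$ for every $\lambda$, that is, $|\lambda|\,g(\lambda)=\sum_{\lambda^-}g(\lambda^-)$. I would set $a:=g(\emptyset)$ and argue by induction on $n=|\lambda|$ that $g(\lambda)=a/H_\lambda$. The base case $n=0$ holds by the definition of $a$, since $H_\emptyset=1$. For the inductive step with $n\geq 1$, the hypothesis reads $n\,g(\lambda)=\sum_{\lambda^-}g(\lambda^-)$; applying the induction hypothesis to each $\lambda^-$ (which has size $n-1$) and then the key identity from the first part gives $n\,g(\lambda)=a\sum_{\lambda^-}1/H_{\lambda^-}=a\,n/H_\lambda$. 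Dividing by $n$ yields $g(\lambda)=a/H_\lambda$, completing the induction.

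The one point that needs care is that the defining equation is vacuous at $\lambda=\emptyset$: removing a box from the empty partition produces an empty sum, so $0\cdot g(\emptyset)=0$ imposes no constraint and leaves $g(\emptyset)$ free. This free value is precisely the constant $a$, and the reason the induction then pins down $g$ everywhere is that the coefficient $n$ multiplying $g(\lambda)$ is nonzero for every $n\geq 1$, so the recurrence determines $g(\lambda)$ uniquely from the values on smaller partitions. This is the main (though mild) obstacle; everything else is the branching rule together with a routine induction.
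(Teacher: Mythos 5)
Your proof is correct and takes essentially the same route as the paper: the branching rule $f_\lambda=\sum_{\lambda^-}f_{\lambda^-}$ combined with the hook length formula gives $D^-\bigl(a/H_\lambda\bigr)=0$, and the converse is the same induction on $|\lambda|$ starting from $a=g(\emptyset)$. Your explicit handling of the vacuous case $\lambda=\emptyset$ and the division by $n\geq 1$ just spells out details the paper compresses into ``by induction and \eqref{eq:D-}.''
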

\begin{proof}
By the definition of SYTs it is obvious that $f_{\lambda}=\sum_{\lambda^-}f_{\lambda^-}$. Thus,
\begin{equation}\label{eq:D-}
    D^-\Bigl(\frac{a}{H_\lambda}\Bigr)
= \frac{a|\lambda|}{H_\lambda}- \sum_{\lambda^-}\frac{a}{H_{\lambda^-}}
=
\frac{a}{(|\lambda|-1)!}\Bigl(f_{\lambda}-\sum_{\lambda^-}f_{\lambda^-}\Bigr)
= 0.
\end{equation}
On the other hand, $D^-g(\lambda)=0$ implies $| \lambda |\, g(\lambda)=\sum_{\lambda^-}g(\lambda^-)$. Let $a=g(\emptyset)$ where~$\emptyset$ is the empty partition. By induction and \eqref{eq:D-} we obtain $g(\lambda)=\frac{a}{H_\lambda}.$
\end{proof}

Notice that it is not easy to determine the functions $g(\lambda)$ under the condition $Dg(\lambda)=0$ for every partition $\lambda$. For example, by \eqref{eq:DS0} and the following Lemma \ref{th:Dbino} we obtain
$$
D\Biggl(\frac{\sum\limits_{{\square}\in\lambda}(h_{\square}^2-1)-3\binom{|\la|}{2}
}{H_\lambda}\Biggr)=0.
$$

\begin{lem} \label{th:Dbino}
  For each positive integer $r$ we have
 $$D\Bigl(\frac{\binom{|\la|}{r}}{H_\lambda}\Bigr)=\frac{\binom{|\la|}{r-1}}{H_\lambda}
\text{\qquad and\qquad}
D^-\Bigl(\frac{\binom{|\la|}{r}}{H_\lambda}\Bigr)=\frac{r\binom{|\la|}{r}}{H_\lambda}.$$
\end{lem}
\begin{proof}
Let $n=|\la|$. By Lemmas \ref{th:D+} and \ref{th:D-} we obtain
\begin{align*}
    D\Bigl(\frac{\binom{n}{r}}{H_\lambda}\Bigr) &=
\sum_{\lambda^{+}}\frac{\binom{n+1}{r}}{H_{\lambda^{+}}}-
\frac{\binom{n}{r}}{H_\lambda}
 =
\frac{\binom{n+1}{r}-\binom{n}{r}}{H_\lambda}
= \frac{\binom{n}{r-1}}{H_\lambda},\\
    D^-\Bigl(\frac{\binom{n}{r}}{H_\lambda}\Bigr)
    &= \frac{n\binom{n}{r}}{H_\lambda}- \sum_{\lambda^-}\frac{\binom{n-1}{r}}{H_{\lambda^-}}
 =
\frac{n\binom{n}{r}-n\binom{n-1}{r}}{H_\lambda}
= \frac{r\binom{n}{r}}{H_\lambda}.\qedhere
\end{align*}
\end{proof}

In fact, we obtain the following more general results for $D$ and $D^-$.
\begin{lem}\label{th:leibniz1}
For each function $g$ defined on partitions we obtain
\begin{align*}
    D\Bigl(\frac{g(\lambda)}{H_\lambda}\Bigr)&=
\sum_{\lambda^{+}}\frac{g(\lambda^{+})-g(\lambda)}{H_{\lambda^+}},\\
\noalign{\noindent and}
D^-\Bigl(\frac{g(\lambda)}{H_\lambda}\Bigr)&=
\sum_{\lambda^{-}}\frac{g(\lambda)-g(\lambda^-)}{H_{\lambda^-}}.
\end{align*}
\end{lem}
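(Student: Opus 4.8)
The plan is to prove both identities by directly unfolding the definitions of $D$ and $D^-$ and then absorbing the subtracted term into the sum over $\lambda^+$ (resp. $\lambda^-$) by means of the normalization identities already established in Lemmas \ref{th:D+} and \ref{th:D-}. No new combinatorics is needed: the statement is essentially a ``Leibniz-type'' reformulation, and each half reduces, after splitting a numerator, to a single hook-length summation identity that is at hand.

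For the first identity I would start from the definition, writing $D\bigl(g(\lambda)/H_\lambda\bigr)=\sum_{\lambda^+} g(\lambda^+)/H_{\lambda^+}-g(\lambda)/H_\lambda$. On the target right-hand side I split the numerator to get $\sum_{\lambda^+} g(\lambda^+)/H_{\lambda^+}-g(\lambda)\sum_{\lambda^+}1/H_{\lambda^+}$. Comparing the two expressions, the claim is equivalent to $1/H_\lambda=\sum_{\lambda^+}1/H_{\lambda^+}$, which is exactly the assertion $D\bigl(1/H_\lambda\bigr)=0$ of Lemma \ref{th:D+}. Substituting this identity finishes the first half.

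For the second identity the argument is parallel. Expanding the definition gives $D^-\bigl(g(\lambda)/H_\lambda\bigr)=|\lambda|\,g(\lambda)/H_\lambda-\sum_{\lambda^-} g(\lambda^-)/H_{\lambda^-}$, while splitting the numerator on the target side yields $g(\lambda)\sum_{\lambda^-}1/H_{\lambda^-}-\sum_{\lambda^-}g(\lambda^-)/H_{\lambda^-}$. Hence the identity reduces to $|\lambda|/H_\lambda=\sum_{\lambda^-}1/H_{\lambda^-}$, which is the case $a=1$ of \eqref{eq:D-} in the proof of Lemma \ref{th:D-} (equivalently $D^-\bigl(1/H_\lambda\bigr)=0$). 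The only point requiring care — and it is minor — is that the factor $g(\lambda)$ pulls out of the sum over $\lambda^+$ (resp. $\lambda^-$) precisely because it does not depend on the summation index, so that the two preceding lemmas apply verbatim; there is no genuine obstacle beyond this bookkeeping.
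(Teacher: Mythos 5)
Your proof is correct and takes essentially the same approach as the paper: both expand the definitions of $D$ and $D^-$ and then absorb the leftover term via $1/H_\lambda=\sum_{\lambda^+}1/H_{\lambda^+}$ (Lemma \ref{th:D+}) and $|\lambda|/H_\lambda=\sum_{\lambda^-}1/H_{\lambda^-}$ (the identity \eqref{eq:D-} from the proof of Lemma \ref{th:D-}). Nothing further is needed.
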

\begin{proof}
By Lemmas \ref{th:D+} and \ref{th:D-} we have
\begin{align*}
    D\Bigl(\frac{g(\lambda)}{H_\lambda}\Bigr) &=
\sum_{\lambda^{+}}\frac{g(\lambda^{+})}{H_{\lambda^{+}}}
-\frac{g(\lambda)}{H_\lambda}
=
\sum_{\lambda^{+}}\frac{g(\lambda^{+})-g(\lambda)}{H_{\lambda^+}},\\
D^-\Bigl(\frac{g(\lambda)}{H_\lambda}\Bigr)&=
|\lambda|\frac{g(\lambda)}{H_\lambda}-
\sum_{\lambda^-}\frac{g(\lambda^-)}{H_{\lambda^-}}
=
\sum_{\lambda^{-}}\frac{g(\lambda)-g(\lambda^-)}{H_{\lambda^-}}.\qedhere
\end{align*}
\end{proof}

\begin{lem}[Leibniz's rule] \label{th:leibniz}
Let $g_1,g_2,\cdots,g_r$ be functions defined on partitions.  We
have
\begin{align*}
D\Bigl(\frac{\prod_{1\leq j\leq r}{g_j(\lambda)}}{H_\lambda}\Bigr)
&=\sum_{\lambda^{+}}\sum\limits_{(*)}
\frac 1{H_{\lambda^+}}
\Bigl({\prod_{k\in
    A}\bigl(g_k(\lambda^+)-g_k(\lambda)\bigr) \prod_{l\in B}
g_l(\lambda)}\Bigr) \\
\noalign{\noindent and}
D^-\Bigl(\frac{\prod_{1\leq j\leq r}{g_j(\lambda)}}{H_\lambda}\Bigr)
&=-\sum_{\lambda^{-}}
\sum\limits_{(*)}
\frac 1{H_{\lambda^-}}\Bigl(
{\prod_{k\in A} \bigl(g_k(\lambda^-)-g_k(\lambda)\bigr)\prod_{l\in B}
g_l(\lambda)}
\Bigr),
\end{align*}
where $[r]:=\{1,2,\cdots,r\}$ and the sum $(*)$ ranges over all
pairs $(A,B)\subset [r]\times [r]$ such that $A\cup B=[r],\,  A\cap
B=\emptyset$ and $A\neq \emptyset$. 
\goodbreak
In particular,
\begin{align*}D\Bigl(\frac{g_1(\lambda)g_2(\lambda)}{H_\lambda}\Bigr)&=
g_1(\lambda)D\Bigl(\frac{g_2(\lambda)}{H_\lambda}\Bigr)+
g_2(\lambda)D\Bigl(\frac{g_1(\lambda)}{H_\lambda}\Bigr)\\
& \quad +
\sum_{\lambda^{+}}
\frac 1{H_{\lambda^+}}
\bigl(g_1(\lambda^{+})-g_1(\lambda)\bigr)
\, \bigl(g_2(\lambda^{+})-g_2(\lambda)\bigr)
\end{align*}
\goodbreak
and
\begin{align*}
D^-\Bigl(
\frac{g_1(\lambda)g_2(\lambda)}{H_\lambda}\Bigr)&=
g_1(\lambda)D^-\Bigl(\frac{g_2(\lambda)}{H_\lambda}\Bigr)+
g_2(\lambda)D^-\Bigl(\frac{g_1(\lambda)}{H_\lambda}\Bigr)
\\
&\quad -
\sum_{\lambda^{-}}
\frac 1{H_{\lambda^-}}
\bigl(g_1(\lambda)-g_1(\lambda^-)\bigr)
\, \bigl(g_2(\lambda)-g_2(\lambda^-)\bigr)
.
\end{align*}
\end{lem}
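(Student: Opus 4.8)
The plan is to reduce both identities to the single-function version already established in Lemma~\ref{th:leibniz1} and then expand the product by a binomial-type identity. First I would apply the $D$-part of Lemma~\ref{th:leibniz1} to the function $g(\lambda)=\prod_{1\leq j\leq r}g_j(\lambda)$, which immediately gives
$$
D\Bigl(\frac{\prod_{1\leq j\leq r}g_j(\lambda)}{H_{\lambda}}\Bigr)
=\sum_{\lambda^{+}}\frac{\prod_{j}g_j(\lambda^{+})-\prod_{j}g_j(\lambda)}{H_{\lambda^{+}}}.
$$
The whole problem then reduces to expanding, for each fixed $\lambda^{+}$, the numerator $\prod_{j}g_j(\lambda^{+})-\prod_{j}g_j(\lambda)$ in a way that exposes the differences $g_k(\lambda^{+})-g_k(\lambda)$.

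The key algebraic step is to write $g_j(\lambda^{+})=g_j(\lambda)+\bigl(g_j(\lambda^{+})-g_j(\lambda)\bigr)$ for each $j$ and expand the product
$$
\prod_{1\leq j\leq r}\Bigl(g_j(\lambda)+\bigl(g_j(\lambda^{+})-g_j(\lambda)\bigr)\Bigr)
=\sum_{A\subseteq[r]}\ \prod_{k\in A}\bigl(g_k(\lambda^{+})-g_k(\lambda)\bigr)\prod_{l\in B}g_l(\lambda),
$$
where $B=[r]\setminus A$. The single summand indexed by $A=\emptyset$ equals $\prod_{j}g_j(\lambda)$, which cancels exactly against the subtracted term. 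What survives is precisely the restricted sum $(*)$ over pairs $(A,B)$ with $A\neq\emptyset$, and reinstating the factor $1/H_{\lambda^{+}}$ and summing over all $\lambda^{+}$ yields the asserted formula for $D$.

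The $D^-$ case is entirely parallel: the $D^-$-part of Lemma~\ref{th:leibniz1} produces $\sum_{\lambda^{-}}\bigl(\prod_{j}g_j(\lambda)-\prod_{j}g_j(\lambda^{-})\bigr)/H_{\lambda^{-}}$, and the same expansion applies with $g_j(\lambda^{-})=g_j(\lambda)+\bigl(g_j(\lambda^{-})-g_j(\lambda)\bigr)$. The only point that requires care is the overall sign: here it is $\prod_{j}g_j(\lambda)$ that is written first, so after cancelling the $A=\emptyset$ term one is left with a global minus in front of the sum $(*)$, which accounts for the $-\sum_{\lambda^{-}}$ in the stated identity. This sign bookkeeping is the one place where a slip is easy, but it is the only subtlety; there is no genuine obstacle, since the whole argument is a telescoping/binomial expansion layered on top of Lemma~\ref{th:leibniz1}.

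Finally, the two ``in particular'' formulas (the case $r=2$) follow by listing the three admissible subsets $A=\{1\},\,\{2\},\,\{1,2\}$. For the singleton contributions one pulls the constant factors $g_2(\lambda)$ and $g_1(\lambda)$ (constant in the sum over $\lambda^{\pm}$, since $\lambda$ is fixed) outside the sum and recognizes the remaining sums as $D\bigl(g_1(\lambda)/H_{\lambda}\bigr)$ and $D\bigl(g_2(\lambda)/H_{\lambda}\bigr)$ via Lemma~\ref{th:leibniz1}; the subset $A=\{1,2\}$ gives the explicit cross term. The corresponding $D^-$ specialization is identical up to the sign already discussed.
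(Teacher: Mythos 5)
Your proposal is correct and takes essentially the same route as the paper: apply Lemma~\ref{th:leibniz1} to the product, write $g_j(\lambda^{\pm})=g_j(\lambda)+\bigl(g_j(\lambda^{\pm})-g_j(\lambda)\bigr)$, and expand over subsets $A\subseteq[r]$, with the $A=\emptyset$ term cancelling against the subtracted product. Your careful sign bookkeeping for the $D^-$ identity (which the paper dispenses with as ``similar'') and your derivation of the $r=2$ special cases are both accurate.
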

\begin{proof}
    By Lemma \ref{th:leibniz1} we have
\begin{align*}
    D\Bigl(\frac{\prod_{1\leq j\leq
r}{g_j(\lambda)}}{H_\lambda}\Bigr)
&=
\sum_{\lambda^{+}}
\frac 1{H_{\lambda^{+}}}
\Bigl({\prod_{1\leq j\leq
r}{g_j(\lambda^{+})}-\prod_{1\leq j\leq
r}{g_j(\lambda)}}\Bigr)
\\
&=
\sum_{\lambda^{+}}
\frac 1{H_{\lambda^+}}
\Bigl({\prod_{1\leq j\leq
    r}\bigl(g_j(\lambda)+(g_j(\lambda^+)-g_j(\lambda))\bigr)-
    \prod_{1\leq j\leq
r}{g_j(\lambda)}}\Bigr)
\\
&= \sum_{\lambda^{+}}
\sum_{(*)}
\frac 1{H_{\lambda^+}}
\Bigl({\prod_{k\in
A} \bigl(g_k(\lambda^+)-g_k(\lambda)\bigr)\prod_{l\in B}
g_l(\lambda)}\Bigr).
\end{align*}
The proof for $D^-$ is similar.
\end{proof}

For higher-order difference operators, we have the following result.

\begin{lem} \label{th:Dkbino}
Suppose that $k$ is a nonnegative integer. Let $n=|\lambda|$. Then
we have
\begin{equation}\label{eq:Dkbino}
D^k\Bigl(\binom{n}{r}g(\lambda)\Bigr)=
\sum_{i=0}^k\binom{k}{i}\binom{n+i}{r-k+i}
D^ig(\lambda).
\end{equation}
\end{lem}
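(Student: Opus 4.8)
The plan is to argue by induction on $k$, the base case $k=0$ being immediate since both sides collapse to $\binom{n}{r}g(\lambda)$. Before running the induction I would first record the elementary rule governing how $D$ interacts with a factor that depends on $\lambda$ only through its size. Concretely, for any function $G$ of partitions and any function $\phi$ of a single integer variable, writing $n=|\lambda|$, I claim
\[
D\bigl(\phi(|\lambda|)\,G(\lambda)\bigr)=\phi(n+1)\,DG(\lambda)+\bigl(\phi(n+1)-\phi(n)\bigr)\,G(\lambda).
\]
This follows straight from the definition of $D$: every $\lambda^{+}$ has size $n+1$, so the factor $\phi(|\lambda^{+}|)=\phi(n+1)$ pulls out of $\sum_{\lambda^{+}}\phi(|\lambda^{+}|)G(\lambda^{+})$, and $\sum_{\lambda^{+}}G(\lambda^{+})=DG(\lambda)+G(\lambda)$. (As a sanity check, taking $\phi(n)=\binom{n}{r}$, $G=1/H_\lambda$ and using $D(1/H_\lambda)=0$ from Lemma \ref{th:D+} recovers Lemma \ref{th:Dbino}.)

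For the inductive step I would apply $D$ to the induction hypothesis $D^{k}\bigl(\binom{n}{r}g(\lambda)\bigr)=\sum_{i=0}^{k}\binom{k}{i}\binom{n+i}{r-k+i}D^{i}g(\lambda)$, invoking the displayed rule termwise with $\phi(n)=\binom{n+i}{r-k+i}$ and $G=D^{i}g$. Here $DG=D^{i+1}g$, while Pascal's rule gives $\phi(n+1)-\phi(n)=\binom{n+1+i}{r-k+i}-\binom{n+i}{r-k+i}=\binom{n+i}{r-k+i-1}$. Thus each term contributes a $D^{i+1}g$-piece with coefficient $\binom{k}{i}\binom{n+1+i}{r-k+i}$ and a $D^{i}g$-piece with coefficient $\binom{k}{i}\binom{n+i}{r-k+i-1}$, producing two sums $S_1$ and $S_2$.

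It then remains to reassemble $S_1+S_2$ into the desired shape with $k$ replaced by $k+1$. In $S_1$ I would reindex by $j=i+1$, noting that $r-k+i=r-(k+1)+j$, so that the binomial becomes $\binom{n+j}{r-(k+1)+j}$ and the coefficient becomes $\binom{k}{j-1}$ for $j=1,\dots,k+1$; in $S_2$ the exponent already satisfies $r-k+i-1=r-(k+1)+i$, giving coefficient $\binom{k}{j}$ for $j=0,\dots,k$. Collecting the coefficient of $\binom{n+j}{r-(k+1)+j}D^{j}g(\lambda)$ yields $\binom{k}{j-1}+\binom{k}{j}=\binom{k+1}{j}$ by Pascal's rule, which completes the step. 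The only real obstacle is the bookkeeping of the two index shifts, and in particular confirming that the boundary values $j=0$ and $j=k+1$ are handled automatically by the vanishing conventions $\binom{k}{-1}=\binom{k}{k+1}=0$; everything else is the two applications of Pascal's identity.
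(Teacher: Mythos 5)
Your proof is correct and follows essentially the same route as the paper: the paper's equation \eqref{eq:Dbinog} is exactly your product rule specialized to $\phi(n)=\binom{n+j}{r}$, and both arguments then run the same induction on $k$ with the same reindexing and two applications of Pascal's identity. The only difference is cosmetic — you state the commutation rule for a general size-dependent factor $\phi(|\lambda|)$ rather than just for binomial coefficients.
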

\begin{proof}
First we have
\begin{align}\label{eq:Dbinog}
D\Bigl(\binom{n+j}{r}g(\lambda)\Bigr) &=
\sum_{\lambda^{+}}\binom{n+1+j}{r}g(\lambda^{+})
-\binom{n+j}{r}g(\lambda)
\nonumber\\
&=\binom{n+1+j}{r}Dg(\lambda)
+\binom{n+j}{r-1}g(\lambda).
\end{align}
We prove \eqref{eq:Dkbino} by induction. The case $k=0,1$ is trivial by \eqref{eq:Dbinog}. Assume that the lemma is true for some $k\geq 1$, then
\begin{align*}
    &\quad  D\Bigl(D^k\bigl(\binom{n}{r}g(\lambda)\bigr)\Bigr)\\
    &=
\sum_{i=0}^k\binom{k}{i}D\Bigl(\binom{n+i}{r-k+i}D^ig(\lambda)\Bigr)
\\
&= \sum_{i=0}^k\binom{k}{i}
\Bigl(\binom{n+1+i}{r-k+i}D^{i+1}g(\lambda)
+\binom{n+i}{r-k+i-1}D^ig(\lambda)\Bigr)
\\
&=\sum_{i=1}^{k+1}\binom{k}{i-1}
\binom{n+i}{r-k+i-1}D^{i}g(\lambda)
+\sum_{i=0}^{k}\binom{k}{i}
\binom{n+i}{r-k+i-1}D^{i}g(\lambda)
\\
&= \sum_{i=0}^{k+1}\binom{k+1}{i}
\binom{n+i}{r-k+i-1}D^{i}g(\lambda).\qedhere
\end{align*}
\end{proof}
\goodbreak

\begin{lem} \label{th:DD-}
The two difference operators $D$ and $D^-$ are noncommutative, and satisfy
$$DD^--D^-D=D.$$
\end{lem}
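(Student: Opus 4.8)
The plan is to verify the operator identity $DD^-g(\lambda)-D^-Dg(\lambda)=Dg(\lambda)$ directly, for an arbitrary function $g$ and an arbitrary partition $\lambda$, by expanding both composite operators from the definitions and comparing. First I would substitute $D^-g(\lambda)=|\lambda|\,g(\lambda)-\sum_{\lambda^-}g(\lambda^-)$ into $D$ and, using $|\lambda^+|=|\lambda|+1$, obtain $DD^-g(\lambda)$; likewise I would expand $D^-Dg(\lambda)$ from $Dg(\lambda)=\sum_{\lambda^+}g(\lambda^+)-g(\lambda)$. Upon subtracting, the terms $|\lambda|\,g(\lambda)$, $\sum_{\lambda^-}g(\lambda^-)$, and the part $|\lambda|\sum_{\lambda^+}g(\lambda^+)$ all cancel, leaving
\[
DD^-g(\lambda)-D^-Dg(\lambda)=\sum_{\lambda^+}g(\lambda^+)-\sum_{\lambda^+}\sum_{(\lambda^+)^-}g\bigl((\lambda^+)^-\bigr)+\sum_{\lambda^-}\sum_{(\lambda^-)^+}g\bigl((\lambda^-)^+\bigr).
\]
Since $Dg(\lambda)=\sum_{\lambda^+}g(\lambda^+)-g(\lambda)$, the whole statement reduces to the single combinatorial identity
\[
\sum_{\lambda^+}\sum_{(\lambda^+)^-}g\bigl((\lambda^+)^-\bigr)-\sum_{\lambda^-}\sum_{(\lambda^-)^+}g\bigl((\lambda^-)^+\bigr)=g(\lambda).
\]

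To prove this reduced identity I would collect, for each partition $\mu$ with $|\mu|=|\lambda|$, the coefficient of $g(\mu)$ on the left-hand side. The first double sum (add a box, then remove one) contributes the number of partitions $\nu$ with $|\nu|=|\lambda|+1$ that contain both $\lambda$ and $\mu$, i.e.\ the number of common upper covers in Young's lattice; the second double sum (remove a box, then add one) contributes the number of partitions $\rho$ with $|\rho|=|\lambda|-1$ contained in both $\lambda$ and $\mu$, i.e.\ the number of common lower covers. Thus the identity is equivalent to the assertion that, for every $\mu$ of the same size as $\lambda$, the number of common upper covers minus the number of common lower covers equals $1$ when $\mu=\lambda$ and $0$ otherwise --- precisely the statement that Young's lattice is a $1$-differential poset.

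It then remains to establish these two corner counts. For the diagonal term $\mu=\lambda$, the common upper covers are the $\lambda^+$ obtained by filling an addable corner and the common lower covers are the $\lambda^-$ obtained by deleting a removable corner, so I must show that every partition has exactly one more addable corner than removable corner; this I would read off from the outer staircase boundary of the Young diagram, along which addable and removable corners strictly alternate and both begin and end with addable ones, forcing the difference $1$. For $\mu\neq\lambda$, I would note that $\lambda$ and $\mu$ possess a common upper cover exactly when the union $\lambda\cup\mu$ of their diagrams has size $|\lambda|+1$, in which case that union is the unique such cover, and symmetrically that they possess a common lower cover exactly when the intersection $\lambda\cap\mu$ has size $|\lambda|-1$, the unique one being that intersection. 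A brief check shows both conditions amount to $\lambda$ and $\mu$ differing by the relocation of a single box, so the two counts coincide and the off-diagonal contributions cancel.

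The bookkeeping in the opening expansion is routine. The substantive point --- and the step I expect to be the main obstacle --- is the reduced identity, and within it the clean verification that common upper and common lower covers are equinumerous for $\mu\neq\lambda$, together with the corner count $a(\lambda)=r(\lambda)+1$ responsible for producing the $g(\lambda)$ on the right-hand side.
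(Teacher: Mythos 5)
Your proof is correct and follows essentially the same route as the paper: after the identical expansion and cancellation, both arguments reduce to the identity $\sum_{(\lambda^{+})^-} g\bigl((\lambda^{+})^-\bigr) - \sum_{(\lambda^-)^{+}} g\bigl((\lambda^-)^{+}\bigr) = g(\lambda)$, which the paper establishes via the set equality $\{(\lambda^{+})^-: (\lambda^{+})^-\neq \lambda\}=\{(\lambda^-)^{+}:(\lambda^-)^{+}\neq \lambda\}$ (your off-diagonal union/intersection matching of common covers) together with the fact that addable corners outnumber removable corners by exactly one (your diagonal count). Your coefficient-of-$g(\mu)$ bookkeeping simply makes explicit, with slightly more care about uniqueness of common covers, the $1$-differential-poset structure of Young's lattice that the paper's bijection uses implicitly.
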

\begin{proof}
If $(\lambda^{+})^-\neq \lambda,$ then $(\lambda^{+})^-=\lambda\cup\{\square_1\}\setminus \{\square_2\}$ for some boxes $\square_1 \neq \square_2$. This means that we can switch the order of adding $\square_1$ and removing $\square_2$ and get the same partition $(\lambda\setminus\{\square_2\})\cup\{\square_1\} \in \{(\lambda^-)^+:(\lambda^-)^+\neq \lambda\}.$ Consequently,
\begin{equation}
\{(\lambda^{+})^-: (\lambda^{+})^-\neq \lambda\}=\{(\lambda^-)^+:(\lambda^-)^+\neq
\lambda\}.
\end{equation}
For a given partition, the number of ways to add a box minus the number of ways to remove a box always equals $1$. Thus
\begin{align*}
\sum_{(\lambda^{+})^-}g\bigl((\lambda^{+})^-\bigr) -
\sum_{(\lambda^-)^+}g\bigl((\lambda^-)^+\bigr)=g(\lambda).
\end{align*}
By definition of $D$ and $D^-$, we have
\begin{align*}
DD^-g(\lambda)&= \sum_{\lambda^{+}}D^-g(\lambda^{+})-D^-g(\lambda)
\\
&=\sum_{\lambda^{+}}|\lambda^+|g(\lambda^{+})
-\sum_{(\lambda^{+})^-}g\bigl((\lambda^{+})^-\bigr) -|\lambda|
g(\lambda) + \sum_{\lambda^-}g(\lambda^-)\\
\noalign{\noindent and}
D^-Dg(\lambda)&= | \lambda | Dg(\lambda)-
\sum_{\lambda^-}Dg(\lambda^-)
\\
&=|\lambda|\sum_{\lambda^{+}}g(\lambda^{+})-|\lambda|
g(\lambda) -\sum_{(\lambda^-)^+}g\bigl((\lambda^-)^+\bigr)  +
\sum_{\lambda^-}g(\lambda^-).
\end{align*}
The above three identities yield
$$
DD^-g(\lambda)-D^-Dg(\lambda)
=\sum_{\lambda^{+}}g(\lambda^{+})-g(\lambda) =Dg(\lambda).\qedhere
$$
\end{proof}
\section{Telescoping sum for partitions} \label{sec:main2} 
In this section, we build the connection between the difference operator $D$ and the Plancherel average of functions of partitions. The main result in this section is Theorem \ref{th:main2}.
\begin{lem} \label{th:telescope}
For each given partition $\mu$ and function  $g$ of partitions, let
\begin{align*}
A(n)&:=\sum_{|\lambda/\mu|=n}f_{\lambda/\mu}g(\lambda)\\
\noalign{\noindent and}
B(n)&:=\sum_{|\lambda/\mu|=n}f_{\lambda/\mu}Dg(\lambda).
\end{align*}
Then
\begin{equation}\label{eq:AAB}
A(n)=A(0)+\sum_{k=0}^{n-1}B(k).
\end{equation}
\end{lem}
\begin{proof}
By the definition of the operator $D$,
$$\sum_{\lambda^{+}}g(\lambda^{+})=g(\lambda)+Dg(\lambda).$$
Summing the above equality  over all SYTs $T$ of shape $\lambda/\mu$
with $|\lambda/\mu|=n$, we have
\begin{align*}
A(n+1)&=A(n)+B(n).
\end{align*}
By iteration we obtain \eqref{eq:AAB}.
\end{proof}

\begin{example}
Let $g(\lambda)=1/H_\lambda$. Then $Dg(\lambda)=0$ by Lemma \ref{th:D+}. The two quantities defined in Lemma \ref{th:telescope} are:
$$
A(n)=\sum_{|\lambda/\mu|=n}  \frac{{f_{\lambda/ \mu}}}{H_\lambda}
\text{\qquad and\qquad}
B(n)=0.
$$
Consequently,
\begin{equation}\label{eq:skewhook}
\sum_{|\lambda/\mu|=n}  \frac{{f_{\lambda/ \mu}}}{H_\lambda} = \frac 1{H_\mu}.
\end{equation}
\end{example}
In particular, we derive the second identity in \eqref{eq:hookformula}
by letting $\mu=\emptyset$.

\medskip

\begin{thm}\label{th:main2}
Let $g$ be a function of partitions and $\mu$ be a given partition. Then we have
\begin{equation}\label{eq:main2}
\sum_{|\lambda/\mu|=n}f_{\lambda/\mu}g(\lambda)=\sum_{k=0}^n\binom{n}{k}D^kg(\mu)
\end{equation}
and
\begin{equation}\label{eq:main2b}
D^ng(\mu)=\sum_{k=0}^n(-1)^{n+k}\binom{n}{k}\sum_{|\lambda/\mu|=k}f_{\lambda/\mu}g(\lambda).
\end{equation}
In particular, if there exists some positive integer $r$ such that $D^{r+1} g(\lambda)=0$ for every partition $\lambda$, then the left-hand side of \eqref{eq:main2} is a polynomial of $n$ with degree at most $r$.
\end{thm}
\begin{proof}
First, we prove \eqref{eq:main2} by induction. The case $n=0$ is trivial. Assume that \eqref{eq:main2} is true for some nonnegative integer $n$. Then by the proof of Lemma \ref{th:telescope} we have
\begin{align*}
\sum_{|\lambda/\mu|=n+1}f_{\lambda/\mu}g(\lambda)&=
\sum_{|\nu/\mu|=n}f_{\nu/\mu}g(\nu)+
\sum_{|\nu/\mu|=n}f_{\nu/\mu}Dg(\nu)
\\&=\sum_{k=0}^n\binom{n}{k}D^kg(\mu)+\sum_{k=0}^n\binom{n}{k}D^{k+1}g(\mu)
\\&=\sum_{k=0}^{n+1}\binom{n+1}{k}D^kg(\mu).
\end{align*}

Finally, Identity \eqref{eq:main2b} is proved by the M\"obius inversion formula \cite{Rota1964}.
\end{proof}

\begin{example}\label{ex:big}
Let $g(\lambda)=\frac{\sum_{{\square}\in\lambda}h_{\square}^2}{H_\lambda}$, $\mu=(1)$, $n=0,1,2,3$ in Identity \eqref{eq:main2b}.  Note that 
\begin{align*}
	f_{(1)/(1)}&=1; \\
\ f_{(2)/(1)}&=1,  &\ f_{(11)/(1)}&=1,\\ 
						f_{(3)/(1)}&=1, &	\ f_{(111)/(1)}&=1,\ &f_{(21)/(1)}&=2, \\
	f_{(4)/(1)}&=1, &\ f_{(1111)/(1)}&=1,\ & f_{(31)/(1)}&=3,\ &f_{(211)/(1)}&=3,& f_{(22)/(1)}=2.
\end{align*}
Then we have 
\begin{align*}
	D^0 g(1) &= (-1)^{0+0}\binom{0}{0}f_{(1)/(1)}g(1)=g(1)=1;\\
D^1 g(1)&= (-1)^{1+1}\binom{1}{1}\left(f_{(2)/(1)}g(2)+f_{(11)/(1)}g(11)\right)+(-1)^{1+0}\binom{1}{0}f_{(1)/(1)}g(1)
\\&=g(2)+g(11)-g(1)
\\&=\frac{5}{2}+\frac{5}{2}-1=4;\\
D^2 g(1)&= (-1)^{2+2}\binom{2}{2}\left(f_{(3)/(1)}g(3)+f_{(111)/(1)}g(111)+f_{(21)/(1)}g(21)\right)
\\&\qquad+(-1)^{2+1}\binom{2}{1}\left(f_{(2)/(1)}g(2)+f_{(11)/(1)}g(11)\right)
\\&\qquad+(-1)^{2+0}\binom{1}{0}f_{(1)/(1)}g(1)
\\&
= g(3)+g(111)+2g(21)-2g(2)-2g(11)+g(1)
\\&
=
\frac{7}{3}+\frac{7}{3}+2\cdot\frac{11}{3}-2\cdot\frac{5}{2}-2\cdot\frac{5}{2}+1
=3;\\
D^3 g(1)&=(-1)^{3+3}\binom{3}{3}\bigl(f_{(4)/(1)}g(4)+f_{(1111)/(1)}g(1111)+f_{(31)/(1)}g(31)
\\&\qquad+f_{(211)/(1)}g(211)+f_{(22)/(1)}g(22)\bigr)
\\&\qquad +(-1)^{3+2}\binom{3}{2}\left(f_{(3)/(1)}g(3)+f_{(111)/(1)}g(111)+f_{(21)/(1)}g(21)\right)
\\&\qquad+(-1)^{3+1}\binom{3}{1}\left(f_{(2)/(1)}g(2)+f_{(11)/(1)}g(11)\right)
\\&\qquad+(-1)^{3+0}\binom{3}{0}f_{(1)/(1)}g(1)
\\&
=g(4)+g(1111)+3g(31)+3g(211)+2g(22)
\\&\qquad -3g(3)-3g(111)-6g(21)+3g(2)+3g(11)-g(1)
\\&
=
\frac{5}{4}+\frac{5}{4}+3\cdot\frac{11}{4}+3\cdot\frac{11}{4}+2\cdot\frac{3}{2}
\\&\qquad
-3\cdot\frac{7}{3}-3\cdot\frac{7}{3}-6\cdot\frac{11}{3}+3\cdot\frac{5}{2}+3\cdot\frac{5}{2}-1
=0.
\end{align*}
\end{example}

\section{Shifted parts of partitions} \label{sec:shifted} 
In this section, we will show that some certain functions related to shifted parts of partitions are $D$-polynomials, which is motivated by the work of the first author on hook lengths and symmetric functions \cite{han3}.

Suppose that $\lambda = (\lambda_1, \lambda_2, \ldots,
\lambda_\ell)$ is a partition with size $n$.  Let
$$
\varphi_\lambda(z)= \prod_{i=1}^n(z+n+\lambda_i-i),
$$
where $\lambda_i=0$ for $i\geq \ell+1$. The following theorem is the main result in this section.

\begin{thm}\label{th:shifted}
Suppose that $z$ is a formal parameter. For each partition $\lambda$ we have
$$
D\Bigl(\frac{\varphi_\lambda(z)}{H_\lambda}\Bigr)=\frac{z\varphi_\lambda(z+1)}{H_\lambda}.
$$
\end{thm}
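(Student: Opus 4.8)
The plan is to unfold the definition of $D$ and reduce the claim to a single polynomial identity in $z$, which I would then prove by a residue computation. Since the function in question is $\lambda\mapsto\varphi_\lambda(z)/H_\lambda$, the definition of $D$ gives directly
\[
D\Bigl(\frac{\varphi_\lambda(z)}{H_\lambda}\Bigr)=\sum_{\lambda^{+}}\frac{\varphi_{\lambda^{+}}(z)}{H_{\lambda^{+}}}-\frac{\varphi_\lambda(z)}{H_\lambda},
\]
so the theorem is equivalent to $\sum_{\lambda^{+}}\varphi_{\lambda^{+}}(z)/H_{\lambda^{+}}=\bigl(\varphi_\lambda(z)+z\,\varphi_\lambda(z+1)\bigr)/H_\lambda$. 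I would recast this through beta-numbers. Writing $b_i:=n+\lambda_i-i$ for $1\le i\le n$ (with $\lambda_i=0$ for $i>\ell$), these are $n$ distinct nonnegative integers with $\varphi_\lambda(z)=\prod_{i=1}^n(z+b_i)$, and adjoining a bead at $0$ makes $B:=\{0\}\cup\{b_i+1:1\le i\le n\}$ the size-$(n+1)$ beta-set of $\lambda$; the classical hook-length formula in beta-number form gives $H_\lambda=\prod_{c\in B}c!\big/\prod_{c>c'\in B}(c-c')$. The essential combinatorial input is that the partitions $\lambda^{+}$ correspond bijectively to the movable beads of $B$, i.e.\ to positions $c\in B$ with $c+1\notin B$, the new beta-set being $B_c:=(B\setminus\{c\})\cup\{c+1\}$.

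Next I would make each summand explicit. With $P(x):=\prod_{c\in B}(x-c)$, only the factorial of the moved bead and the Vandermonde factors touching it change, so $H_\lambda/H_{\lambda^{+}}=P(c+1)\big/\bigl((c+1)P'(c)\bigr)$. There are exactly two kinds of moves: the bead $c=0$, for which $\varphi_{\lambda^{+}}(z)=(z+1)\varphi_\lambda(z+1)$; and a bead $c=b_k+1$, for which $\varphi_{\lambda^{+}}(z)=z\,(z+b_k+2)\prod_{i\ne k}(z+b_i+1)$, where the factor $z$ comes from the bead that remains at $0$. The two coefficients $P(c+1)/((c+1)P'(c))$ simplify to $\prod_i b_i/\prod_i(b_i+1)$ and $\tfrac{1}{b_k+1}\prod_{i\ne k}\tfrac{b_k+1-b_i}{b_k-b_i}$, and non-movable beads contribute nothing because then $P(c+1)=0$. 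Hence, abbreviating $\varphi_+:=\varphi_\lambda(z+1)=\prod_i(z+b_i+1)$, the identity I must prove (after multiplying by $H_\lambda$) is
\[
\frac{\prod_i b_i}{\prod_i(b_i+1)}(z+1)\varphi_+ \;+\; z\,\varphi_+\sum_{k=1}^{n}\frac{1}{b_k+1}\,\frac{z+b_k+2}{z+b_k+1}\prod_{i\ne k}\frac{b_k+1-b_i}{b_k-b_i}\;=\;\varphi_\lambda(z)+z\,\varphi_+ .
\]

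Finally I would prove this by residues. The inner sum over $k$ is exactly $\sum_k\operatorname{Res}_{x=b_k}\mathcal G$ for
\[
\mathcal G(x):=\frac{1}{x+1}\cdot\frac{x+z+2}{x+z+1}\cdot\frac{\prod_{i=1}^n(x+1-b_i)}{\prod_{i=1}^n(x-b_i)},
\]
a proper rational function in $x$ decaying like $1/x$, whose only other finite poles are at $x=-1$ and $x=-z-1$. For generic $z$ all poles are simple, and since the sum of all residues (including at infinity) vanishes while $\operatorname{Res}_{x=\infty}\mathcal G=-1$, evaluating the two remaining residues yields $\sum_k\operatorname{Res}_{x=b_k}\mathcal G=1-\tfrac{z+1}{z}\,\tfrac{\prod_i b_i}{\prod_i(b_i+1)}+\tfrac{1}{z}\,\tfrac{\varphi_\lambda(z)}{\varphi_+}$. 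Substituting this into the display, the $\prod_i b_i/\prod_i(b_i+1)$ contributions cancel and the whole expression collapses to $\varphi_\lambda(z)+z\,\varphi_+$; as both sides are polynomials in $z$, the generic identity holds for all $z$. The main obstacle is the bookkeeping around the bead at $0$: it is responsible for the asymmetry between the two move-types and for the decisive factor $z$, and one must package the Lagrange-type sum so that the ``accidental'' poles of $\mathcal G$ at $x=-1$ and $x=-z-1$ reproduce precisely the two terms on the right-hand side.
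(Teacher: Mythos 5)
Your proof is correct, and it takes a genuinely different route from the paper's. The paper sets $\phi(z)=D\bigl(\varphi_\lambda(z)/H_\lambda\bigr)-z\varphi_\lambda(z+1)/H_\lambda$, observes that $\phi$ is a polynomial in $z$ of degree at most $n+1$, kills the two leading coefficients by the elementary facts behind Lemmas \ref{th:D+} and \ref{th:Dbino}, and then exhibits the $n$ distinct roots $z_i=i-\lambda_i-n-1$: at each root either all the relevant $\varphi$'s vanish (when no box is addable in row $i$), or exactly one summand survives and the claim reduces to the single hook-length ratio of Lemma \ref{th:shifted3}, which the paper imports from \cite{han3}. You instead prove the identity for all $z$ simultaneously: you encode $\lambda$ by the $(n+1)$-bead beta-set $B=\{0\}\cup\{b_i+1:1\le i\le n\}$, derive each ratio $H_\lambda/H_{\lambda^{+}}=P(c+1)/\bigl((c+1)P'(c)\bigr)$ directly from the beta-number form of the hook length formula (this ratio is exactly Lemma \ref{th:shifted3} in disguise, so your argument is self-contained where the paper quotes an external result), and then evaluate the sum over addable boxes in closed form by applying the sum-of-residues theorem to your rational function $\mathcal{G}$. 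I verified the computation: the two move-type weights, the residues at $x=-1$, $x=-z-1$ and $x=\infty$, and the final cancellation are all correct, and you close the two genuine gaps yourself --- immovable beads carry weight zero because $P(c+1)=0$ (so the sum may harmlessly run over all beads, including the bead at $0$ when $\lambda$ has fewer than $n$ parts), and genericity in $z$ suffices since both sides are polynomials in $z$. As for what each approach buys: the paper's proof is shorter given \cite{han3} and avoids computing any summand in closed form, at the price of an interpolation argument that verifies the identity one root at a time; yours makes the mechanism transparent as a single partial-fraction (Lagrange-type) identity, and it is methodologically the beta-number twin of the paper's own Lemma \ref{th:Hlambda+1}, whose proof uses the same vanishing-polynomial/partial-fraction device with inner and outer corner contents in place of beta-numbers --- so your method would extend naturally to the corner formalism of Section \ref{sec:Dpoly}.
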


Theorem \ref{th:shifted} has several direct corollaries.

\begin{cor} \label{th:shifted1} 
Suppose that $z$ is a formal parameter and $r$ is a nonnegative integer. For each partition $\lambda$ we have
$$
D^{r+1}\Bigl(\frac{\varphi_\lambda(z)}{H_\lambda}\Bigr)=\frac{z(z+1)\cdots(z+r)\varphi_\lambda(z+r+1)}{H_\lambda}.
$$
In particular,  $\varphi_\lambda(-r)$ is a $D$-polynomial with degree at most $r$, or equivalently,
$$
D^{r+1}\Bigl(\frac{\varphi_\lambda(-r)}{H_\lambda}\Bigr)=0.
$$
\end{cor}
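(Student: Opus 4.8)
The plan is to obtain the displayed identity by a straightforward induction on $r$, using Theorem~\ref{th:shifted} as the engine, and then to deduce the ``in particular'' statement by specializing $z=-r$. The base case $r=0$ is exactly Theorem~\ref{th:shifted}: the product $z(z+1)\cdots(z+r)$ collapses to the single factor $z$, and $\varphi_\lambda(z+r+1)$ becomes $\varphi_\lambda(z+1)$.

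For the inductive step I would assume the formula for $r$ and compute $D^{r+2}=D\circ D^{r+1}$. The key observation is that the prefactor $z(z+1)\cdots(z+r)$ is independent of the partition $\lambda$; hence, by linearity of $D$, it factors out of the operator, leaving $D\bigl(\varphi_\lambda(z+r+1)/H_\lambda\bigr)$. Applying Theorem~\ref{th:shifted} with the formal parameter $z$ replaced by $z+r+1$ turns this into $(z+r+1)\varphi_\lambda(z+r+2)/H_\lambda$; restoring the prefactor yields $z(z+1)\cdots(z+r)(z+r+1)\varphi_\lambda(z+r+2)/H_\lambda$, which is precisely the claim with $r$ advanced to $r+1$. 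This closes the induction.

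Finally, I would substitute $z=-r$ into the identity just established. Since $D$ acts only on the partition variable and treats $z$ as an inert constant, for each fixed $\lambda$ the expression $D^{r+1}\bigl(\varphi_\lambda(z)/H_\lambda\bigr)$ is a finite integer combination of terms $\varphi_{\lambda'}(z)/H_{\lambda'}$ whose coefficients do not involve $z$, so the specialization commutes with the operator. On the right-hand side the factor $(z+r)$ of the product vanishes at $z=-r$, forcing the whole expression to be $0$; thus $D^{r+1}\bigl(\varphi_\lambda(-r)/H_\lambda\bigr)=0$ for every partition~$\lambda$, which is exactly the assertion that $\varphi_\lambda(-r)$ is a $D$-polynomial of degree at most $r$. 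The computation is routine throughout; the only step that asks for a word of justification rather than algebra is this commuting of the substitution $z=-r$ with $D$, and because $D$ is a fixed, $z$-free linear recipe on partitions, even that presents no real obstacle.
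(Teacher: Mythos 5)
Your proof is correct and is exactly the argument the paper intends: the corollary is stated as a ``direct corollary'' of Theorem~\ref{th:shifted}, and iterating that theorem by induction on $r$ (pulling the $\lambda$-independent prefactor $z(z+1)\cdots(z+r)$ through $D$ by linearity, then applying the theorem with $z$ replaced by $z+r+1$) is precisely the omitted routine verification. Your extra remark that the specialization $z=-r$ commutes with $D$ because $D$ is a $z$-free linear operator on partition functions is a sound and welcome touch of care.
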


By Corollary \ref{th:shifted1} and Theorem \ref{th:main2} we obtain

\begin{cor}
\label{th:shifted2} Suppose that $r$ is a nonnegative integer and $\mu$ is a given partition. Then we have
\begin{equation}\label{eq:shifted}
\sum_{|\lambda/\mu|=n}f_{\lambda/\mu}\frac{\varphi_\lambda(-r)}{H_\lambda}=
\sum_{k=0}^{r}\binom{n}{k}D^k\Bigl(\frac{\varphi_\mu(-r)}{H_\mu}\Bigr)
\end{equation} 
is a polynomial of $n$ with degree at most $r$.
\end{cor}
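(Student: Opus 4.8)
The plan is to apply Theorem~\ref{th:main2} directly to the single function $g(\lambda):=\varphi_\lambda(-r)/H_\lambda$. First I would substitute this choice of $g$ into identity~\eqref{eq:main2}. Since $g(\lambda)$ is by definition $\varphi_\lambda(-r)/H_\lambda$, the left-hand side of~\eqref{eq:main2} becomes the left-hand side of~\eqref{eq:shifted} verbatim, and the right-hand side becomes
$$\sum_{|\lambda/\mu|=n}f_{\lambda/\mu}\frac{\varphi_\lambda(-r)}{H_\lambda}=\sum_{k=0}^n\binom{n}{k}D^k\Bigl(\frac{\varphi_\mu(-r)}{H_\mu}\Bigr).$$
So after this one substitution the only remaining task is to control the range of summation and the dependence on $n$.

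Next I would invoke Corollary~\ref{th:shifted1}, which asserts that $D^{r+1}(\varphi_\lambda(-r)/H_\lambda)=0$ for every partition $\lambda$; specializing to $\lambda=\mu$ gives $D^k g(\mu)=0$ for all $k\geq r+1$. Hence every term with index $k>r$ on the right vanishes, and the upper limit of summation may be lowered from $n$ to $r$. I would point out that this replacement is legitimate irrespective of the relative size of $n$ and $r$: when $n\geq r$ the dropped terms are zero by Corollary~\ref{th:shifted1}, while when $n<r$ the additional terms introduced by writing $\sum_{k=0}^{r}$ carry binomial coefficients $\binom{n}{k}=0$. Either way $\sum_{k=0}^{n}$ and $\sum_{k=0}^{r}$ coincide, which yields exactly the stated identity~\eqref{eq:shifted}.

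Finally, to read off polynomiality and the degree bound, I would observe that each coefficient $D^k(\varphi_\mu(-r)/H_\mu)$ depends only on the fixed data $\mu$ and $r$, hence is a constant independent of $n$, whereas $\binom{n}{k}$ is a polynomial in $n$ of degree $k$. Summing over $0\leq k\leq r$ therefore produces a polynomial in $n$ of degree at most $r$, the maximal possible degree being contributed by the $k=r$ term.

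I do not anticipate a genuine obstacle here: the corollary is a formal specialization of Theorem~\ref{th:main2}, with the essential finiteness of the expansion supplied by the vanishing statement of Corollary~\ref{th:shifted1}. The only step demanding a moment's care is the bookkeeping of the summation range when $n<r$, and this is dispatched cleanly by the vanishing of the binomial coefficients; all substantive content has already been established in the two cited results.
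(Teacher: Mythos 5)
Your proposal is correct and takes essentially the same route as the paper, which states the corollary as an immediate consequence of Theorem~\ref{th:main2} (applied with $g(\lambda)=\varphi_\lambda(-r)/H_\lambda$) and the vanishing $D^{r+1}\bigl(\varphi_\lambda(-r)/H_\lambda\bigr)=0$ from Corollary~\ref{th:shifted1}, exactly as you argue. Your explicit bookkeeping for the case $n<r$ via $\binom{n}{k}=0$ is a detail the paper leaves implicit, but it introduces no divergence in method.
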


To prove Theorem \ref{th:shifted}, we need the following lemma proved by the first author in \cite {han3}.

\begin{lem}[(2.2) of \cite {han3}] \label{th:shifted3}
 Suppose that $\lambda$ is a partition and $\lambda_i>\lambda_{i+1}$ for some integer $i$.
Then
$$
 \frac{H_\lambda}{
H_{\lambda^*}}=\frac{\prod_{j=1}^n(i-\lambda_i+1+\lambda_j-j)}
{\prod_{j=1}^{n-1}(i-\lambda_i+\lambda^*_j-j)},
$$
where $\lambda^*$ is obtained from $\lambda$ by removing a box from the $i$-th row.
\end{lem}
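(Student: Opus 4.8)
The plan is to prove the ratio formula by passing to the \emph{shifted parts} of $\lambda$ and $\lambda'$, for which the deletion of a single box has a transparent effect. Fix the value $N=n=|\lambda|$ and set $\beta_j:=n+\lambda_j-j$ for $1\le j\le n$ (with $\lambda_j=0$ for $j>\ell$); these are exactly the quantities appearing as roots of $\varphi_\lambda$, and they form a strictly decreasing sequence $\beta_1>\beta_2>\cdots>\beta_n\ge 0$. I would first record the classical Frobenius evaluation of the number of standard Young tableaux, $f_\lambda=\frac{n!}{\prod_{j=1}^n\beta_j!}\prod_{1\le j<k\le n}(\beta_j-\beta_k)$ (see \cite{ec2}); combined with the hook length formula $f_\lambda=n!/H_\lambda$ from \eqref{eq:hookformula}, this yields the product form
$$H_\lambda=\frac{\prod_{j=1}^n\beta_j!}{\prod_{1\le j<k\le n}(\beta_j-\beta_k)}.$$

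Next I would apply this identity to both $\lambda$ and $\lambda'$, using the same index range $1\le j\le n$ for each (legitimate since $\ell(\lambda')\le\ell(\lambda)\le n$). Because $\lambda'$ is obtained by deleting the box in row $i$, its shifted parts $\beta'_j:=n+\lambda'_j-j$ satisfy $\beta'_j=\beta_j$ for $j\ne i$ and $\beta'_i=\beta_i-1$. Forming the quotient $H_\lambda/H_{\lambda'}$, all factorials cancel except $\beta_i!/(\beta_i-1)!=\beta_i$, and in the two Vandermonde products only the factors involving the index $i$ survive. A short check of the two cases $j<i$ and $j>i$ shows each surviving factor has the same shape (for $j>i$ one rewrites $\tfrac{\beta_i-\beta_k-1}{\beta_i-\beta_k}$ as $\tfrac{\beta_k-\beta_i+1}{\beta_k-\beta_i}$), giving
$$\frac{H_\lambda}{H_{\lambda'}}=\beta_i\prod_{\substack{1\le j\le n\\ j\ne i}}\frac{\beta_j-\beta_i+1}{\beta_j-\beta_i}.$$

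Finally I would translate this back into the variables $\lambda_j,\lambda'_j$ to match the statement. Directly $\beta_j-\beta_i+1=i-\lambda_i+1+\lambda_j-j$ and, for $j\ne i$, $\beta_j-\beta_i=i-\lambda_i+\lambda'_j-j$; moreover the $j=i$ factor $i-\lambda_i+1+\lambda_i-i=1$ lets me extend the numerator product to all $1\le j\le n$, recovering $\prod_{j=1}^n(i-\lambda_i+1+\lambda_j-j)$. The denominator needs a touch more care because of the stray factor $\beta_i$ and the range shift from $n$ to $n-1$: I would split off the top index $j=n$, using that $\beta_n=\lambda_n$ equals $0$ when $i<n$ (forced, since then $\lambda\ne(1^n)$, so $\ell<n$) and equals $1$ in the remaining case $i=n$ (where $\lambda=(1^n)$). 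In the first case $\beta_i/(\beta_n-\beta_i)=-1$ supplies precisely the $j=i$ factor $i-\lambda_i+\lambda'_i-i=-1$ of the target denominator, collapsing the product to $\prod_{j=1}^{n-1}(i-\lambda_i+\lambda'_j-j)$; the case $i=n$ is immediate since the two index ranges already coincide and $\beta_i=1$. This bookkeeping — matching the symmetric $n$-term ratio against the asymmetric $n$-versus-$(n-1)$ products of the statement, with the factor $\beta_i$ and the sign landing in the right place — is the only delicate point, while everything else is routine cancellation.
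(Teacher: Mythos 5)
Your proof is correct. Note that the paper itself contains no proof of this lemma---it is imported verbatim as identity (2.2) of \cite{han3}---so there is no in-paper argument to compare against; your derivation, via the product formula $H_\lambda=\prod_{j=1}^n\beta_j!\,/\prod_{1\le j<k\le n}(\beta_j-\beta_k)$ obtained from the Frobenius evaluation of $f_\lambda$ together with \eqref{eq:hookformula}, is the standard route and is essentially the computation underlying the cited source. All steps check out: the cancellation to $\frac{H_\lambda}{H_{\lambda'}}=\beta_i\prod_{j\ne i}\frac{\beta_j-\beta_i+1}{\beta_j-\beta_i}$ is right (the hypothesis $\lambda_i>\lambda_{i+1}$ is exactly what keeps $\beta'_1>\beta'_2>\cdots>\beta'_n\ge 0$ strictly decreasing, and it also forces $\lambda_i\ge 1$ so the removal is legitimate), and you correctly identified and handled the one genuinely delicate point: when $i<n$ the hypothesis rules out $\lambda=(1^n)$, hence $\ell<n$ and $\beta_n=0$, so $\beta_i/(\beta_n-\beta_i)=-1$ supplies precisely the $j=i$ denominator factor $\lambda'_i-\lambda_i=-1$ and collapses the range to $1\le j\le n-1$, while for $i=n$ one has $\lambda=(1^n)$, $\beta_i=1$, and the ranges already agree. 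A spot check on $\lambda=(2,1)$, $i=1$ confirms the bookkeeping: both sides give $3/2$.
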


\begin{proof}[Proof of Theorem \ref{th:shifted}]
Let
$$
\phi(z)=
D\Bigl(\frac{\varphi_\lambda(z)}{H_\lambda}\Bigr)-\frac{z\varphi_\lambda(z+1)}{H_\lambda}.
$$
It is easy to see that $\phi(z)$ is a polynomial of $z$ with degree at most $n+1=|\lambda|+1$. Furthermore,
$$
[z^{n+1}]\ \phi(z) = \sum_{\lambda^+}\frac{1}{H_{\lambda^+}}-
\frac{1}{H_\lambda}=0
$$
and
$$
[z^{n}]\ \phi(z) =
\sum_{\lambda^+}\frac{\binom{n+2}{2}}{H_{\lambda^+}}-
\frac{1}{H_\lambda}-\frac{\binom{n+1}{2}+n}{H_\lambda}=0.
$$
This means that $\phi(z)$ is a polynomial of $z$ with degree at most $n-1$.  To show that $\phi(z)=0$, we just need to find $n$ distinct roots for $\phi(z)$. Let $z_i=i-\lambda_i-n-1$ for $1\leq i \leq n$. We will show that $\phi(z_i)=0$.

If $\lambda_i = \lambda_{i-1}$, we know the factor $z+n+1+\lambda_i-i$ lies in $\varphi_{\lambda^+}(z)$ since we can not add a box in $i$-th row to $\lambda$ and thus $\varphi_{\lambda^+}(z_i)=0$. For similar reasons, for all $1\leq i\leq n$ we have $\varphi_\lambda(z_i)=\varphi_\lambda(z_i+1)=0$, which means that $\phi(z_i)=0$.

If $\lambda_i+1 \leq \lambda_{i-1}$, we can add a box in $i$-th row to $\lambda$. First we also have $\varphi_\lambda(z_i+1)=0$ since $z_i+n+1+\lambda_i-1=0.$ To show  $\phi(z_i)=0,$ we just need to show $D(\frac{\varphi_\lambda(z_i)}{H_\lambda})=0,$ or equivalently,
$$
\sum_{\lambda^+}\frac{H_\lambda}{H_{\lambda^+}}
\varphi_{\lambda^+}(z_i)=\varphi_{\lambda}(z_i).
$$
It is easy to see that only one  term on the left side of last identity is not $0$. Thus we just need to show that
$$
\frac{H_\lambda}{H_{\lambda^{**}}}
\varphi_{\lambda^{**}}(z_i)=\varphi_{\lambda}(z_i),
$$
where $\lambda^{**}$ is obtained by adding a box to $\lambda$ in $i$-th row. But the last identity is equivalent to Lemma \ref{th:shifted3}. The proof is complete.

\end{proof}
\section{$D$-polynomials from the work of Carde-Loubert-Potechin-Sanborn} \label{sec:carde}
In this section, we derive some $D$-polynomials arising from the work of  Carde,  Loubert,  Potechin and  Sanborn \cite{clps} on one of the first author's conjecture \cite{han} related to hook lengths of partitions.  Furthermore, the degrees of such $D$-polynomials can be explicitly determined. As an application of Theorem \ref{th:main2} and Lemma \ref{th:DD-}, we obtain the skew marked hook length formula (see Theorem \ref{th:skewmarked}).

Let $z$ be a formal parameter and $\rho(h,z)$ be the function defined on each positive integer~$h$ (see \cite{clps, han}):
\begin{align*}
    \rho(h,z) &:=
\frac{(1+\sqrt z)^{h}+(1-\sqrt z)^{h}}{(1+\sqrt z)^{h}-(1-\sqrt z)^{h}}
 \cdot h{\sqrt z}\\
 &=\frac{ h \sum_{k
\geq 0} \binom{h}{2k} z^k} { \sum_{k \geq 0} \binom{h}{2k+1} z^k}\\
&=1 + \frac {h^2-1}3 z-  \frac {(h^2-1)(h^2-4)}{45} z^2 +  \frac
{(h^2-1)(h^2-4)(2h^2-11)}{945} z^3 + \cdots.
\end{align*}
\begin{defi}
The functions  $L_k(\lambda)$ of partitions are defined by the following generating function
$$
\prod_{\square\in\lambda} \rho(h_\square,z)=\sum_{k\geq
0}L_k(\lambda)z^k.
$$
\end{defi}
For example, we have
$$
L_0(\lambda)=1 \text{\quad and\quad}
L_1(\lambda)=\frac{1}{3}\sum_{\square\in\lambda}(h_\square^2-1)
=\frac {S(\lambda,1)}3.
$$

For $i=2r-1,2r,2r+1$, $D^{i}\Bigl(\frac{L_{r}(\lambda)}{H_\lambda}\Bigr)$ has an explicit expression.

\begin{thm}\label{th:DL}
For each partition $\lambda$ we have
\begin{align}
    D^{2r+1}\Bigl(\frac{L_{r}(\lambda)}{H_\lambda}\Bigr)&=0,  &(r\geq 0)\label{eq:DL+1}\\
    D^{2r}\Bigl(\frac{L_{r}(\lambda)}{H_\lambda}\Bigr)&=
    \frac{(2r-1)!!}{H_\lambda}, & (r\geq 1) \label{eq:DL+0}\\
    D^{2r-1}\Bigl(\frac{L_{r}(\lambda)}{H_\lambda}\Bigr)&=
    \frac{(2r-1)!!}{H_\lambda} |\lambda|. &(r\geq 1) \label{eq:DL-1}
\end{align}
\end{thm}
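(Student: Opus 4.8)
The plan is to reduce all three identities to a single second-order recursion and then to localize the remaining work in one generating-function identity. Write $\Phi_\lambda(z):=\prod_{\square\in\lambda}\rho(h_\square,z)=\sum_{k\ge 0}L_k(\lambda)\,z^k$. I claim it suffices to prove
\begin{equation*}
D^2\Bigl(\frac{L_r(\lambda)}{H_\lambda}\Bigr)=(2r-1)\,\frac{L_{r-1}(\lambda)}{H_\lambda}\qquad(r\ge 1),\tag{$\star$}
\end{equation*}
equivalently, after multiplying by $z^r$ and summing over $r$, the operator identity
\begin{equation*}
H_\lambda\,D^2\Bigl(\frac{\Phi_\lambda(z)}{H_\lambda}\Bigr)=\bigl(2z^2\partial_z+z\bigr)\,\Phi_\lambda(z).\tag{$\star\star$}
\end{equation*}
Granting $(\star)$, the theorem follows by induction on $r$: applying $D^{2r-1}$, $D^{2r-2}$ and $D^{2r-3}$ to $(\star)$ and invoking the inductive hypotheses for $L_{r-1}$ yields \eqref{eq:DL+1}, \eqref{eq:DL+0} and \eqref{eq:DL-1} respectively, the constants matching because $(2r-1)(2r-3)!!=(2r-1)!!$. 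The base cases are immediate: $L_0=1$ gives $D(1/H_\lambda)=0$ by Lemma \ref{th:D+}, and for $r=1$ we have $L_1=S(\lambda,1)/3$, so the three identities at $r=1$ are Lemma \ref{th:DS} (with $r=1$, $K_1=3$) divided by $3$.

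The first two identities can in fact be proved directly, which both confirms the leading behaviour of $(\star)$ and pins down the constant $(2r-1)!!$. Expanding the logarithm gives $\log\rho(h,z)=\sum_{j\ge1}\ell_j(h^2)\,z^j$ with $\ell_j$ of degree $j$ in $h^2$ and $\ell_1(h^2)=(h^2-1)/3$, so $\Phi_\lambda(z)=\exp\bigl(\sum_{j}C_j(\lambda)z^j\bigr)$ with $C_j(\lambda)=\sum_\square\ell_j(h_\square^2)$ a linear combination of the power sums $p_i(h_\square^2:\square\in\lambda)$ and of $|\lambda|$ for $i\le j$. Hence $L_r$ is a linear combination of products $p_\nu(h_\square^2:\square\in\lambda)$, times powers of $|\lambda|$, each of total weight at most $2r$ (the weight of $p_\nu$ being $|\nu|+\ell(\nu)$ and that of $|\lambda|$ being $1$). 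By Theorem \ref{th:main1} and Lemma \ref{th:Dkbino} every such term is a $D$-polynomial of degree at most its weight, which gives \eqref{eq:DL+1}; moreover weight $2r$ is attained only by $p_1^r$, whose coefficient in $L_r$ is $1/(3^r r!)$ (coming solely from $C_1^r/r!$). Therefore
\[
D^{2r}\Bigl(\frac{L_r(\lambda)}{H_\lambda}\Bigr)=\frac{1}{3^r r!}\,D^{2r}\Bigl(\frac{p_1^r}{H_\lambda}\Bigr),\qquad p_1=\sum_{\square\in\lambda}h_\square^2=S(\lambda,1)+|\lambda|.
\]
Since $D^2(p_1/H_\lambda)=3/H_\lambda$ and $D^3(p_1/H_\lambda)=0$ (from Lemma \ref{th:DS} with $r=1$ and Lemma \ref{th:Dbino}), a top-order application of the Leibniz rule (Lemma \ref{th:leibniz}) to the $r$ equal factors $p_1$ produces the multinomial factor $(2r)!/(2!)^r=r!\,(2r-1)!!$, so that $D^{2r}(p_1^r/H_\lambda)=3^r r!\,(2r-1)!!/H_\lambda$ and \eqref{eq:DL+0} follows. (As a check, the resulting constant is the leading coefficient of the Plancherel moment $\frac1{n!}\sum_{|\lambda|=n}f_\lambda^2(\sum_\square h_\square^2)^r$.)

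What remains --- and what I expect to be the main obstacle --- is \eqref{eq:DL-1}, equivalently the validity of $(\star\star)$ to all orders rather than merely in its top term; this does not follow from Theorem \ref{th:main1} alone, since $D^{2r-1}(L_r/H_\lambda)$ also receives contributions from the weight-$(2r-1)$ part of $L_r$. Here I would use the explicit structure of $\rho$ from \cite{clps,han}: with $q=(1-\sqrt z)/(1+\sqrt z)$ one has $\rho(h,z)=h\sqrt z\,(1+q^{h})/(1-q^{h})$, so that $\Phi_\lambda(z)$, via the beta-set factorization of $\prod_\square(1-q^{h_\square})$, becomes a ratio of Vandermonde-type products in the shifted parts of $\lambda$. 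Adding or removing a single box shifts one such part by one, making $D$ and $D^-$ act in a controlled way; combining Lemma \ref{th:leibniz1} with the commutation relation $DD^--D^-D=D$ of Lemma \ref{th:DD-} is what I expect to force the two applications of $D$ in $D^2$ to assemble into the operator $2z^2\partial_z+z$ of $(\star\star)$ and, in particular, to fix the $|\lambda|$-linear term required by \eqref{eq:DL-1}. Once $(\star\star)$ is established the induction of the first paragraph closes the proof.
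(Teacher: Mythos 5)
Your reduction fails at its first step: the claimed identity $(\star)$, $D^2\bigl(L_r(\lambda)/H_\lambda\bigr)=(2r-1)L_{r-1}(\lambda)/H_\lambda$, is false for $r\geq 2$, and hence so is $(\star\star)$ beyond its $z$-linear coefficient. The Carde--Loubert--Potechin--Sanborn relation (Lemma \ref{th:carde}) gives a \emph{first}-order identity in the $z$-grading involving $D^-$, namely \eqref{eq:d*}: $D(L_k/H_\lambda)=|\lambda|\,L_{k-1}/H_\lambda-D^-(L_{k-1}/H_\lambda)$. Applying $D$ once more, with $n=|\lambda|$, using $D(ng)=(n+1)Dg+g$, the commutator $DD^-=D^-D+D$ (Lemma \ref{th:DD-}), $D(L_1/H_\lambda)=n/H_\lambda$ and $D^-(n/H_\lambda)=n/H_\lambda$ (Lemma \ref{th:Dbino}), one finds
\begin{equation*}
D^2\Bigl(\frac{L_2(\lambda)}{H_\lambda}\Bigr)=\frac{L_1(\lambda)+n(n-1)}{H_\lambda},
\end{equation*}
so $(\star)$ at $r=2$ would force $L_1(\lambda)=\binom{n}{2}$ \emph{pointwise}. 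This fails already at $\lambda=(3)$, where $L_1=\tfrac13\bigl(8+3+0\bigr)=\tfrac{11}{3}\neq 3$: the true value is $D^2(L_2/H)\big|_{(3)}=29/18$, while $(\star)$ predicts $11/6$. Your spot checks at $\emptyset$, $(1)$, $(2)$ and the $r=1$ case pass only because the discrepancy $\bigl(2\binom n2-2L_1(\lambda)\bigr)/H_\lambda$ is a $D$-constant --- precisely the remark following Lemma \ref{th:D-} in the paper --- so $(\star)$ holds at those special points and ``after enough applications of $D$'', but not as an identity; the induction you build on it therefore has no valid starting point. Separately, your argument for \eqref{eq:DL+0} via a ``top-order application of the Leibniz rule'' to $p_1^r$ asserts, without proof, that iterates of $D$ on a product factor multinomially through the leading terms; $D^k$ of a product does not split this way, and making such a leading-term calculus rigorous would itself require the $q_\nu$-machinery of Theorems \ref{th:dqnu} and \ref{th:SDpoly}. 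Finally, you explicitly leave \eqref{eq:DL-1} open, so even granting the above, the proposal is incomplete.

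The paper's actual route avoids any closed second-order formula: it iterates \eqref{eq:d*} into Lemma \ref{th:DrL},
\begin{equation*}
D^{r}\Bigl(\frac{L_k(\lambda)}{H_\lambda}\Bigr)=n\,D^{r-1}\Bigl(\frac{L_{k-1}(\lambda)}{H_\lambda}\Bigr)+(r-1)D^{r-2}\Bigl(\frac{L_{k-1}(\lambda)}{H_\lambda}\Bigr)-D^-D^{r-1}\Bigl(\frac{L_{k-1}(\lambda)}{H_\lambda}\Bigr),
\end{equation*}
proved by induction from Lemmas \ref{th:Dkbino} and \ref{th:DD-}, and only then inducts on $r$. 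The point you missed is that the troublesome $D^-$ term need not be evaluated in general: at orders $2r-1$, $2r$, $2r+1$ the inductive hypotheses place $D^{2r-1}(L_r/H_\lambda)$ and $D^{2r}(L_r/H_\lambda)$ in the span of $n/H_\lambda$ and $1/H_\lambda$, exactly where $D^-$ acts diagonally ($D^-(1/H_\lambda)=0$, $D^-(n/H_\lambda)=n/H_\lambda$, Lemmas \ref{th:D-} and \ref{th:Dbino}); the constant then propagates as $(2r+1)!!=(2r+1)\cdot(2r-1)!!$ from the middle term of Lemma \ref{th:DrL}. If you want to salvage your plan, replace $(\star\star)$ by the correct first-order generating-function identity $D(\Phi_\lambda(z)/H_\lambda)=z\bigl(|\lambda|\,\Phi_\lambda(z)/H_\lambda-D^-(\Phi_\lambda(z)/H_\lambda)\bigr)$ and then control $D^-$ through the commutator as above.
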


Recall the following result obtained in \cite{clps}, which will be used in the proof of Theorem \ref{th:DL}.

\begin{lem} [Carde-Loubert-Potechin-Sanborn \cite{clps}]\label{th:carde}
For each partition $ \lambda $ we have
$$
\sum_{\lambda^{+}} w(\lambda^{+}) = w(1) w(\lambda) +
\sum_{\lambda^-} w(\lambda^-),
$$
where
$$
w(\lambda) =
\prod_{\square\in\lambda} \frac{\rho(h_\square, z)}{h_\square \sqrt z}.
$$
\end{lem}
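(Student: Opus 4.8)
Write $g(h):=\rho(h,z)/(h\sqrt z)=\frac{u^{h}+v^{h}}{u^{h}-v^{h}}$ with $u=1+\sqrt z$ and $v=1-\sqrt z$, so that $w(\lambda)=\prod_{\square\in\lambda}g(h_\square)$ and $w(1)=g(1)=1/\sqrt z$. The plan is to pass to the beta-set (Maya diagram) encoding of $\lambda$: record the bead set $X=\{\lambda_i-i:i\ge 1\}\subset\mathbb Z$, calling a position $p$ a \emph{bead} if $p\in X$ and a \emph{hole} otherwise (for $\lambda=\emptyset$ one has $X=\mathbb Z_{<0}$). In this picture adding, respectively removing, a box is moving a single bead one step right, respectively left, into an adjacent hole: the addable positions $m$ are those with a bead at $m-1$ and a hole at $m$, and the removable positions $m'$ are those with a hole at $m'-1$ and a bead at $m'$. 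I will use the standard fact that the multiset of hook lengths equals $\{\,y-x:y\in X\text{ a bead},\ x\notin X\text{ a hole},\ x<y\,\}$, a finite set of size $|\lambda|$, so that $w(\lambda)$ is a finite product over bead--hole inversions. Setting $R_p:=(u/v)^{p}$, a one-line computation gives $g(y-x)=\frac{R_y+R_x}{R_y-R_x}$ (the prefactors cancel), hence
\[
w(\lambda)=\prod_{\substack{x\ \text{hole},\ y\ \text{bead}\\ R_x<R_y}}\frac{R_y+R_x}{R_y-R_x}.
\]

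Next I would compute the local effect of moving one bead. Since only the inversions pairing the moved bead, or the hole it vacates, with the remaining positions are affected, all other factors cancel in the ratios. Writing $\lambda^{+}_m$ (resp. $\lambda^{-}_{m'}$) for the partition obtained at the addable position $m$ (resp. removable position $m'$) and introducing
\[
\Psi(p):=\prod_{\substack{x\ \text{hole}\\ R_x<R_p}}\frac{R_p+R_x}{R_p-R_x}\ \prod_{\substack{y\ \text{bead}\\ R_y>R_p}}\frac{R_y-R_p}{R_y+R_p},
\]
I expect the clean formulas
\[
\frac{w(\lambda^{+}_m)}{w(\lambda)}=g(1)\,\frac{\Psi(m)}{\Psi(m-1)},\qquad \frac{w(\lambda^{-}_{m'})}{w(\lambda)}=g(1)\,\frac{\Psi(m'-1)}{\Psi(m')}.
\]
Dividing the asserted identity by $w(\lambda)$ and using $w(1)=g(1)$, the lemma becomes the purely combinatorial statement
\[
\sum_{m\ \text{addable}}\frac{\Psi(m)}{\Psi(m-1)}-\sum_{m'\ \text{removable}}\frac{\Psi(m'-1)}{\Psi(m')}=1,
\]
which I will call $(\star)$.

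The algebraic engine for $(\star)$ is the addition law $g(a+b)\bigl(g(a)+g(b)\bigr)=g(a)g(b)+1$ (equivalently $g(h)=\coth(h\theta)$ with $e^{-2\theta}=v/u$). Already $\lambda=\emptyset$ makes $(\star)$ trivial, and $\lambda=(1)$ reduces it to $2g(1)g(2)=g(1)^2+1$, a direct instance of this law. For general $\lambda$ I would treat the left-hand side of $(\star)$ as a Laurent-rational function of $r=v/u$ (regarding $z$ as a small positive real and extending the resulting rational identity formally): its only candidate poles sit at coincidences $R_x=R_y$ among the finitely many defect positions, I would cancel the residue at each coincidence by pairing the neighbouring addable and removable contributions (which interlace, since $R$ is monotone) through the addition law, and finally pin the constant to $1$ from the $r\to 0$ limit, where every contribution but the outermost collapses and the normalization $g(1)=1/\sqrt z$ fixes the value.

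The crux, and the step I expect to be hardest, is exactly this global cancellation: the products defining $\Psi$ do not telescope term by term, so $(\star)$ cannot be reduced to a single local identity, and controlling the residues uniformly in the number of defects (equivalently, the Frobenius rank of $\lambda$) is where the real combinatorial bookkeeping lies. An alternative I would keep in reserve is to feed the reformulation $Dw(\lambda)+D^-w(\lambda)=\bigl(g(1)-1+|\lambda|\bigr)w(\lambda)$, immediate from the definitions of $D$ and $D^-$, into the Leibniz-type rules of Lemma~\ref{th:leibniz}, which may organize the same cancellation more systematically.
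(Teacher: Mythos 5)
Your reduction is correct as far as it goes, and it is worth saying so precisely: the beta-set encoding, the identification of the hook-length multiset with bead--hole inversions, the factorization $g(y-x)=\frac{R_y+R_x}{R_y-R_x}$, and both ratio formulas $w(\lambda^{+}_m)/w(\lambda)=g(1)\,\Psi(m)/\Psi(m-1)$ and $w(\lambda^{-}_{m'})/w(\lambda)=g(1)\,\Psi(m'-1)/\Psi(m')$ all check out, so the lemma is indeed equivalent to your identity $(\star)$. (Note that the paper itself offers no proof of this lemma --- it simply cites \cite{clps} --- so the only benchmark is that reference.) The problem is that $(\star)$ is the entire content of the lemma, and your plan for it is both unexecuted and, as sketched, unlikely to work in the form stated. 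Viewing the left-hand side of $(\star)$ as a function of the single variable $r=v/u$, the singularities do not sit at isolated ``coincidences $R_x=R_y$'': since $R_p=(u/v)^p$, the factor $R_y-R_x$ vanishes at \emph{every} root of unity of order dividing $y-x$, distinct pairs with commensurable differences share poles, and poles can be of higher order. So ``cancel the residue at each coincidence by pairing the neighbouring addable and removable contributions'' does not localize, and controlling this uniformly in the Frobenius rank --- which you yourself flag as the crux --- is genuinely missing. Your reserve plan is no better off: the identity $Dw+D^-w=(g(1)-1+|\lambda|)\,w$ is just the lemma rewritten, so feeding it into Lemma~\ref{th:leibniz} is circular as a proof strategy.

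The standard way to close the gap --- and the route of \cite{clps} --- is to first telescope your $\Psi$-ratios down to the corner contents: runs of consecutive holes (resp.\ beads) telescope, and in the notation of Section~\ref{sec:Dpoly}, applying Lemma~\ref{th:Hlambda+} multiplicatively to $g(h)=\frac{u^h+v^h}{u^h-v^h}$ together with the oddness $g(-h)=-g(h)$ gives
\[
\frac{w(\lambda^{k+})}{w(\lambda)}
=g(1)\,\frac{\prod_{i\neq k}g(x_k-x_i)}{\prod_{j}g(x_k-y_j)},
\]
with the analogous formula at outer corners. Setting $a_i=t^{x_i}$, $b_j=t^{y_j}$, $t=u/v$, identity $(\star)$ then becomes a rational identity in the $2m+1$ quantities $a_i,b_j$, which follows in one stroke by partial fractions in an \emph{auxiliary} variable: expand
\[
f(s)=\prod_{i=0}^{m}\frac{s+a_i}{s-a_i}\prod_{j=1}^{m}\frac{s-b_j}{s+b_j}
\]
as $1$ plus simple-pole terms and compare the residue expansion with $f(0)=-1$. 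Introducing $s$ with $t$ generic is exactly what makes all poles simple and the residues computable term by term --- the step your one-variable residue plan cannot supply. This is also structurally the same device the paper itself uses to prove Lemma~\ref{th:Hlambda+1} (an auxiliary-variable polynomial shown to vanish at $m+1$ points), so a template for finishing was available inside the paper.
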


Lemma \ref{th:carde} implies
\begin{align*}
 \sum_{\lambda^{+}} \frac{\prod_{\square\in\lambda^{+}}
\rho(h_\square,z)}{H_{\lambda^{+}}} -
\frac{\prod_{\square\in\lambda} \rho(h_\square,z)}{H_\lambda}
= z
\sum_{\lambda^-} \frac{\prod_{\square\in\lambda^-}
\rho(h_\square,z)}{H_{\lambda^-}}.
\end{align*}
Comparing the coefficients of $z^k$, we obtain
\begin{equation}\label{eq:d*}
D\Bigl(\frac{L_k(\lambda)}{H_\lambda}\Bigr)=\frac{| \lambda
|
L_{k-1}(\lambda)}{H_\lambda}-D^-\Bigl(\frac{L_{k-1}(\lambda)}{H_\lambda}\Bigr).
\end{equation}

\begin{lem} \label{th:DrL}
For each partition $\lambda$ and each integer $r\geq 1$ we have
$$
D^r\Bigl(\frac{L_k(\lambda)}{H_\lambda}\Bigr)=
| \lambda | D^{r-1}\Bigl(\frac{L_{k-1}(\lambda)}{H_\lambda}\Bigr)
+(r-1)D^{r-2}\Bigl(\frac{L_{k-1}(\lambda)}{H_\lambda}\Bigr)-
D^-D^{r-1}\Bigl(\frac{L_{k-1}(\lambda)}{H_\lambda}\Bigr).
$$
\end{lem}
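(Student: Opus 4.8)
The plan is to argue by induction on $r$, with the established identity \eqref{eq:d*} serving as the base case $r=1$: there the coefficient $r-1$ of the middle term vanishes and $D^0$ is the identity, so the asserted formula reduces to $D\bigl(L_k(\lambda)/H_\lambda\bigr)=|\lambda|\,L_{k-1}(\lambda)/H_\lambda-D^-\bigl(L_{k-1}(\lambda)/H_\lambda\bigr)$, which is exactly \eqref{eq:d*}.

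For the inductive step, I would assume the formula holds for some $r\ge 1$ and apply the operator $D$ to both sides; the left-hand side then becomes $D^{r+1}\bigl(L_k(\lambda)/H_\lambda\bigr)$. To simplify the right-hand side I would use two auxiliary facts. First, for any function $g$ of partitions the definition of $D$ gives $\sum_{\lambda^+}g(\lambda^+)=g(\lambda)+Dg(\lambda)$, whence a one-line computation (equivalently, the case $k=r=1$ of Lemma \ref{th:Dkbino}) produces the product rule
$$D\bigl(|\lambda|\,g(\lambda)\bigr)=g(\lambda)+(|\lambda|+1)\,Dg(\lambda).$$
Second, Lemma \ref{th:DD-} supplies the commutation relation $DD^-=D^-D+D$, which lets me push the newly applied $D$ through the $D^-$ already present in the last term.

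Abbreviating $\Phi:=L_{k-1}(\lambda)/H_\lambda$, I would apply these facts with $g=D^{r-1}\Phi$. The first term $D\bigl(|\lambda|\,D^{r-1}\Phi\bigr)$ expands to $D^{r-1}\Phi+|\lambda|\,D^{r}\Phi+D^{r}\Phi$; the middle term becomes $(r-1)\,D^{r-1}\Phi$; and the last term $-DD^-D^{r-1}\Phi$ turns into $-D^-D^{r}\Phi-D^{r}\Phi$ after applying Lemma \ref{th:DD-}. The crucial observation is that the two stray terms $+D^{r}\Phi$ (from the product rule) and $-D^{r}\Phi$ (from the commutator) cancel; what remains is $|\lambda|\,D^{r}\Phi+r\,D^{r-1}\Phi-D^-D^{r}\Phi$, which is exactly the claimed identity with $r$ replaced by $r+1$. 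I expect the main obstacle to be the careful bookkeeping of this cancellation: both the product rule and the noncommutativity of $D$ and $D^-$ separately generate an extraneous $D^{r}\Phi$ term, and the induction succeeds precisely because their coefficients are $+1$ and $-1$ and thus annihilate one another.
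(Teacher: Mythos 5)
Your proof is correct and follows essentially the same route as the paper: induction on $r$ with \eqref{eq:d*} as the base case, then applying $D$ in the inductive step and simplifying via the special case $D\bigl(|\lambda|\,g(\lambda)\bigr)=g(\lambda)+(|\lambda|+1)Dg(\lambda)$ of Lemma \ref{th:Dkbino} together with the commutation relation $DD^-=D^-D+D$ of Lemma \ref{th:DD-}. The cancellation of the two stray $D^{r}$-terms that you make explicit is precisely what happens, silently, in the paper's more terse computation.
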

\begin{proof}
The lemma is true when $r=1$ by \eqref{eq:d*}. Assume that it is true for some $r\geq 1$. By Lemmas \ref{th:Dkbino} and \ref{th:DD-}
we have
\begin{align*}
    D^{r+1}\Bigl(\frac{L_{k}(\lambda)}{H_\lambda}\Bigr)&= D\Bigl(| \lambda |
    D^{r-1}\Bigl(\frac{L_{k-1}(\lambda)}{H_\lambda}\Bigr)
+(r-1)D^{r-2}\Bigl(\frac{L_{k-1}(\lambda)}{H_\lambda}\Bigr)\\
&\qquad\qquad
- D^-D^{r-1}\Bigl(\frac{L_{k-1}(\lambda)}{H_\lambda}\Bigr)\Bigr)\\
&= | \lambda |
D^{r}\Bigl(\frac{L_{k-1}(\lambda)}{H_\lambda}\Bigr)
+rD^{r-1}\Bigl(\frac{L_{k-1}(\lambda)}{H_\lambda}\Bigr)-
D^-D^{r}\Bigl(\frac{L_{k-1}(\lambda)}{H_\lambda}\Bigr).\qedhere
\end{align*}
\end{proof}

\begin{proof}[Proof of Theorem \ref{th:DL}]
Identity \eqref{eq:DL+1} is proved by induction on $r$. When $r=0$, we have $D(\frac{L_{0}(\lambda)}{H_\lambda})= D(\frac{1}{H_\lambda})=0$ by Lemma \ref{th:D+}. Assume that \eqref{eq:DL+1} is true for some $r\geq 0$. So that
$$
D^{2r+1}\Bigl(\frac{L_{r}(\lambda)}{H_\lambda}\Bigr)=
D^{2r+2}\Bigl(\frac{L_{r}(\lambda)}{H_\lambda}\Bigr)=0.
$$
\goodbreak
By Lemma \ref{th:DrL} we obtain
\begin{align*}
    \qquad D^{2r+3}\Bigl(\frac{L_{r+1}(\lambda)}{H_\lambda}\Bigr)
    &= | \lambda |
    D^{2r+2}\Bigl(\frac{L_{r}(\lambda)}{H_\lambda}\Bigr)
    +(2r+2)D^{2r+1}\Bigl(\frac{L_{r}(\lambda)}{H_\lambda}\Bigr)\\
    &\qquad \qquad - D^-D^{2r+2}\Bigl(\frac{L_{r}(\lambda)}{H_\lambda}\Bigr) \\
    &=0.
\end{align*}

For \eqref{eq:DL+0} and \eqref{eq:DL-1} we proceed in the same manner. By Lemma \ref{th:DrL}, we have
\begin{align*}
    \qquad D^{2r+2}\Bigl(\frac{L_{r+1}(\lambda)}{H_\lambda}\Bigr)
&= | \lambda |
    D^{2r+1}\Bigl(\frac{L_{r}(\lambda)}{H_\lambda}\Bigr)
    +(2r+1)D^{2r}\Bigl(\frac{L_{r}(\lambda)}{H_\lambda}\Bigr)\\
&\qquad\qquad
-
D^-D^{2r+1}\Bigl(\frac{L_{r}(\lambda)}{H_\lambda}\Bigr)\\
&=
(2r+1)D^{2r}\Bigl(\frac{L_{r}(\lambda)}{H_\lambda}\Bigr)
\\
&= (2r+1) \cdot
\frac{(2r-1)!!}{H_\lambda}\\
&=\frac{(2r+1)!!}{H_\lambda},\\
\noalign{\noindent and}
D^{2r+1}\Bigl(\frac{L_{r+1}(\lambda)}{H_\lambda}\Bigr)
& = | \lambda |
D^{2r}\Bigl(\frac{L_{r}(\lambda)}{H_\lambda}\Bigr)
+2rD^{2r-1}\Bigl(\frac{L_{r}(\lambda)}{H_\lambda}\Bigr)\\
&\qquad\qquad - D^-D^{2r}\Bigl(\frac{L_{r}(\lambda)}{H_\lambda}\Bigr)\\
&= | \lambda | \frac{(2r-1)!!}{H_\lambda} +(2r-1)!!\frac{2r|\lambda|}{H_\lambda}-
D^-\left(\frac{(2r-1)!!}{H_\lambda}\right)\\
&=
(2r+1)!!\frac{|\lambda|}{H_\lambda}.
\end{align*}
The case $r=1$ is guaranteed by Lemma \ref{th:DrL}.
\end{proof}

By Theorems \ref{th:DL} and \ref{th:main2} we obtain the following result.

\begin{thm} \label{th:polynomial*}
Let $\mu$ be a given partition and $r$ a nonnegative integer. Then
$$
\sum_{|\lambda/\mu|=n}f_{\lambda/\mu}
\frac{L_{r}(\lambda)}{H_\lambda}
=\sum_{0\leq k\leq 2r}\binom{n}{k} D^k\Bigl(\frac{L_{r}(\mu)}{H(\mu)}\Bigr)
$$
is a polynomial of $n$ with degree at most $2r$. In particular, let
$\mu=\emptyset$, we have
$$\sum_{|\lambda|=n}f_{\lambda}\frac{L_{r}(\lambda)}{H_\lambda}=
\sum_{0\leq k\leq 2r}d_{k}\binom{n}{k}$$
where
$d_k= D^k\bigl(\frac{L_{r}(\lambda)}{H_\lambda}\bigr)\big|_{\lambda=\emptyset}.$
\end{thm}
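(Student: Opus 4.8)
The plan is to apply Theorem~\ref{th:main2} to the single function $g(\lambda):=L_r(\lambda)/H_\lambda$ and then use the vanishing statement~\eqref{eq:DL+1} of Theorem~\ref{th:DL} to truncate the resulting finite sum. Everything needed is already in place, so the argument is short.

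First I would specialize identity~\eqref{eq:main2} to $g(\lambda)=L_r(\lambda)/H_\lambda$, which gives immediately
$$
\sum_{|\lambda/\mu|=n}f_{\lambda/\mu}\frac{L_r(\lambda)}{H_\lambda}
=\sum_{k=0}^n\binom{n}{k}D^k\Bigl(\frac{L_r(\mu)}{H_\mu}\Bigr).
$$
The only real point to check is that the terms with large $k$ drop out. By~\eqref{eq:DL+1} the function $D^{2r+1}\bigl(L_r(\lambda)/H_\lambda\bigr)$ is identically zero on partitions. Since $D$ is linear and $D^k=D^{k-2r-1}\circ D^{2r+1}$ for every $k\geq 2r+1$, applying $D$ repeatedly to the zero function yields $D^k\bigl(L_r(\lambda)/H_\lambda\bigr)=0$ for all $k\geq 2r+1$. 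Hence each summand with $k>2r$ vanishes; and because $\binom{n}{k}=0$ whenever $k>n$, the upper limit may be raised harmlessly from $n$ to $2r$. This produces the claimed identity
$$
\sum_{|\lambda/\mu|=n}f_{\lambda/\mu}\frac{L_r(\lambda)}{H_\lambda}
=\sum_{0\leq k\leq 2r}\binom{n}{k}D^k\Bigl(\frac{L_r(\mu)}{H_\mu}\Bigr).
$$

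For the polynomiality and the degree bound, I would observe that each coefficient $D^k\bigl(L_r(\mu)/H_\mu\bigr)$ is a constant, depending only on the fixed data $\mu$ and $r$, while $\binom{n}{k}$ is a polynomial in $n$ of degree exactly $k$. As $k$ runs over $0,1,\dots,2r$, the right-hand side is therefore a polynomial in $n$ of degree at most $2r$. The particular case then follows by putting $\mu=\emptyset$: using $H_\emptyset=1$ and $f_{\lambda/\emptyset}=f_\lambda$, and writing $d_k:=D^k\bigl(L_r(\lambda)/H_\lambda\bigr)\big|_{\lambda=\emptyset}$, the general identity collapses to the stated one.

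The main obstacle is essentially conceptual rather than computational: the result is a direct corollary, and the single place demanding attention is the passage from the one vanishing relation $D^{2r+1}\bigl(L_r(\lambda)/H_\lambda\bigr)=0$ to the vanishing of \emph{all} higher differences $D^k\bigl(L_r(\lambda)/H_\lambda\bigr)$ with $k\geq 2r+1$, which is exactly what lets the apparently $n$-dependent sum collapse to a fixed polynomial. No delicate estimate or new identity is required beyond Theorems~\ref{th:main2} and~\ref{th:DL}.
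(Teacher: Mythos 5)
Your proposal is correct and follows exactly the paper's route: the paper derives Theorem~\ref{th:polynomial*} by applying \eqref{eq:main2} of Theorem~\ref{th:main2} to $g(\lambda)=L_r(\lambda)/H_\lambda$ and truncating via the vanishing \eqref{eq:DL+1} of Theorem~\ref{th:DL}, which is precisely your argument. Your explicit remarks --- that \eqref{eq:DL+1} holds for \emph{all} partitions so all higher differences $D^k$ with $k\geq 2r+1$ vanish, and that $\binom{n}{k}=0$ for $k>n$ lets the summation limit be replaced by $2r$ --- correctly fill in the details the paper leaves implicit.
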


\begin{proof}[Proof of Theorem \ref{th:skewmarked}]
Let $r=1$ in Theorem \ref{th:polynomial*}. Then we obtain
\begin{align*}
\sum_{|\lambda/\mu|=n}f_{\lambda/\mu}
\frac{L_{1}(\lambda)}{H_\lambda}
&=
\frac{L_{1}(\mu)}{H(\mu)}
+
n D\Bigl(\frac{L_{1}(\mu)}{H(\mu)}\Bigr)
+
\binom{n}{2} D^2\Bigl(\frac{L_{1}(\mu)}{H(\mu)}\Bigr)\\
&=
\frac{L_{1}(\mu)}{H(\mu)}
+
n \frac{|\mu|}{H(\mu)}
+
\binom{n}{2} \frac{1}{H(\mu)},
\end{align*}
and
$$
\sum_{|\lambda/\mu|=n}f_{\lambda/\mu}
H_\mu \frac{L_{1}(\lambda) - L_1(\mu)}{H_\lambda}
=
n {|\mu|}
+
\binom{n}{2}
$$
by \eqref{eq:skewhook}. This is equivalent to \eqref{eq:skewmarked}.
\end{proof}

\section{A family of $D$-polynomials $q_\nu(\lambda)$} \label{sec:Dpoly}  
In this section, we study the properties of a family of functions $q_\nu(\lambda)$ needed in the proof of our main Theorems \ref{th:main1} and \ref{th:skewstanley}. The main result in this section is Theorem \ref{th:dqnu}.

For a partition $\lambda$, the \emph{outer corners} (see \cite{bandlow}) are the boxes which can be removed to get a new partition $\lambda^-$. Let $m=m(\la)$ be the number of outer corners of $\la$ and $(\alpha_1,\beta_1),\ldots,(\alpha_{m},\beta_{m})$ be the coordinates of outer corners such that $\alpha_1>\alpha_2>\cdots >\alpha_m$. Let $y_j=y_j(\la):=\beta_j-\alpha_j$ be the contents of outer corners for $1\leq j \leq m.$ We set $\alpha_{m+1}=\beta_0=0$ and call $(\alpha_1,\beta_0),(\alpha_2,\beta_1),\ldots,(\alpha_{m+1},\beta_{m})$ the \emph{inner corners} of $\lambda$. Let $x_i=x_i(\la):=\beta_i-\alpha_{i+1}$ be the contents of inner corners for $0\leq i \leq m$ (see Figure \ref{fig:2}). 
It is easy to verify that $x_i$ and $y_j$ satisfy the following relation:
\begin{equation}
x_0<y_1<x_1<y_2<x_2<\cdots <y_m <x_m.
\end{equation}
According to Olshanski~\cite{ols3} we define
\begin{equation}\label{def:qk}
q_k(\lambda):=\sum_{0\leq i\leq m}{x_i}^{k}-\sum_{1\leq j\leq m}{y_j}^{k}
\end{equation}
for each $k\geq 0$. The first three values of $\{q_k(\lambda)\}_{k\geq 0}$ can be evaluated explicitly. For each partition $\lambda$ we have
\begin{equation}\label{eq:q012}
q_0({\lambda})=1,\quad
q_1({\lambda})=0 \text{\quad and \quad}
q_2({\lambda})=2\,|\lambda|.
\end{equation}

\begin{figure}
\centering
\begin{center}
\includegraphics[]{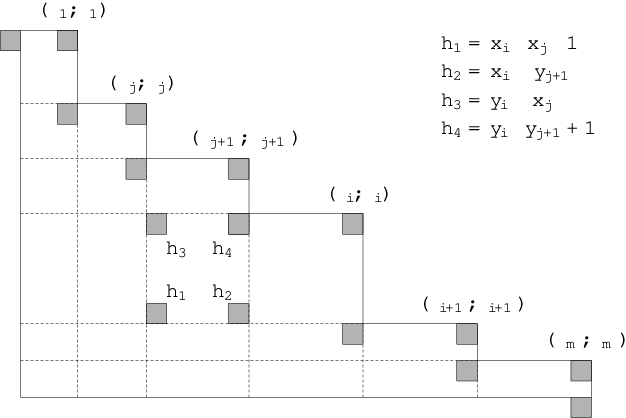}
\end{center}
\caption{A partition and its corners. The outer corners are labelled with $(\alpha_i, \beta_i)$ ($i=1,2,\ldots, m$). The inner corners are indicated by the dot symbol ``$\cdot$''.} \label{fig:2}
\end{figure}

Let us prove \eqref{eq:q012}. First we have $q_0({\lambda})=(m+1)-m=1.$ By definition of $x_i$ and $y_j$, we obtain
$$
\sum_{0\leq i\leq m}x_i =\sum_{1\leq j\leq m}y_j =\sum_{1\leq i\leq
m}\beta_{i}-\sum_{1\leq j\leq m}\alpha_{j}.
$$
Thus
$$
q_1({\lambda})=\sum_{0\leq i\leq m}x_i-\sum_{1\leq j\leq
m}y_j=0.
$$
We also have
\begin{align*}
q_2({\lambda})&=\sum_{0\leq i\leq
m}x_i^2-\sum_{1\leq j\leq m}y_j^2\\
&=\sum_{0\leq i\leq
m}(\beta_i-\alpha_{i+1})^2-\sum_{1\leq j\leq
m}(\beta_j-\alpha_{j})^2\\
&=\sum_{1\leq i\leq
m}2\beta_i(\alpha_i-\alpha_{i+1}).
\end{align*}
By Figure $2$ it is easy to see that $\sum_{1\leq i\leq m}\beta_i(\alpha_i-\alpha_{i+1})$ is equal to the number of boxes in $\lambda$, which is $|\lambda|$. Hence $q_2({\lambda})=2\,|\lambda|$.

\medskip

For each partition $\nu=(\nu_1, \nu_2, \ldots, \nu_\ell)$, the function  $q_\nu(\lambda)$ is defined by
\begin{equation}
q_\nu(\lambda):=q_{\nu_1}(\lambda)q_{\nu_2}(\lambda)\cdots q_{\nu_\ell}(\lambda).
\end{equation}

\begin{thm} \label{th:dqnu}
Let $\nu$ be a partition. Then $q_\nu(\lambda)$ is a $D$-polynomial with degree at most $|\nu|/2$. Furthermore, there exist some $b_{\delta}\in \mathbb{Q}$ such that
\begin{equation}\label{eq:Dnu}
D(\frac{q_{\nu}(\lambda)}{H_\lambda})=
\sum_{|\delta|\leq
|\nu|-2}b_{\delta}\frac{q_\delta(\lambda)}{H_\lambda}
\end{equation}
for every partition $\lambda$.
\end{thm}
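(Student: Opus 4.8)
The plan is to apply Leibniz's rule to collapse everything onto a single box addition, and then to recognise the resulting sum over the corners of $\lambda$ as a linear functional whose moments are governed by the $q_k$ themselves. Write $n=|\lambda|$, and for $0\le i\le m$ let $\lambda^{(i)}$ be the partition obtained by adding a box at the inner corner of content $x_i$. By Lemma~\ref{th:leibniz1},
$$
D\Bigl(\frac{q_\nu(\lambda)}{H_\lambda}\Bigr)=\sum_{i=0}^m\frac{q_\nu(\lambda^{(i)})-q_\nu(\lambda)}{H_{\lambda^{(i)}}}.
$$
So the two ingredients I must control are the single-box increment $q_\nu(\lambda^{(i)})-q_\nu(\lambda)$ and the hook-length ratio $H_\lambda/H_{\lambda^{(i)}}$.

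First I would establish the key local identity: adding a box at the inner corner of content $c=x_i$ changes each generator by
$$
q_k(\lambda^{(i)})-q_k(\lambda)=(c+1)^k+(c-1)^k-2c^k=:\Delta_k(c).
$$
The cleanest route is through the rational function $P_\lambda(z)=\prod_{i=0}^m(z-x_i)\big/\prod_{j=1}^m(z-y_j)$, for which \eqref{def:qk} gives $\sum_{k\ge0}q_k(\lambda)z^{-k-1}=P_\lambda'(z)/P_\lambda(z)$. A direct inspection of how the inner and outer corner contents move (content $c$ leaves the addable set and enters the removable set, while the neighbouring contents $c\pm1$ adjust, all boundary cases producing the same cancellation) yields the uniform transformation $P_{\lambda^{(i)}}(z)=P_\lambda(z)\,(z-c+1)(z-c-1)/(z-c)^2$; taking the logarithmic derivative and reading off coefficients gives $\Delta_k(c)$. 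The decisive point is that the $c^k$ and $c^{k-1}$ terms cancel, so $\Delta_k$ is a polynomial in $c$ of degree $k-2$ (and is $\equiv0$ for $k\le1$). This drop of the degree by $2$ is the engine of the theorem.

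Next I would convert the corner sum into the required shape. The hook-length ratio equals the transition weight, $H_\lambda/H_{\lambda^{(i)}}=r_i:=\prod_{j=1}^m(x_i-y_j)\big/\prod_{i'\ne i}(x_i-x_{i'})$, which I would derive from the removal formula of Lemma~\ref{th:shifted3} applied to $\lambda^{(i)}$. Since the $r_i$ are exactly the partial-fraction residues of $1/P_\lambda$, one has $\sum_i r_i/(z-x_i)=1/P_\lambda(z)$, whence the moments $M_d:=\sum_i r_i x_i^{\,d}$ obey $\sum_d M_dz^{-d-1}=1/P_\lambda(z)$; combining this with $P_\lambda'/P_\lambda=\sum_k q_kz^{-k-1}$ gives the recursion $d\,M_d=\sum_{a=1}^d q_a(\lambda)M_{d-a}$, so every $M_d$, and hence $\sum_i r_i f(x_i)$ for any polynomial $f$ of degree $d$, is a $\mathbb{Q}$-linear combination of the functions $q_\gamma(\lambda)$ with $|\gamma|\le d$. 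Expanding the product by Leibniz, $q_\nu(\lambda^{(i)})-q_\nu(\lambda)=\sum_{\emptyset\ne A\subseteq[\ell]}\bigl(\prod_{k\in A}\Delta_{\nu_k}(x_i)\bigr)\prod_{l\notin A}q_{\nu_l}(\lambda)$, the polynomial $\prod_{k\in A}\Delta_{\nu_k}$ has degree $\sum_{k\in A}\nu_k-2|A|\le\sum_{k\in A}\nu_k-2$ in the variable $x_i$; applying the moment computation to it and multiplying by the remaining factor $\prod_{l\notin A}q_{\nu_l}$ (of weight $|\nu|-\sum_{k\in A}\nu_k$) produces only terms $q_\delta(\lambda)/H_\lambda$ with $|\delta|\le|\nu|-2$. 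Summing over $A$ yields \eqref{eq:Dnu} with explicit $b_\delta\in\mathbb{Q}$.

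Finally, \eqref{eq:Dnu} gives the degree bound by induction on $|\nu|$. The cases $|\nu|\le1$ are immediate, since $q_\emptyset=1$ and $q_{(1)}=q_1=0$, so $D(q_\nu/H_\lambda)=0$ by Lemma~\ref{th:D+}. For the inductive step, \eqref{eq:Dnu} expresses $D(q_\nu/H_\lambda)$ as a combination of $q_\delta/H_\lambda$ with $|\delta|\le|\nu|-2$, each of which has degree at most $\lfloor(|\nu|-2)/2\rfloor=\lfloor|\nu|/2\rfloor-1$ by the induction hypothesis; applying $D^{\lfloor|\nu|/2\rfloor}$ annihilates every such term, whence $D^{\lfloor|\nu|/2\rfloor+1}(q_\nu/H_\lambda)=0$, i.e.\ the degree is at most $|\nu|/2$. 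The main obstacle is the local identity of the second paragraph: verifying the uniform $P$-transformation (equivalently, the corner-content bookkeeping across all boundary cases) together with the hook-ratio/transition-weight identity is the real technical work, whereas the passage from $\Delta_k$ to \eqref{eq:Dnu} via the moment recursion and the concluding induction are routine.
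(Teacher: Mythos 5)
Your proposal is correct and follows essentially the same route as the paper: your uniform corner transformation $P_{\lambda^{(i)}}(z)=P_\lambda(z)(z-c+1)(z-c-1)/(z-c)^2$ is exactly the four-case analysis behind Lemma \ref{th:qk} (equation \eqref{eq:g-g}), your residue identity $\sum_i r_i/(z-x_i)=1/P_\lambda(z)$ and the resulting expression of the moments $M_d$ in the $q_\gamma$ with $|\gamma|\le d$ is the paper's Lemma \ref{th:Hlambda+1} (there concluded via $\exp\bigl(\sum_{k\ge1}q_k(\lambda)z^k/k\bigr)$ rather than your recursion $d\,M_d=\sum_a q_a M_{d-a}$, an equivalent manipulation), and your Leibniz expansion with the degree count $|\delta|\le|\nu|-2$ matches the paper's proof of Theorem \ref{th:dqnu} essentially verbatim. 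The only differences are cosmetic: you source the hook ratio from Lemma \ref{th:shifted3} where the paper computes it directly in Lemma \ref{th:Hlambda+}, and you spell out the concluding induction for the degree bound, which the paper leaves implicit.
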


Notice that \eqref{eq:Dnu} could also be obtained by carefully reading \cite{ols1} or \cite{ols3}. But for completeness and since it is not explicitly given in \cite{ols1} or \cite{ols3}, we will include a proof later.

First we prove some useful lemmas related to hook lengths and $q_{\nu}(\lambda)$. For each $k=0,1, \ldots, m$, denote by $\square_k=(\alpha_{k+1}+1,\beta_k+1)$ and $\lambda^{k+}=\lambda\cup\{\square_k\}$.
\begin{lem} \label{th:Hlambda+}
Let $g$ be a function defined on integers. Then we have
\begin{align*}
\sum_{\square\in\lambda^{k+}}
g(h_{\square})-\sum_{\square\in\lambda}g(h_{\square})
&= g(1)+\sum_{0\leq i \leq
k-1}\bigl(g(x_k-x_i)-g(x_k-y_{i+1})\bigr)\\
&\qquad \qquad +\sum_{k+1\leq i \leq m}\bigl(g(x_i-x_k)-g(y_i-x_k)\bigr)\\
\noalign{\noindent and}
\frac{\prod_{\square\in\lambda^{k+}}g(h_{\square})}{\prod_{\square\in\lambda}g(h_{\square})}
    &= g(1)\prod_{0\leq i\leq
k-1}\frac{g(x_k-x_i)}{g(x_k-y_{i+1})}\prod_{k+1\leq i\leq
m}\frac{g(x_i-x_k)}{g(y_i-x_k)}.\\
\noalign{\noindent In particular, we have}
\frac{H_{\lambda^{k+}}}{H_{{\lambda}}}&=\frac{\prod\limits_{\substack{0\leq i\leq
m\\ i\neq k}}(x_k-x_i)}{\prod\limits_{\substack{1\leq j\leq
m}}(x_k-y_j)}.
\end{align*}
\end{lem}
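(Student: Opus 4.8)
The plan is to prove a single combinatorial statement that implies all three displayed identities at once: I would identify, as multisets of integers, the hook lengths of the boxes whose hook changes when $\square_k$ is adjoined, both in $\lambda$ and in $\lambda^{k+}$. Once these two multisets are determined, the first (additive) identity follows by applying $\sum g(\cdot)$, the second (multiplicative) identity by applying $\prod g(\cdot)$, and the last displayed formula is the special case $g(h)=h$ of the second, after a sign bookkeeping that pairs $\prod_{i>k}(x_i-x_k)$ with $\prod_{j>k}(y_j-x_k)$.

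First I would encode $\lambda$ by the $0/1$-profile of its boundary: reading contents from $-\infty$ to $+\infty$, record a $1$ (``bead'') on each unit step of slope $+1$ and a $0$ (``hole'') on each step of slope $-1$, the steps sitting at half-integer contents. In this encoding the addable corners (valleys) occur at the contents $x_0<\dots<x_m$ and correspond to a local hole-then-bead pattern, while the removable corners (peaks) occur at $y_1<\dots<y_m$ and correspond to bead-then-hole; the maximal runs of consecutive beads occupy the content intervals $(x_i,y_{i+1})$ and the runs of consecutive holes the intervals $(y_j,x_j)$. I would then recall the standard fact that the boxes of $\lambda$ are in bijection with the \emph{inversions} of this profile (a bead at content $p$ together with a hole at content $q>p$), the hook length of the corresponding box being exactly $q-p$.

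Adjoining $\square_k$ amounts to the local swap of the hole--bead pair at contents $x_k-\tfrac12,\,x_k+\tfrac12$ into a bead--hole pair, the valley $x_k$ becoming a peak. The inversions that are created or destroyed are precisely those involving one of these two positions, so the changed boxes are: the new box $\square_k$ itself (hook length $1$); for every bead at $p<x_k-\tfrac12$ a hook that grows from $x_k-\tfrac12-p$ to $x_k+\tfrac12-p$; and for every hole at $q>x_k+\tfrac12$ a hook that grows from $q-x_k-\tfrac12$ to $q-x_k+\tfrac12$. Running $p$ through a bead-run $(x_i,y_{i+1})$ and $q$ through a hole-run $(y_j,x_j)$, the old hooks fill the consecutive blocks $[x_k-y_{i+1},\,x_k-x_i-1]$ and $[y_j-x_k,\,x_j-x_k-1]$, while the new hooks fill the same blocks shifted up by one, namely $[x_k-y_{i+1}+1,\,x_k-x_i]$ and $[y_j-x_k+1,\,x_j-x_k]$. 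This is the multiset identification sought.

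With the two multisets in hand the identities are immediate: applying $\sum g$, each shifted block telescopes to $g(x_k-x_i)-g(x_k-y_{i+1})$ (resp.\ $g(x_j-x_k)-g(y_j-x_k)$) and the new box contributes $g(1)$, which is the first identity; applying $\prod g$, the same blocks telescope to the ratios $g(x_k-x_i)/g(x_k-y_{i+1})$ (resp.\ $g(x_j-x_k)/g(y_j-x_k)$), which is the second. I expect the only delicate points to be the correct placement of the half-integer offsets in the bead/hole encoding, and the verification that the bead-runs to the left of $x_k$ are indexed by $0\le i\le k-1$ and the hole-runs to the right by $k+1\le i\le m$ --- both of which are forced by the interlacing $x_0<y_1<x_1<\dots<y_m<x_m$. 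A direct alternative, avoiding the profile, is to compute the hooks of the boxes in the row and column of $\square_k$ directly from the row lengths and column heights of $\lambda$ and to recognize the same consecutive blocks; this is more calculational but uses nothing beyond the definition of hook length.
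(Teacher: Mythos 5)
Your argument is correct, and it takes a genuinely different route from the paper's. The paper works directly on the Young diagram: it observes that adjoining $\square_k$ increases by exactly one the hooks of the boxes in the column $\beta_k+1$ and the row $\alpha_{k+1}+1$ and leaves all other hooks unchanged, then lists those hook lengths explicitly --- in $\lambda$ they form the consecutive blocks $x_k-y_{i+1},\dots,x_k-x_i-1$ for $0\le i\le k-1$ (and symmetrically for the row), in $\lambda^{k+}$ the same blocks shifted by one --- and telescopes; the multiplicative identity is then obtained by substituting $\ln g(h)$ for $g(h)$, and the last formula by taking $g(h)=h$. You reach the identical block-and-shift multiset identification through the bead/hole profile of the boundary and the standard bijection between boxes of $\lambda$ and inversions of the profile, with hook length equal to the difference of the two half-integer contents; adjoining $\square_k$ is the local hole--bead swap at the valley $x_k$, and the affected inversions are read off at once. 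What your route buys: the facts that only the hooks meeting the row or column of $\square_k$ change, that each grows by exactly one, and that the blocks are indexed by $0\le i\le k-1$ and $k+1\le i\le m$ all fall out automatically from the interlacing $x_0<y_1<x_1<\cdots<y_m<x_m$, with no inspection of how the arm and leg of $\square_k$ cross the boundary; moreover, establishing the multiset identity once yields the additive and multiplicative statements simultaneously, bypassing the paper's $\ln g$ substitution (which, read literally, is delicate when $g$ takes nonpositive values, though of course the underlying multiset identity is what both proofs really establish). What the paper's route buys: it is entirely self-contained, using nothing beyond the definition of hook length, whereas for full rigor you should cite or prove the boxes--inversions correspondence (it follows, e.g., from the first-column hook lengths/beta-set description). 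Your sign bookkeeping in the case $g(h)=h$, pairing $\prod_{i>k}(x_i-x_k)$ with $\prod_{j>k}(y_j-x_k)$ so that the two factors $(-1)^{m-k}$ cancel, is exactly the step the paper leaves implicit; and the ``direct alternative'' you sketch in your closing sentence is precisely the proof the paper gives.
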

\begin{proof}
When adding the box $\square_k$ to $\lambda,$ it is easy to see that the hook lengths of boxes which are in the same row or the same column with $\square_k$ increase by one. The hook lengths of other boxes don't change. Thus we have
\begin{align*}
    &\sum_{\square\in\lambda^{k+}}g(h_{\square})-\sum_{\square\in\lambda}g(h_{\square})
    =
    \sum_{1\leq i\leq \alpha_{k+1}}
    \Bigl(g\bigl(h_{(i,\beta_{k}+1)}(\lambda^{k+})\bigr)-g\bigl(h_{(i,\beta_{k}+1)}(\lambda)\bigr)\Bigr)\\
&\qquad \qquad +\sum_{1\leq j\leq \beta_k}
    \Bigl(g\bigl(h_{(\alpha_{k+1}+1,j)}(\lambda^{k+})\bigr)-g\bigl(h_{(\alpha_{k+1}+1,j)}(\lambda)\bigr)\Bigr)
    +
g\bigl(h_{\square_k}(\lambda^{k+})\bigr)
,
\end{align*}
where $h_\square(\lambda)$ (resp. $h_\square(\lambda^{k+})$) denotes the hook length of the box $\square$ in $\lambda$ (resp. $\lambda^{k+}$). On the other hand, the hook lengths of
$$
(\alpha_{k+1}+1,1),(\alpha_{k+1}+1,2),\cdots,(\alpha_{k+1}+1,\beta_{k})
$$
in $\lambda$ and $\lambda^{k+}$ are
\begin{align*}
    x_k-x_i-1,x_k-x_i-2,\cdots,x_k-y_{i+1}+1,x_k-y_{i+1} \quad (0\leq i \leq k-1)
\end{align*}
and
\begin{align*}
    x_k-x_i,x_k-x_i-1,\cdots,x_k-y_{i+1}+2,x_k-y_{i+1}+1 \quad (0\leq i \leq k-1)
\end{align*}
respectively. Hence we obtain
$$\sum_{1\leq
j\leq \beta_k}
\bigl(g(h_{(\alpha_{k+1}+1,j)}(\lambda^{k+}))-g(h_{(\alpha_{k+1}+1,j)}(\lambda))\bigr)=\sum_{0\leq
i \leq k-1}\bigl(g(x_k-x_i)-g(x_k-y_{i+1})\bigr).$$

Similarly,
$$\sum_{1\leq j\leq \alpha_{k+1}}
\bigl(g(h_{(j,\beta_{k}+1)}(\lambda^{k+}))-g(h_{(j,\beta_{k}+1)}(\lambda))\bigr)=\sum_{k+1\leq
i \leq m}\bigl(g(x_i-x_k)-g(y_i-x_k)\bigr).$$

Thus we obtain the first identity in the lemma. The second follows from replacing $g(h)$ by $\ln(g(h))$. In particular, $g(h)=h$ implies the third identity.
\end{proof}

\begin{lem} \label{th:qk}
Let $g$ be a function defined on integers. Define
$$
g_1(\lambda):=\sum_{0\leq i\leq m}g(x_i)-\sum_{1\leq j\leq
m}g(y_j)
$$ 
which is a function of partitions. Then
$$
D\Bigl(\frac{g_1(\lambda)}{H_\lambda}\Bigr)=
\sum_{0\leq i\leq
m}\frac{g(x_i+1)+g(x_i-1)-2g(x_i)}{H_{\lambda^{i+}}}.
$$ 
In particular, let $g(z)=z^k$ so that $g_1(\lambda)=q_k(\lambda)$. Then we obtain
$$
D\Bigl(\frac{q_k(\lambda)}{H_\lambda}\Bigr)=
\sum_{0\leq i\leq m}
\frac 2{H_{\lambda^{i+}}}
 {\sum_{1\leq j \leq {k/2}}\binom{k}{2j}{x_{i}}^{k-2j}}.
$$
\end{lem}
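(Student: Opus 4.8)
The plan is to reduce the identity to a single-box computation by invoking Lemma~\ref{th:leibniz1}, and then to determine exactly how the alternating corner sum $g_1$ changes when one box is added. Since the partitions $\lambda^{+}$ obtained by adding a box are precisely $\lambda^{0+},\lambda^{1+},\ldots,\lambda^{m+}$, Lemma~\ref{th:leibniz1} gives
$$
D\Bigl(\frac{g_1(\lambda)}{H_{\lambda}}\Bigr)
=\sum_{\lambda^{+}}\frac{g_1(\lambda^{+})-g_1(\lambda)}{H_{\lambda^{+}}}
=\sum_{0\leq i\leq m}\frac{g_1(\lambda^{i+})-g_1(\lambda)}{H_{\lambda^{i+}}}.
$$
Everything therefore reduces to the key increment identity
$$
g_1(\lambda^{i+})-g_1(\lambda)=g(x_i+1)+g(x_i-1)-2g(x_i),
$$
valid for each inner corner $x_i$.

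To prove this identity I would encode the boundary of $\lambda$ by its sequence of unit steps indexed by half-integer contents, each step being an \emph{up} step or a \emph{down} step (reading in the direction of increasing content). In this encoding an outer corner of content $d$ corresponds to the local pattern (up, down) at the positions $d-\tfrac12,d+\tfrac12$, an inner corner to the pattern (down, up), and a smooth point to (up, up) or (down, down). Inserting the box $\square_i$ of content $x_i$ simply turns the valley at $x_i$ into a peak: the step at $x_i-\tfrac12$ flips from down to up and the step at $x_i+\tfrac12$ flips from up to down. Because only these two steps change, the corner structure can be altered only at the contents $x_i-1$, $x_i$, and $x_i+1$.

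The contributions of these three contents to $g_1=\sum_{\mathrm{inner}}g-\sum_{\mathrm{outer}}g$ are then read off directly. At content $x_i$ the inner corner is replaced by an outer corner, contributing $-g(x_i)-g(x_i)=-2g(x_i)$. At content $x_i-1$, the unchanged step at $x_i-\tfrac32$ decides the case: either an outer corner already present there is destroyed, or a new inner corner is created; in both cases the net contribution to $g_1$ equals $+g(x_i-1)$. The situation at content $x_i+1$ is symmetric and contributes $+g(x_i+1)$. Adding the three contributions gives the key identity. I expect the one point that must be checked with care to be exactly this uniform cancellation at $x_i\pm1$: a corner being destroyed and a corner being created, though geometrically different, yield the same increment, so that the final answer does not depend on the sizes of the gaps between consecutive corners. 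The extreme corners $x_0$ and $x_m$ need no separate argument, since the steps are eventually constant (down on the far left, up on the far right).

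Finally, for the particular case $g(z)=z^k$, for which $g_1=q_k$, I would substitute into the key identity and expand by the binomial theorem. The odd-degree terms of $(x_i+1)^k$ and $(x_i-1)^k$ cancel while the even-degree terms double, so
$$
(x_i+1)^k+(x_i-1)^k-2x_i^{k}=2\sum_{1\leq j\leq k/2}\binom{k}{2j}x_i^{\,k-2j},
$$
and substituting this into the displayed formula for $D\bigl(g_1(\lambda)/H_\lambda\bigr)$ yields the stated expression for $D\bigl(q_k(\lambda)/H_\lambda\bigr)$.
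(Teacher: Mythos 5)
Your proposal is correct and follows essentially the same route as the paper: reduce via Lemma~\ref{th:leibniz1} to the increment identity $g_1(\lambda^{i+})-g_1(\lambda)=g(x_i+1)+g(x_i-1)-2g(x_i)$, verify it by local analysis of how corner status can change only at contents $x_i-1,\,x_i,\,x_i+1$, and finish with the binomial expansion. Your step-sequence bookkeeping is merely a repackaging of the paper's explicit four-case analysis --- your two binary choices at $x_i-1$ and $x_i+1$ are exactly the paper's cases (i)--(iv) distinguishing whether $\beta_i+1=\beta_{i+1}$ and whether $\alpha_{i+1}+1=\alpha_i$ --- with the same outcome in each case.
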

\begin{proof}
Let $X=\{x_0,x_1,\ldots, x_m\}$ and $Y=\{y_1, y_2, \ldots, y_m\}$ be the sets of contents of inner corners and outer corners of $\lambda$ respectively. Four cases are to be considered. (i) If $\beta_{i}+1<\beta_{i+1}$ and $\alpha_{i+1}+1<\alpha_{i}$. Then it is easy to see that the contents of inner corners and outer corners of $\lambda^{i+}$ are $X\cup \{x_i-1, x_i+1\} \setminus \{x_i\}$ and $Y\cup \{x_i\}$ respectively. (ii) If $\beta_{i}+1=\beta_{i+1}$ and $\alpha_{i+1}+1<\alpha_{i}$, so that $y_{i+1}=x_i+1$. Hence the contents of inner corners and outer corners of $\lambda^{i+}$ are $X\cup \{x_i-1\} \setminus \{x_i\}$ and $Y\cup \{x_i\}\setminus \{x_i+1\}$ respectively. (iii) If $\beta_{i}+1<\beta_{i+1}$ and $\alpha_{i+1}+1=\alpha_{i}$, so that $y_{i}=x_i-1$. Then the contents of inner corners and outer corners of $\lambda^{i+}$ are $X\cup \{x_i+1\} \setminus \{x_i\}$ and $Y\cup \{x_i\}\setminus \{x_i-1\}$ respectively. (iv) If $\beta_{i}+1=\beta_{i+1}$ and $\alpha_{i+1}+1=\alpha_{i}$. Then $y_{i}+1=x_i=y_{i+1}-1$. The contents of inner corners and outer corners of $\lambda^{i+}$ are $X \setminus \{x_i\}$ and $Y\cup \{x_i\}\setminus \{x_i-1, x_i+1\}$ respectively. Thus we always have
\begin{equation}\label{eq:g-g}
    g_1(\lambda^{i+})-g_1(\lambda)=g(x_i+1)+g(x_i-1)-2g(x_i).
\end{equation}
Therefore
\begin{align*}
    D\Bigl(\frac{g_1(\lambda)}{H_\lambda}\Bigr)
    &=\sum_{0\leq
i\leq m}
\frac{g_1(\lambda^{i+})-g_1(\lambda)}{H_{\lambda^{i+}}}
= \sum_{0\leq i\leq m}
\frac{g(x_i+1)+g(x_i-1)-2g(x_i)}{H_{\lambda^{i+}}}
\end{align*}
by Lemma \ref{th:leibniz1}.
\end{proof}

\begin{lem}\label{th:Hlambda+1}
Let $k$ be a nonnegative integer. Then there exist some $b_{\nu}\in \mathbb{Q}$ such that
$$
\sum_{0\leq i\leq m}\frac{H_\lambda}{H_{\lambda^{i+}}}{x_i}^k=
\sum_{|\nu|\leq k}b_{\nu}q_\nu(\lambda)
$$  
for every partition $\lambda.$
\end{lem}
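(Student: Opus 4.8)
The plan is to interpret the sum as a single coefficient in the Laurent expansion at infinity of one rational function, and then to read off that coefficient by means of the classical exponential relation between power sums and complete homogeneous symmetric functions. Taking reciprocals in the third identity of Lemma~\ref{th:Hlambda+}, we have
$$\frac{H_\lambda}{H_{\lambda^{i+}}}=\frac{\prod_{1\leq j\leq m}(x_i-y_j)}{\prod_{0\leq l\leq m,\ l\neq i}(x_i-x_l)}.$$
I would set $P(t)=\prod_{0\leq i\leq m}(t-x_i)$ and $Q(t)=\prod_{1\leq j\leq m}(t-y_j)$, and put $F(t)=Q(t)/P(t)$. Since the $x_i$ are pairwise distinct, $P'(x_i)=\prod_{l\neq i}(x_i-x_l)$, so the displayed ratio is exactly $Q(x_i)/P'(x_i)$; hence the partial fraction decomposition reads $F(t)=\sum_{i}\bigl(H_\lambda/H_{\lambda^{i+}}\bigr)(t-x_i)^{-1}$. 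Expanding each term as $1/(t-x_i)=\sum_{k\geq0}x_i^k\,t^{-k-1}$ shows that the quantity to be computed is precisely the coefficient of $t^{-k-1}$ in $F(t)$.

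Next I would compute $F(t)$ a second way, via its logarithm. Using $\log(t-a)=\log t-\sum_{k\geq1}a^k t^{-k}/k$ together with the definition~\eqref{def:qk} of $q_k(\lambda)$, the contributions of the $y_j$ and $x_i$ telescope into
$$\log F(t)=-\log t+\sum_{k\geq1}\frac{q_k(\lambda)}{k}\,t^{-k},$$
the coefficient of $\log t$ being $m-(m+1)=-1$. Exponentiating and clearing the common factor $t^{-1}$ against the partial-fraction expansion yields the identity of formal Laurent series
$$\sum_{k\geq0}\Bigl(\sum_{0\leq i\leq m}\frac{H_\lambda}{H_{\lambda^{i+}}}\,x_i^k\Bigr)t^{-k}=\exp\Bigl(\sum_{k\geq1}\frac{q_k(\lambda)}{k}t^{-k}\Bigr).$$
Now I would invoke the standard identity $\sum_{n\geq0}h_n u^n=\exp\bigl(\sum_{k\geq1}p_k u^k/k\bigr)$ with $p_k\mapsto q_k(\lambda)$ and $u\mapsto t^{-1}$, under which the right-hand side becomes $\sum_{n\geq0}\bigl(\sum_{\nu\vdash n}q_\nu(\lambda)/z_\nu\bigr)t^{-n}$, where $z_\nu=\prod_{i\geq1}i^{m_i}\,m_i!$ and $m_i$ is the number of parts of $\nu$ equal to $i$. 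Matching the coefficient of $t^{-k}$ then gives
$$\sum_{0\leq i\leq m}\frac{H_\lambda}{H_{\lambda^{i+}}}\,x_i^k=\sum_{\nu\vdash k}\frac{q_\nu(\lambda)}{z_\nu},$$
which is of the asserted form with $b_\nu=1/z_\nu\in\mathbb{Q}$, supported on the partitions with $|\nu|=k$, so in particular $|\nu|\leq k$.

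Every manipulation here takes place in the ring of formal Laurent series in $t^{-1}$, so no analytic convergence is involved. I expect the only genuinely delicate points to be the distinctness of the contents $x_0<x_1<\cdots<x_m$ (needed for the partial fraction step, and guaranteed by the interlacing $x_0<y_1<\cdots<y_m<x_m$) and the clean telescoping in the logarithmic expansion. The conceptual heart is the identity $F(t)=t^{-1}\exp\bigl(\sum_{k\geq1}q_k(\lambda)t^{-k}/k\bigr)$, which converts the problem into the exponential formula expressing $h_n$ in terms of power sums; once that is in place, the remaining steps are routine formal-series arithmetic.
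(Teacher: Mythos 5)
Your proposal is correct and takes essentially the same route as the paper: the paper establishes your partial-fraction identity $\sum_{0\leq i\leq m}\frac{H_\lambda}{H_{\lambda^{i+}}}\frac{1}{1-x_iz}=\frac{\prod_{1\leq j\leq m}(1-y_jz)}{\prod_{0\leq j\leq m}(1-x_jz)}$ (equivalent to yours under $z=1/t$) by showing a polynomial of degree at most $m$ vanishes at the $m+1$ points $1/x_t$ --- which is the same Lagrange-interpolation fact underlying your decomposition $Q(x_i)/P'(x_i)$ --- and then exponentiates the same logarithmic series $\sum_{k\geq 1}q_k(\lambda)z^k/k$. Your only addition is pinning down the coefficients explicitly as $b_\nu=1/z_\nu$ supported on partitions $\nu$ of $k$ via the classical relation between complete homogeneous symmetric functions and power sums, where the paper leaves them as unspecified rationals independent of $\lambda$.
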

\begin{proof}
Let
$$
g(z)=\prod_{1\leq j\leq m}(1-y_jz) -
\sum_{0\leq i\leq m}\frac{H_\lambda}{H_{\lambda^{i+}}}
\prod_{\substack{0\leq j\leq m\\ j\neq i}}(1-x_j z).
$$
Then by Lemma \ref{th:Hlambda+} we obtain
\begin{align*}
    g\bigl(\frac{1}{x_t}\bigr)&= \prod_{1\leq j\leq m}\bigl(1-\frac{y_j}{x_t}\bigr)-
\frac{H_\lambda}{H_{\lambda^{t+}}} \prod\limits_{\substack{0\leq
j\leq m\\ j\neq t}}\bigl(1-\frac{x_j}{x_t}\bigr)
\\
&=\prod_{1\leq j\leq m}\bigl(1-\frac{y_j}{x_t}\bigr)-\frac{\prod\limits_{\substack{1\leq j\leq
m}}(x_t-y_j)}{\prod\limits_{\substack{0\leq j\leq m\\ j\neq
t}}(x_t-x_j)} \cdot \prod\limits_{\substack{0\leq j\leq m\\ j\neq
t}}\bigl(1-\frac{x_j}{x_t}\bigr)\\
&= 0.
\end{align*}
This means that $g(z)$ has at least $m+1$ roots, so that $g(z)=0$ since $g(z)$ is a polynomial of $z$ with degree at most $m$. Therefore we obtain
$$
\sum_{0\leq i\leq m}\frac{H_\lambda}{H_{\lambda^{i+}}}\cdot \frac{1}{1-x_iz}
=\frac{\prod_{1\leq j\leq m}(1-y_jz)}{\prod_{0\leq j\leq
m}(1-x_j z)},
$$ 
which means that
\begin{align*}
\sum_{0\leq i\leq
m}\frac{H_\lambda}{H_{\lambda^{i+}}}\bigl(\sum_{k\geq0}(x_iz)^k\bigr)
&= \exp\bigl(\sum_{1\leq j\leq m}\ln(1-y_jz)- \sum_{0\leq i\leq
m}\ln(1-x_iz)\bigr)\\
&= \exp\bigl(\sum_{k\geq 1}\frac{q_k(\lambda)}{k}z^k\bigr).
\end{align*}
Comparing the coefficients of $z^k$ on both sides, we obtain
$$
\sum_{0\leq i\leq m}\frac{H_\lambda}{H_{\lambda^{i+}}}{x_i}^k=
\sum_{|\nu|\leq k}b_{\nu}q_\nu(\lambda)
$$ 
for some $b_{\nu}\in \mathbb{Q}$. Notice that $b_{\nu}$ are independent of $\lambda$. This achieves the proof.
\end{proof}

Now we will give a proof of Theorem \ref{th:dqnu}. 

\begin{proof}[Proof of Theorem \ref{th:dqnu}]
Let $k$ be an integer. By Lemma \ref{th:qk} we have
\begin{align*}
    H_\lambda D\bigl(\frac{q_k(\lambda)}{H_\lambda}\bigr)=
 \sum_{0\leq i\leq m}  \frac{H_\lambda}{H_{\lambda^{i+}}}
\sum_{1\leq j \leq{k/2}}2\binom{k}{2j}{x_{i}}^{k-2j}.
\end{align*}
Then there exist some
$b_{\delta}\in \mathbb{Q}$ such that
$$
D\bigl(\frac{q_k(\lambda)}{H_\lambda}\bigr)=
\sum_{|\delta|\leq
k-2}b_{\delta}\frac{q_\delta(\lambda)}{H_\lambda}
$$  
for every partition $\lambda$ by Lemma \ref{th:Hlambda+1}. In other words, \eqref{eq:Dnu} is true for $\nu=(k)$.

From \eqref{eq:g-g} with $g(z)=z^k$ we actually obtain
\begin{align*}
q_k(\lambda^{i+})-q_k(\lambda)= \sum_{1\leq j
\leq{k/2}}2\binom{k}{2j}{x_{i}}^{k-2j},
\end{align*}
which is a polynomial of $x_i$ with degree at most $k-2$. Then by Lemmas \ref{th:leibniz} and~\ref{th:Hlambda+1} there exist some $b_{\delta}\in \mathbb{Q}$ such that
$$
H_\lambda D(\frac{q_{\nu}(\lambda)}{H_\lambda})=
\sum_{|\delta|\leq
|\nu|-2}b_{\delta}q_\delta(\lambda)
$$ 
for every partition $\lambda.$
\end{proof}

\section{Hook lengths and $D$-polynomials} \label{sec:main1}  
In this section, we prove the main Theorems \ref{th:main1} and \ref{th:skewstanley}. 

Let $r$ be a fixed nonnegative integer. The key step is to show that $S(\lambda, r)$ defined in \eqref{def:S} can be written as a symmetric polynomial on $\{x_0,x_1,\ldots, x_m\}$ and $\{y_1,y_2,\ldots, y_m\}$, as stated next.

\begin{thm} \label{th:Ssym}
    There exist some rational numbers $b_{\nu}=b_\nu(r)$
indexed by integer partitions $\nu$
such that
\begin{equation}
S(\lambda,r)=\sum_{|\nu|\leq 2r+2}b_{\nu}q_\nu(\lambda)
\end{equation}
for every partition $\lambda$.
\end{thm}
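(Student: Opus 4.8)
The plan is to reduce the statement to a formula for power sums of hook lengths and then to translate hook lengths into the language of the corner contents $x_i,y_j$ via the boundary (Maya-diagram) encoding. Since $\prod_{1\le j\le r}(h^2-j^2)$ is a polynomial in $h^2$ of degree $r$ whose leading term is $h^{2r}$, the function $S(\lambda,r)$ is a fixed $\mathbb{Q}$-linear combination of the power sums $P_s(\lambda):=\sum_{\square\in\lambda}h_\square^{2s}$ for $0\le s\le r$. Hence it suffices to prove that each $P_s(\lambda)$ is a $\mathbb{Q}$-polynomial in the $q_k$ of weighted degree at most $2s+2$ (assigning weight $k$ to $q_k$); the worst case $s=r$ then yields the bound $|\nu|\le 2r+2$.

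First I would record the boundary description of the hook lengths. Encoding the boundary of $\lambda$ as a bi-infinite sequence of north steps (particles, occupying a set $P$ of positions) and east steps (holes, occupying the complementary set $Q$), one has the classical fact that the multiset of hook lengths equals $\{\,p-q : p\in P,\ q\in Q,\ p>q\,\}$. In these coordinates the inner and outer corners are exactly the places where a particle and a hole are adjacent, and the signed corner measure $\sigma:=\sum_{i}\delta_{x_i}-\sum_{j}\delta_{y_j}$ has moments $\sum_{c}c^k\sigma(c)=\sum_i x_i^k-\sum_j y_j^k=q_k(\lambda)$. This is consistent with \eqref{eq:q012}, since $q_0=1$, $q_1=0$ and $q_2=2|\lambda|$.

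The heart of the argument is a master identity: for every polynomial $f$ there is a polynomial $\Phi$ with $\deg\Phi=\deg f+2$ such that $\sum_{\square\in\lambda}f(h_\square)=\tfrac12\sum_{c,c'}\Phi(c-c')\,\sigma(c)\sigma(c')$, the double sum running over corner contents. I would prove this by writing $\sum_{\square}f(h_\square)=\sum_{p\in P,\,q\in Q,\,p>q}f(p-q)$, subtracting the (vanishing) empty-partition contribution so that the occupation numbers differ from those of $\emptyset$ only on a finite set, and then applying Abel summation twice, once in $p$ and once in $q$. Writing $P_0,Q_0$ for the occupation sets of $\emptyset$, the charge-neutrality $\sum_p(\chi_P-\chi_{P_0})(p)=0$ makes the first boundary terms vanish, and the second summation by parts converts the finitely supported occupation differences into the jumps of the profile, i.e.\ into the corner measure $\sigma$; choosing $\Phi$ to be the appropriate discrete second antiderivative of $f$ absorbs the diagonal and boundary corrections. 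As a check, taking $f(h)=1$ forces $\Phi(u)=u^2/2$ and gives $\tfrac12\sum_{c,c'}\tfrac{(c-c')^2}{2}\sigma(c)\sigma(c')=\tfrac14\bigl(q_2q_0-2q_1^2+q_0q_2\bigr)=|\lambda|$, exactly $P_0(\lambda)$.

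Granting the master identity, the conclusion is immediate: expanding $\Phi(c-c')=\sum_{a+b\le 2s+2}\gamma_{ab}\,c^{a}(c')^{b}$ and using $\sum_c c^a\sigma(c)=q_a$ yields $P_s(\lambda)=\tfrac12\sum_{a+b\le 2s+2}\gamma_{ab}\,q_a(\lambda)q_b(\lambda)$, a $\mathbb{Q}$-polynomial in the $q_k$ of weighted degree at most $2s+2$ with coefficients independent of $\lambda$. Combining the contributions of $P_0,\dots,P_r$ with the coefficients arising from the expansion of $\prod_{1\le j\le r}(h^2-j^2)$ produces the desired $b_\nu$ with $|\nu|\le 2r+2$. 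The main obstacle is precisely the master identity of the previous paragraph: one must regularize the divergent particle and hole sums correctly and verify that, after the two summations by parts, every leftover term is either a genuine constant or a single corner sum $\sum_c c^a\sigma(c)=q_a$, so that no quantity outside the algebra generated by the $q_k$ (such as the bare number $m$ of corners, which is not a combination of the $q_k$) can survive; the degree bookkeeping $\deg\Phi=\deg f+2$ must also be tracked carefully through this step.
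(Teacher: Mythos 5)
Your proposal is correct in outline and arrives at the same central identity as the paper, but by a genuinely different derivation. Your ``master identity'' $\sum_{\square\in\lambda}f(h_\square)=\tfrac12\sum_{c,c'}\Phi(c-c')\,\sigma(c)\sigma(c')$ is exactly what the paper proves: its formula $S(\lambda,r)=\sum_{0\leq j<i\leq m}\bigl(G(x_i-x_j)+G(y_i-y_{j+1})-G(x_i-y_{j+1})-G(y_i-x_j)\bigr)$, with $G$ even and $G(0)=0$, is precisely your quadratic form with kernel $\Phi=G$, the evenized discrete second antiderivative of $G_0$, and $\deg G=2r+2=\deg G_0+2$ matches your bookkeeping. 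The difference is how one gets there. The paper avoids your bi-infinite particle--hole sums entirely: it decomposes $\lambda$ into the rectangles $B_{ij}$ cut out by the corners, writes the hook multiset of each $B_{ij}$ as explicit consecutive runs, and obtains the four-term $F_2$-expression \eqref{eq:sumF} by purely finite telescoping; the evenization is then done by hand via Faulhaber polynomials and the Bernoulli reflection $P_n(-k-1)=(-1)^{n+1}P_n(k)$ (Lemma \ref{th:G012}), the linear correction cancelling because $(x_i-x_j)+(y_i-y_{j+1})-(x_i-y_{j+1})-(y_i-x_j)=0$. You instead propose the Olshanski-style route through the Maya encoding and a doubly regularized Abel summation against the vacuum --- the very route the paper alludes to when it remarks that these facts could be extracted from \cite{ols, ols2}. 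That step is the one place where your write-up remains a sketch: the vanishing of boundary terms via charge neutrality, and the check that nothing like the bare corner count $m$ survives, must be executed carefully (in the paper this is automatic, since diagonal terms carry $G(0)=0$ and only even powers $n^{2i}$ with $i\geq 1$ appear in \eqref{eq:Geven}); the rectangle decomposition is exactly a way of doing that bookkeeping with finite sums only, so what your approach buys in conceptual clarity it pays for in regularization work that you defer rather than carry out. Your remaining steps are sound and essentially coincide with the paper's: expanding $\Phi(c-c')$ in monomials and using $\sum_c c^a\sigma(c)=q_a$ is the same computation as the paper's comparison of $z^{2k}$-coefficients in $\bigl(\sum_i e^{x_iz}-\sum_j e^{y_jz}\bigr)\bigl(\sum_i e^{-x_iz}-\sum_j e^{-y_jz}\bigr)$, your preliminary reduction of $S(\lambda,r)$ to the power sums $\sum_\square h_\square^{2s}$ is a harmless reorganization of the paper's expansion $G_0(j)=\sum_w\eta_w j^{2w}$, and your $f=1$ sanity check (giving $\Phi(u)=u^2/2$ and $q_2/2=|\lambda|$) is correct.
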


\medskip

Keep the same notations as in Section~\ref{sec:Dpoly} (see Figure \ref{fig:2}). Let
$$
A_{ij}=
\{ (i',j')\in \lambda:
\alpha_{i+1}+1\leq i'\leq \alpha_{i},\beta_{j}+1\leq j'\leq
\beta_{j+1} \}
$$
so that
$$
\lambda=\bigcup_{0\leq j<i\leq m}A_{ij}.
$$
The multiset of hook lengths of $A_{ij}$ are
$$
\bigcup_{a=x_{i}-y_{j+1}}^{x_{i}-x_{j}-1} \{a,a-1,a-2,\ldots,
a-(x_{i}-y_{i}-1)  \}.
$$

Let $F_0(n)$ be a function defined on integers. Define
$$
F_1(n):=\sum_{k=1}^nF_0(k) \text{\qquad and\qquad}
F_2(n):=\sum_{k=1}^nF_1(k).
$$ 
Hence
\begin{align*}
\sum_{\square \in A_{ij}}F_0(h_{\square}) &=\sum_{a=x_{i}-y_{j+1}}^{
x_{i}-x_{j}-1}\sum_{b=0}^{x_{i}-y_{i}-1}F_0(a-b)
\\
&=\sum_{a=x_{i}-y_{j+1}}^{
x_{i}-x_{j}-1}\bigl(F_1(a)-F_1(a-x_{i}+y_{i})\bigr)
\\
&=\sum_{a=x_{i}-y_{j+1}}^{
x_{i}-x_{j}-1}F_1(a)-\sum_{a=x_{i}-y_{j+1}}^{
x_{i}-x_{j}-1}F_1(a-x_{i}+y_{i})
\\
&=F_2(x_{i}-x_{j}-1)+F_2(y_{i}-y_{j+1}-1)\\
&\qquad\qquad-F_2(x_{i}-y_{j+1}-1)-F_2(y_{i}-x_{j}-1)
\end{align*}

and thus

\begin{align}\sum_{\square \in
\lambda}F_0(h_{\square})
&=\sum_{0\leq j<i\leq m}\sum_{\square \in A_{ij}}F_0(h_{\square})\nonumber\\
&=\sum_{0\leq j<i\leq
m}\bigl(F_2(x_{i}-x_{j}-1)+F_2(y_{i}-y_{j+1}-1)\label{eq:sumF}\\
&\qquad\qquad -F_2(x_{i}-y_{j+1}-1)-F_2(y_{i}-x_{j}-1)\bigr).\nonumber
\end{align}

For each $n\geq 1$ the polynomial $P_n(z)$ of real number $z$ is defined by
$$
P_n(z):={z^{n+1}\over n+1}
+{z^n\over 2}
+{1\over n+1}\sum_{1\le j\le n/2} {n+1\choose 2j}
z^{n-2j+1} (-1)^{j+1} B_{2j},
$$
where $B_{2j}$ are Bernoulli numbers \cite{conguy, foatahan, knuth}. Let $k$ be a positive integer. According to Euler-MacLaurin formula \cite{knuth},
$$
P_n(k)=1^n+2^n+\cdots + k^n.
$$
Consequently, $P_n(k)=P_n(k+1)-(k+1)^n$. It is easy to obtain the following identity:
\begin{equation}\label{eq:Pneg}
    P_n(-k-1)=(-1)^{n+1} P_n(k). \qquad (n\geq 1)
\end{equation}
For simplicity we rewrite
\begin{equation}\label{eq:Pnocoeff}
P_n(z)=
{z^n\over 2}
+\sum_{0\le j\le n/2} \zeta_j(n) z^{n-2j+1}.
\end{equation}

Let $G_0(j)=\prod_{1\leq i\leq r}(j^2-i^2)=\sum_{w=0}^r \eta_w j^{2w}.$ We define
$$
G_1(n):=\sum_{k=1}^nG_0(k) \text{\qquad and \qquad} G_2(n):=\sum_{k=1}^nG_1(k).
$$
The polynomial $G(z)$ of real number $z$ is defined by
\begin{equation}\label{def:G}
G(z):= (-1)^r  \frac {z^2r!^2}2 +
\sum_{w=1}^r \eta_w \Bigl(\frac{P_{2w}(z-1)}2 + \sum_{j=0}^w \zeta_j(2w) P_{2w-2j+1}(z-1)\Bigr).
\end{equation}

\begin{lem} \label{th:G012}
The function $G(z)$ defined in \eqref{def:G} satisfies the following relations:
\begin{align}
G(0)&=0,\\
    G(n)&=(-1)^r \frac{nr!^2}2 + G_2(n-1), \label{eq:Gn} \qquad (n\in \mathbb{N})\\
    G(n)&=G(-n). \qquad (n\in \mathbb{N})
\end{align}
\end{lem}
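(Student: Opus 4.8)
The plan is to reduce all three relations to a single closed-form evaluation of the inner $P$-combination appearing in \eqref{def:G}. For each $w\ge 1$ I would write
$$
T_w(z):=\frac{P_{2w}(z-1)}2+\sum_{j=0}^w\zeta_j(2w)\,P_{2w-2j+1}(z-1),
$$
so that $G(z)=(-1)^r\frac{z^2r!^2}2+\sum_{w=1}^r\eta_w T_w(z)$. First I would observe, using the expansion \eqref{eq:Pnocoeff} of $P_{2w}$ together with the defining property $P_m(k)=1^m+\cdots+k^m$, that $T_w(k)=\sum_{l=1}^{k-1}P_{2w}(l)$ for every positive integer $k$: substituting $P_{2w}(l)=\frac{l^{2w}}2+\sum_j\zeta_j(2w)l^{2w-2j+1}$ and summing over $l$ turns each power $l^m$ into the value $P_m(k-1)$, reproducing the definition of $T_w$. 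Interchanging the two summations then yields the closed form
$$
T_w(z)=z\,P_{2w}(z-1)-P_{2w+1}(z-1),
$$
first as an identity of integer values and hence, both sides being polynomials, as an identity of polynomials in $z$. This closed form is the engine for everything that follows, and establishing it cleanly is the main (though routine) obstacle.

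Given it, $G(0)=0$ is immediate: the leading term vanishes at $z=0$, and $T_w(0)=0\cdot P_{2w}(-1)-P_{2w+1}(-1)=0$ because $P_m(-1)=P_m(0)-0^m=0$ for every $m\ge 1$. For \eqref{eq:Gn} I would expand $G_2(n-1)$ directly. Writing $G_0(l)=\sum_{w=0}^r\eta_w l^{2w}$ with $\eta_0=(-1)^r r!^2$, one gets $G_1(k)=\eta_0 k+\sum_{w\ge 1}\eta_w P_{2w}(k)$, hence
$$
G_2(n-1)=\eta_0\frac{(n-1)n}2+\sum_{w=1}^r\eta_w\sum_{k=1}^{n-1}P_{2w}(k)=\eta_0\frac{(n-1)n}2+\sum_{w=1}^r\eta_w T_w(n).
$$
Subtracting this from $G(n)=(-1)^r\frac{n^2 r!^2}2+\sum_w\eta_w T_w(n)$, the $T_w$ sums cancel and the remaining term is $(-1)^r r!^2\bigl(\frac{n^2}2-\frac{(n-1)n}2\bigr)=(-1)^r\frac{n\,r!^2}2$, which is exactly \eqref{eq:Gn}.

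Finally, for the symmetry I would prove the stronger statement $G(-z)=G(z)$ as polynomials, which specializes to $G(n)=G(-n)$. The leading term $(-1)^r\frac{z^2r!^2}2$ is even, so it suffices to show each $T_w$ is even. Applying the closed form and the reflection \eqref{eq:Pneg} in its polynomial form $P_m(-z-1)=(-1)^{m+1}P_m(z)$ gives $T_w(-z)=zP_{2w}(z)-P_{2w+1}(z)$, whence
$$
T_w(-z)-T_w(z)=z\bigl(P_{2w}(z)-P_{2w}(z-1)\bigr)-\bigl(P_{2w+1}(z)-P_{2w+1}(z-1)\bigr)=z\cdot z^{2w}-z^{2w+1}=0,
$$
using the telescoping relation $P_m(z)-P_m(z-1)=z^m$. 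Thus each $T_w$ is symmetric, $G(-z)=G(z)$, and all three relations follow. The only delicate point here is tracking the signs $(-1)^{2w+1}=-1$ and $(-1)^{2w+2}=+1$ produced by the reflection step; everything else is bookkeeping built on the closed form for $T_w$.
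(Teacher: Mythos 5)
Your proof is correct, and its middle step coincides with the paper's: you verify \eqref{eq:Gn} by expanding $G_2(n-1)$ through $G_1(k)=\eta_0 k+\sum_{w\geq 1}\eta_w P_{2w}(k)$ with $\eta_0=(-1)^r r!^2$, and recognizing that the resulting sums reproduce the inner bracket of \eqref{def:G} evaluated at $n$ --- exactly the computation in the paper. Where you diverge is in packaging the other two relations. The paper never compresses the inner combination: it gets $G(0)=0$ from $P_n(0)=0$ and $P_n(-1)=0$ (the latter via \eqref{eq:Pneg}), and proves $G(n)=G(-n)$ by applying \eqref{eq:Pneg} termwise to every $P_m$ appearing in \eqref{def:G}, then collapsing $G(n)-G(-n)$ to $\sum_w \eta_w\bigl(P_{2w}(n)-\tfrac{n^{2w}}{2}-\sum_j\zeta_j(2w)n^{2w-2j+1}\bigr)=0$ using $P_m(n-1)=P_m(n)-n^m$ together with \eqref{eq:Pnocoeff}. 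You instead first establish the closed form $T_w(z)=zP_{2w}(z-1)-P_{2w+1}(z-1)$ by interchanging the double sum $\sum_{l\leq k-1}\sum_{m\leq l}m^{2w}$, after which $T_w(0)=0$ and the evenness $T_w(-z)=T_w(z)$ each follow from \eqref{eq:Pneg} applied to only two polynomials plus the telescoping relation $P_m(z)-P_m(z-1)=z^m$; your sign bookkeeping $(-1)^{2w+1}=-1$, $(-1)^{2w+2}=+1$ is right. Your route buys a symmetry argument free of any manipulation of the $\zeta_j(2w)$ coefficients, at the cost of one extra (easy) lemma, and it yields the slightly stronger polynomial identity $G(-z)=G(z)$ explicitly rather than only at integers; the paper's route is more direct, using nothing beyond the two displayed properties of $P_n$. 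Both of your extensions from integer identities to polynomial identities (for the closed form of $T_w$ and for the polynomial form of \eqref{eq:Pneg}) are legitimate since polynomials agreeing at infinitely many points coincide, as you note.
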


\begin{proof}
It's obvious that $P_n(0)=0$ and thus $P_n(-1)=0$ by \eqref{eq:Pneg}. So that $G(0)=0$ follows from \eqref{def:G}. By definitions of $G_0, G_1$ and $G_2$ we have
\begin{align*}
    G_2(n-1) &= \sum_{k=1}^{n-1} \sum_{j=1}^k \sum_{w=0}^r \eta_w j^{2w}\\
    &= \sum_{k=1}^{n-1} \sum_{j=1}^k  \eta_0 +\sum_{w=1}^r \eta_w \sum_{k=1}^{n-1} P_{2w}(k) \\
    &= \eta_0\binom n2 + \sum_{w=1}^r \eta_w \sum_{k=1}^{n-1} \Bigl(\frac {k^{2w}}2
        + \sum_{j=0}^w \zeta_j(2w) k^{2w-2j+1}\Bigr) \\
            &=  (-1)^r r!^2\binom n2+\sum_{w=1}^r \eta_w
            \Bigl(\frac {P_{2w}(n-1)}2 + \sum_{j=0}^w \zeta_j(2w) P_{2w-2j+1}(n-1)\Bigr). \\
\end{align*}
Hence \eqref{eq:Gn} is true. By \eqref{eq:Pneg},
\begin{align*}
    G(n)-G(-n)
    &=
\sum_{w=1}^r \eta_w \Bigl(\frac{P_{2w}(n-1)}2 + \sum_{j=0}^w \zeta_j(2w) P_{2w-2j+1}(n-1)\Bigr)\\
&\qquad -
\sum_{w=1}^r \eta_w \Bigl(-\frac{P_{2w}(n)}2 + \sum_{j=0}^w \zeta_j(2w) P_{2w-2j+1}(n)\Bigr)\\
&=
\sum_{w=1}^r \eta_w \Bigl(P_{2w}(n)-\frac{n^{2w}}2 - \sum_{j=0}^w \zeta_j(2w) n^{2w-2j+1}\Bigr)\\
&=0.\qedhere
\end{align*}

\end{proof}

The above lemma implies that $G(n)$ is an even polynomial of the integer $n$ with degree $2r+2$, which means that there exist some rational numbers $\xi_i$ such that
\begin{equation}\label{eq:Geven}
G(n)=\sum_{i=1}^{r+1} \xi_i n^{2i}.
\end{equation}

\begin{proof}[Proof of Theorem \ref{th:Ssym}]
By \eqref{eq:sumF} we obtain
\begin{align*}
    S(\lambda,r)&=\sum_{\square\in \lambda}G_0(h_\square)\\
&= \sum_{0\leq j<i\leq
m}\bigl(G_2(x_{i}-x_{j}-1)+G_2(y_{i}-y_{j+1}-1)\\
&\qquad\qquad -G_2(x_{i}-y_{j+1}-1)-G_2(y_{i}-x_{j}-1)\bigr)\\
&= \sum_{0\leq j<i\leq
m}\bigl(G(x_{i}-x_{j})+G(y_{i}-y_{j+1})-G(x_{i}-y_{j+1})-G(y_{i}-x_{j})\bigr).
 \end{align*}
The last equality is due to \eqref{eq:Gn} and
$$
(x_{i}-x_{j})+(y_{i}-y_{j+1})-(x_{i}-y_{j+1})-(y_{i}-x_{j})=0.
$$
Thus by \eqref{eq:Geven}, we have
\begin{align*}
S(\lambda,r)&= \sum_{1\leq k\leq r+1}\xi_k\sum_{0\leq j<i\leq
m}\bigl((x_i-x_j)^{2k}+(y_{i}-y_{j+1})^{2k}\\
&\qquad\qquad -(x_i-y_{j+1})^{2k}-(y_{i}-x_j)^{2k}\bigr)\\
&= \sum_{1\leq k\leq
r+1}\xi_k V(k),
 \end{align*}
 where
 \begin{align*}
V(k)&= \sum_{0\leq i\leq j\leq m}(x_i-x_j)^{2k}+\sum_{1\leq i\leq
j\leq m}(y_{i}-y_j)^{2k}
 -\sum_{0\leq i\leq m}\sum_{1\leq j\leq m}(x_i-y_j)^{2k}.
 \end{align*}
Notice that $\xi_k$ is independent of $\lambda$ since  $G(n)$ is independent of $\lambda$. Comparing the coefficients of $z^{2k}$ $(1\leq k \leq r+1)$ on both sides of the following trivial identity
\begin{align*}
    &\Bigl(\sum_{i=0}^m e^{x_iz}-\sum_{j=1}^m e^{y_jz}\Bigr)
    \Bigl(\sum_{i=0}^m e^{-x_iz}-\sum_{j=1}^m e^{-y_jz}\Bigr) \\
& =\sum_{i=0}^m\sum_{j=0}^m e^{(x_i-x_j)z}+
\sum_{i=1}^m\sum_{j=1}^m e^{(y_{i}-y_j)z}
 -\sum_{i=0}^m\sum_{j=1}^m e^{(x_i-y_j)z}
-\sum_{i=0}^m\sum_{j=1}^m e^{(y_j-x_i)z},
\end{align*}
we obtain there exist some rational numbers $b_\nu'$ such that
\begin{equation}
    V(k)=\sum_{|\nu|\leq 2k}b_{\nu}' q_\nu(\lambda)
\end{equation}
for every partition $\lambda$. This achieves the proof.
\end{proof}

For each partition $\nu=(\nu_1,\nu_2,\cdots,\nu_\ell)$ we define
$$
S_\nu(\lambda):=\prod_{1\leq i\leq \ell}S(\lambda,\nu_i).
$$
Combining Theorems \ref{th:Ssym}  and \ref{th:dqnu} we derive the following result.

\begin{thm} \label{th:SDpoly}
Let $\nu=(\nu_1,\nu_2,\cdots,\nu_\ell)$ be a given partition. Then $S_\nu(\lambda)$ is a $D$-polynomial with degree at most $|\nu|+\ell$. Furthermore, there exist some $b_{\delta}\in \mathbb{Q}$ indexed by partitions $\delta$ such that
\begin{equation}\label{eq:DkS}
D^k\Bigl(\frac{S_\nu(\lambda)}{H_\lambda}\Bigr)=\sum_{|\delta|\leq
2|\nu|+2\ell-2k}b_{\delta}\frac{q_\delta(\lambda)}{H_\lambda}
\end{equation}
for every partition $\lambda.$
\end{thm}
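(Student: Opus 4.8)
The plan is to first expand $S_\nu(\lambda)$ as a $\mathbb{Q}$-linear combination of the functions $q_\delta(\lambda)$, and then to apply the single-step formula \eqref{eq:Dnu} repeatedly, exploiting that each application of $D$ lowers the bound on $|\delta|$ by exactly two.

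First I would establish the expansion. By Theorem \ref{th:Ssym}, each factor satisfies $S(\lambda,\nu_i)=\sum_{|\mu|\leq 2\nu_i+2}b_\mu q_\mu(\lambda)$ with $\lambda$-independent rationals $b_\mu$. Since $q_\mu(\lambda)=\prod_j q_{\mu_j}(\lambda)$ is multiplicative in its index, one has $q_\mu\,q_{\mu'}=q_{\mu\cup\mu'}$, where $\mu\cup\mu'$ denotes the partition obtained by merging the parts. Expanding $S_\nu(\lambda)=\prod_{1\leq i\leq \ell}S(\lambda,\nu_i)$ then yields $S_\nu(\lambda)=\sum_{|\delta|\leq 2|\nu|+2\ell}b_\delta q_\delta(\lambda)$ for some rationals $b_\delta$, because every monomial $q_{\mu^{(1)}}\cdots q_{\mu^{(\ell)}}$ collapses to a single $q_\delta$ with $|\delta|=\sum_i|\mu^{(i)}|\leq\sum_i(2\nu_i+2)=2|\nu|+2\ell$.

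Next I would prove \eqref{eq:DkS} by induction on $k$, the case $k=0$ being the expansion just obtained. The essential observation is that $D$ maps the $\mathbb{Q}$-span of $\{q_\delta(\lambda)/H_\lambda:|\delta|\leq N\}$ into the span of $\{q_\delta(\lambda)/H_\lambda:|\delta|\leq N-2\}$: indeed, by linearity of $D$ and \eqref{eq:Dnu} of Theorem \ref{th:dqnu}, $D\bigl(q_\delta/H_\lambda\bigr)=\sum_{|\epsilon|\leq|\delta|-2}c_\epsilon\,q_\epsilon/H_\lambda$ with universal rationals $c_\epsilon$. Applying $D$ to the inductive hypothesis at level $k$, whose terms obey $|\delta|\leq 2|\nu|+2\ell-2k$, produces terms with $|\epsilon|\leq 2|\nu|+2\ell-2(k+1)$, which is precisely \eqref{eq:DkS} at level $k+1$.

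Finally, the degree bound is read off from \eqref{eq:DkS} at $k=|\nu|+\ell$: there the index bound becomes $|\delta|\leq 0$, so only $\delta=\emptyset$ survives, and using $q_\emptyset(\lambda)=1$ one gets $D^{|\nu|+\ell}\bigl(S_\nu/H_\lambda\bigr)=b_\emptyset/H_\lambda$ for a constant $b_\emptyset$. One further application of $D$ annihilates this term by Lemma \ref{th:D+}, so $D^{|\nu|+\ell+1}\bigl(S_\nu/H_\lambda\bigr)=0$, i.e.\ $S_\nu(\lambda)$ is a $D$-polynomial of degree at most $|\nu|+\ell$. I do not expect a genuine obstacle, since Theorems \ref{th:Ssym} and \ref{th:dqnu} supply exactly the two ingredients—the $q$-expansion and the degree-lowering action of $D$—and the only points demanding care are the bookkeeping of the factor $2$ relating $|\delta|$ to the $D$-polynomial degree, and the observation that all coefficients produced along the way remain independent of $\lambda$.
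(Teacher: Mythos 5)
Your proposal is correct and follows exactly the route the paper intends: the paper derives Theorem \ref{th:SDpoly} by combining Theorems \ref{th:Ssym} and \ref{th:dqnu} (stating it without further detail), and your argument simply fills in that combination --- the $q_\delta$-expansion via multiplicativity of $q_\nu$, the inductive degree-lowering by $D$ via \eqref{eq:Dnu}, and the final annihilation by Lemma \ref{th:D+}. All the bookkeeping (the bound $|\delta|\leq 2|\nu|+2\ell-2k$ and the $\lambda$-independence of the coefficients) is handled correctly.
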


Now we are ready to prove Theorems \ref{th:main1} and \ref{th:skewstanley}.
\begin{proof}[Proof of Theorem \ref{th:main1}]
Notice that $p_\nu(h_{\square}^2: {\square}\in\lambda)$ can be written as a linear combination of some $S_\nu(\lambda)$.  Then by Theorem \ref{th:SDpoly} we obtain Theorem \ref{th:main1}.
\end{proof}

\begin{proof}[Proof of Theorem \ref{th:skewstanley}]
It is easy to see that for any symmetric function $F(z_1, z_2, \ldots)$ of infinite variables, $F(h_{\square}^2:\square\in \lambda)$ can be written as a linear combination of some $p_\nu(h_{\square}^2: {\square}\in\lambda)$.  Then by Theorems \ref{th:main1}, \ref{th:main2} and \ref{th:SDpoly} we derive Theorem \ref{th:skewstanley}.
\end{proof}

\section{Okada-Panova hook length formula} \label{sec:okadapanova} 

Okada's conjecture on hook lengths \eqref{eq:okadapanova} was first proved by Panova \cite{panova} by means of Theorem \ref{th:hanstanley}. In this section, we generalize and give another proof of the Okada-Panova hook length formula by using difference operators. In fact, the constants $K_r$ arise directly from the computation for a single partition $\lambda$, without the summation ranging over all partitions of size $n$.

\begin{proof}[Proof of Corollary \ref{th:DS}]
By \eqref{eq:q012} and Theorem \ref{th:SDpoly} there exist $a,b\in \mathbb{Q}$ such that for every $\lambda,$
$$
H_\lambda D^{r}\Bigl(\frac{S(\lambda,r)}{H_\lambda}\Bigr)=a|\lambda|+b.
$$
The explicit values of $a$ and $b$ are determined by taking two special partitions $\lambda=\emptyset$ and $\lambda=(1)$. Since $S(\lambda,r)=0$ if $\lambda$ does not have any hook length greater than $r$, we have
\begin{equation*}
b=D^{r}\Bigl(\frac{S(\lambda,r)}{H_\lambda}\Bigr)\Big|_{\lambda=\emptyset}=0
\end{equation*}
by \eqref{eq:main2b}. On the other hand, it's obvious that the only partitions of size $r+1$ who have hook lengths greater than $r$ are $\{ \lambda^{(k)}:0\leq k\leq r\}$ where
$$
\lambda^{(k)}=(k+1,\underbrace{1,1,\cdots,1}_{r-k}).
$$
Then
$$
f_{\lambda^{(k)}}=\binom rk
\text{\qquad and\qquad}
S(\lambda^{(k)},r)=\prod_{1\leq i\leq r}\bigl((r+1)^2-i^2\bigr).
$$
By \eqref{eq:main2b} we have 
\begin{equation*}
    a=D^{r}\Bigl(\frac{S(\lambda,r)}{H_\lambda}\Bigr)\Big|_{\lambda=(1)}
= \sum_{| \lambda|=r+1}
f_{\lambda}\frac{S(\lambda,r)}{H_\lambda}
= \sum_{0\leq k\leq
r}f_{\lambda^{(k)}}\frac{S(\lambda^{(k)},r)}{H_{\lambda^{(k)}}},
\end{equation*}
so that
\begin{equation*}
a= \frac{(2r+1)!}
 {r!(r+1)^2}\sum_{0\leq k\leq r}{\binom{r}{k}}^2
 = \frac{(2r+1)!}
 {r!(r+1)^2}\binom{2r}{r}
 =K_r.
\end{equation*}
Hence \eqref{eq:DS0} is true. Consequently, \eqref{eq:DS1} and \eqref{eq:DS2} are derived from \eqref{eq:DS0} by applying the difference operator $D$.
\end{proof}

\begin{proof}[Proof of Theorem \ref{th:okadapanova}]
Since $S(\lambda,r)=0$ if $\lambda$ does not have any hook length greater than $r$, we have
\begin{equation}\label{eq:S0ir}
D^{i}\Bigl(\frac{S(\lambda,r)}{H_\lambda}\Bigr)\Big|_{\lambda=\emptyset}=0
\end{equation}
for $0\leq i\leq r$  by \eqref{eq:main2b}. Substituting $g(\lambda)$ by $S(\lambda,r)/H_\lambda$ and $\mu$ by $\emptyset$ in \eqref{eq:main2} we get
\begin{equation*}
    \sum_{|\lambda|=n}f_{\lambda} \frac{ S(\lambda,r)}{H_\lambda}
=\sum_{k=0}^n\binom{n}{k} D^{k}\Bigl(\frac{S(\mu,r)}{H(\mu)}\Bigr)\Big|_{\mu=\emptyset}
=K_r \binom{n}{r+1}
\end{equation*}
by \eqref{eq:S0ir}, \eqref{eq:DS1} and \eqref{eq:DS2}.
\end{proof}

\section{Fujii-Kanno-Moriyama-Okada content formula} \label{sec:content} 
In this section, we prove and generalize the Fujii-Kanno-Moriyama-Okada content formula. 
Recall $C(\lambda,r)=\sum_{\square\in \lambda
}\prod_{i=0}^{r-1}(c_{\square}^2 - i^2)$.

\begin{thm} \label{th:Csym}
There exist some $b_{\nu}\in \mathbb{Q}$ indexed by partitions $\nu$ such that
$$
    H_\lambda D\Bigl(\frac{C(\lambda,r)}{H_\lambda}\Bigr) = \sum_{|\nu|\leq
2r}b_{\nu}q_\nu(\lambda)
$$ 
for every partition $\lambda$.
\end{thm}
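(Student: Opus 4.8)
The plan is to compute $H_\lambda D(C(\lambda,r)/H_\lambda)$ directly from the corner structure of $\lambda$ introduced in Section~\ref{sec:Dpoly}, and then to invoke Lemma~\ref{th:Hlambda+1} to rewrite the result in terms of the functions $q_\nu$. The essential simplification, compared with the hook-length case treated in Theorem~\ref{th:dqnu}, is that the content of a box depends only on its position and not on the ambient partition: adding a box to $\lambda$ leaves the contents of all existing boxes unchanged.

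First I would apply Lemma~\ref{th:leibniz1} to write
$$D\Bigl(\frac{C(\lambda,r)}{H_\lambda}\Bigr) = \sum_{\lambda^+}\frac{C(\lambda^+,r)-C(\lambda,r)}{H_{\lambda^+}}.$$
The partitions $\lambda^+$ obtained by adding a box are precisely the $\lambda^{i+}=\lambda\cup\{\square_i\}$ for $0\le i\le m$, with $\square_i=(\alpha_{i+1}+1,\beta_i+1)$. Since the content of $\square_i$ equals $(\beta_i+1)-(\alpha_{i+1}+1)=\beta_i-\alpha_{i+1}=x_i$, while the contents of all other boxes do not change, the single-box difference is explicit:
$$C(\lambda^{i+},r)-C(\lambda,r)=\prod_{t=0}^{r-1}(x_i^2-t^2).$$
Consequently,
$$H_\lambda D\Bigl(\frac{C(\lambda,r)}{H_\lambda}\Bigr) = \sum_{0\le i\le m}\frac{H_\lambda}{H_{\lambda^{i+}}}\prod_{t=0}^{r-1}(x_i^2-t^2).$$

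To finish, I would note that $\prod_{t=0}^{r-1}(x_i^2-t^2)$ is an even polynomial in $x_i$ of degree $2r$, so it is a rational linear combination of the monomials $x_i^{2w}$ with $0\le w\le r$. Applying Lemma~\ref{th:Hlambda+1} to each sum $\sum_{0\le i\le m}(H_\lambda/H_{\lambda^{i+}})\,x_i^{2w}$ expresses it, for $2w\le 2r$, as a rational combination of the $q_\nu(\lambda)$ with $|\nu|\le 2w\le 2r$ and with coefficients independent of $\lambda$. Collecting these contributions yields $H_\lambda D(C(\lambda,r)/H_\lambda)=\sum_{|\nu|\le 2r}b_\nu q_\nu(\lambda)$ for suitable $b_\nu\in\mathbb{Q}$. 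I do not expect any genuine obstacle here: the content-invariance under the addition of a box makes the per-corner difference completely explicit, so this argument is a direct and cleaner analogue of the proof of Theorem~\ref{th:dqnu}, free of the hook-length bookkeeping that was needed in Lemma~\ref{th:qk}.
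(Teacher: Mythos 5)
Your proposal is correct and follows essentially the same route as the paper: both exploit the fact that adding the corner box $\square_i$ leaves all existing contents unchanged and contributes content $x_i$, compute the per-corner difference explicitly, and then invoke Lemma~\ref{th:Hlambda+1} together with linearity to convert the sums $\sum_{0\le i\le m}(H_\lambda/H_{\lambda^{i+}})\,x_i^{2w}$ into combinations of the $q_\nu(\lambda)$. The only cosmetic difference is that the paper applies linearity first (treating each $\sum_{\square}c_{\square}^{2r}$ separately), whereas you keep $C(\lambda,r)$ intact and expand $\prod_{t=0}^{r-1}(x_i^2-t^2)$ into monomials afterwards.
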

\begin{proof}
We have
$$
\sum_{\square\in \lambda^{i+}}
c_{\square}^{2r}-\sum_{\square\in \lambda}
c_{\square}^{2r}=(\beta_i-\alpha_{i+1})^{2r}=x_i^{2r}.
$$
Therefore
\begin{align*} H_\lambda D\Bigl(\frac{\sum_{\square\in
\lambda} c_{\square}^{2r}}{H_\lambda}\Bigr)
=
            \sum_{\lambda^{i+}}\frac{H_\lambda}{H_{\lambda^{i+}}}
            \Bigl(\sum_{\square\in
\lambda^{i+}} c_{\square}^{2r}-\sum_{\square\in \lambda}
c_{\square}^{2r}\Bigr)
=\sum_{\lambda^{i+}}
\frac{H_\lambda}{H_{\lambda^{i+}}}x_i^{2r}.
\end{align*}
The proof is achieved by Lemma \ref{th:Hlambda+1} and linearity.
\end{proof}

\begin{proof}[Proof of Theorem \ref{th:DC}]
    By \eqref{eq:q012},  Theorems \ref{th:Csym} and  \ref{th:dqnu} there exist
$a,b\in \mathbb{Q}$ such that for every $\lambda,$
$$
H_\lambda D^{r}\Bigl(\frac{C(\lambda,r)}{H_\lambda}\Bigr)=a|\lambda|+b.
$$
The explicit values of $a$ and $b$ are determined by taking two special partitions $\lambda=\emptyset$ and $\lambda=(1)$. Since $C(\lambda,r)=0$ if $\lambda$ does not have any content whose absolute value is greater than $r-1$, we have
\begin{equation*}
b=D^{r}\Bigl(\frac{C(\lambda,r)}{H_\lambda}\Bigr)\Big|_{\lambda=\emptyset}=0
\end{equation*}
by \eqref{eq:main2b}. On the other hand, it's obvious that the only partitions of size $r+1$ who have contents with absolute values greater than $r-1$ are $(1^{r+1})$ and $(r+1)$. By \eqref{eq:main2b} we have
\begin{equation*}
    a=D^{r}\Bigl(\frac{C(\lambda,r)}{H_\lambda}\Bigr)\Big|_{\lambda=(1)}
= \sum_{| \lambda|=r+1}
f_{\lambda}\frac{C(\lambda,r)}{H_\lambda}
= \frac{(2r)!}{(r+1)!}.
\end{equation*}
Hence \eqref{eq:DC0} is true. Consequently, \eqref{eq:DC1} and \eqref{eq:DC2} are derived from \eqref{eq:DC0} by applying the difference operator $D$.
\end{proof}

\begin{proof}[Proof of Theorem \ref{th:content}]
Since $C(\lambda,r)=0$ if $\lambda$ does not have any content whose absolute value is greater than $r-1$, we have
\begin{equation}\label{eq:C0ir}
D^{i}\Bigl(\frac{C(\lambda,r)}{H_\lambda}\Bigr)\Big|_{\lambda=\emptyset}=0
\end{equation}
for $0\leq i\leq r$  by \eqref{eq:main2b}. Substituting $g(\lambda)$ by $C(\lambda,r)/H_\lambda$ and $\mu$ by $\emptyset$ in \eqref{eq:main2} we get
\begin{equation*}
    \sum_{|\lambda|=n}f_{\lambda} \frac{ C(\lambda,r)}{H_\lambda}
=\sum_{k=0}^n\binom{n}{k} D^{k}\Bigl(\frac{C(\mu,r)}{H(\mu)}\Bigr)\Big|_{\mu=\emptyset}
=\binom{(2r)!}{(r+1)!} \binom{n}{r+1}
\end{equation*}
by \eqref{eq:C0ir}, \eqref{eq:DC1} and \eqref{eq:DC2}.
\end{proof}

Theorem \ref{th:skewmarkedcontent} is a simple consequence of Theorems \ref{th:main2} and \ref{th:DC}.

\goodbreak

\bigskip
{\bf Acknowledgements}. 
The authors really appreciate the valuable suggestions given by referees for improving the overall quality of the manuscript.  The second author would like to thank Prof. P.~O.~Dehaye for the helpful guidance.

\medskip
{\bf Fundings}. 
The second author is supported  by Grant [P2ZHP2\_171879] of the Swiss National Science Foundation and the Post-doctoral Fellowship from LABEX of the University of Strasbourg.



\end{document}